\documentclass[reqno,a4paper,11pt]{article}
\usepackage{CJK,amsmath,amsthm,dsfont,amsfonts,amssymb,fancyhdr}
\usepackage{enumerate}
\usepackage{bbm,color,soul,supertabular,longtable,verbatim,extarrows}
\usepackage{titlesec}
\usepackage{graphicx}
\usepackage{cite}
\usepackage{graphicx,multirow,bm,rotating}
\usepackage{appendix}
\usepackage{booktabs,multirow,makecell}
\usepackage{tikz}
\usepackage{verbatim}
\usepackage[noblocks]{authblk}
\usepackage{ulem}
\usepackage{cancel}
\usepackage{subcaption}
\usepackage{booktabs,multirow,makecell}
\usepackage[top=2.4cm,bottom=2.2cm,left=2.6cm,right=2cm]{geometry}
\usepackage{hyperref}
\usepackage{subcaption}
\usepackage{amsthm}
\usepackage{amsmath}
\usepackage{amsmath}
\allowdisplaybreaks[1]  

\newtheorem{claim}{Claim}[section]
\newtheorem{conjecture}{Conjecture}

\newtheorem{theorem}{Theorem}
\theoremstyle{definition}

\theoremstyle{plain}
\newtheorem{lemma}[theorem]{Lemma}

\newtheoremstyle{noparens}%
  {}{}%
  {\itshape}{}%
  {\bfseries}{.}%
  { }%
  {\thmname{#1}\thmnumber{ #2}\mdseries\thmnote{ #3}}

\theoremstyle{noparens}
\newtheorem{lemmaNoParens}[theorem]{Lemma}
\newtheorem{theoremNoParens}[theorem]{Theorem}
\newtheorem{conjectureNoParens}[conjecture]{Conjecture}

\title{\bf The maximum number of maximal dissociation sets in trees\thanks{This work was supported by Beijing Natural Science Foundation (Grant No. 1252010).}}

\author{Meiqin Wang,  \quad Min Xu\thanks{\hangindent=1.5em Corresponding author. \newline {\em E-mail address:} xum@bnu.edu.cn (M. Xu).}, \quad Ning Zhang 

}

\date{}

\begin{document}
\maketitle
\vspace{-2.5em}
\begin{center}
    \emph{School of Mathematical Sciences, Beijing Normal University,} \\
    \emph{Key Laboratory of Mathematics and Complex Systems, Ministry of Education,} \\
    \emph{Beijing, 100875, China}
\end{center}

\renewcommand\abstractname{Abstract}
\begin{abstract}
Let $G$ be a simple graph. A dissociation set of $G$ proposed by Yannakakis in $1981$ is defined as a set of vertices that induces a subgraph in which every vertex has a degree of at most $1$. A dissociation set is maximal if it is not contained as a proper subset in any other dissociation set.
In $2025$, Wang et al.\cite{ZiyuanWang} established that for any tree $T$ of order $n\geq 4$, the number of maximal dissociation sets in $T$ is at most $3^{\frac{n-1}{3}}+\frac{n-1}{3}$ and characterized the extremal trees attaining the upper bound. They also proposed a conjecture about the upper bound of the maximal dissociation set. In this paper, we consider this conjecture and show that the maximum number of maximal dissociation sets in a tree of order $n(n\geq 3)$ is $g(n)$, where
\[
g(n) =
\begin{cases}
n,  &  n=3,4,5,6,\\
3^{\frac{n-1}{3}}+\frac{n-1}{3}, &  n \equiv 1 \pmod{3},~n\geq7,\\
4\cdot 3^{\frac{n-5}{3}}+n-5, &  n \equiv 2 \pmod{3},~n\geq8, \\
16\cdot 3^{\frac{n-9}{3}}+3n-25, &  n \equiv 0 \pmod{3},~n\geq12~\text{and }~n\neq21, \\
19, &  n=9, \\
1349, & n=21.
\end{cases}
\]
We also characterize the extremal trees with the maximum number of maximal dissociation sets.\\
\emph{Keywords}: Independent set; Dissociation sets; Tree.
\end{abstract}
\section{Introduction}
\quad\,\,\,Let $G = (V(G), E(G))$ be a graph. A set $S\subseteq V(G)$ is called an independent set of $G$ if no two vertices of S are adjacent in G. A maximal independent set is an independent set that is not a proper subset of any other independent set.

In the 1960s, Erd\H{o}s and Moser proposed the problem of determining the maximum number of maximal independent sets among all the graphs of order $n$. This problem was solved by Erd\H{o}s, and later Moon and Moser\cite{J.W.MoonL.Moser}. Regarding this problem, various families of graphs have been examined, such as trees, forests, connected graphs, bipartite graphs, unicyclic graphs, graphs with at most $r$ cycles,
and other graph families, have been examined \cite{K.M.KohC.Y.GohF.M.Dong, H.Law, J.Liu, B.E.SoganV.R.Vatter, S.G.Wagner, I.Wlch, J.Zito}.
The concept of the dissociation set was first introduced in 1981 by Yannakakis \cite{M.Yannakakis}; it refers to a vertex set $D$ of a graph $G$ such that the induced subgraph $G[D]$ has a maximum degree at most $1$. In particular, a dissociation set is maximal if it is not contained as a proper subset in any other dissociation set, and maximum if it has maximum cardinality. The dissociation number of G represents the cardinality of a maximum dissociation set of $G$. In the past forty years, dissociation sets have been investigated in \cite{F.ockJ.Pardey, F.BockJ.PardeyL.D.Penso, S.ChengB.Wu, Y.OrlovichA.Dolgui, W.SunS.Li, J.TuZ.ZhangY.Shi, M.Yannakakis}.

 We denote the set of all maximal dissociation sets of $G$ by $MD(G)$ and its cardinality by $\phi(G)=|MD(G)|$. 
 Recently, Cheng and Wu\cite{S.ChengB.Wu} considered this problem on forests and determined the largest number of maximal dissociation sets in forests of order $n$. They gave the following results.

\begin{theoremNoParens}[\cite{S.ChengB.Wu}]\label{S.ChengB.Wu}
For any forest $F$ of order $n\geq3$, then $\phi(F)\leq f(n),$ where  
 \[
f(n) :=
\begin{cases}
3^{\frac{n}{3}}, &  n \equiv 0 \pmod{3}~\text{and}~ n\geq 3,\\
4\cdot 3^{\frac{n-4}{3}}, &  n \equiv 1 \pmod{3}~\text{and}~n\geq 4, \\
5,                        &n=5,\\
16\cdot 3^{\frac{n-8}{3}}, &  n \equiv 2 \pmod{3}~\text{and}~n\geq 8.\\
\end{cases}
\]
with equality if and only if
\[
F \cong
\begin{cases}
\frac{n}{3}P_3, & \text{if } n \equiv 0 \pmod{3},\\
\frac{n-4}{3}P_3\cup K_{1,3}, & \text{if } n \equiv 1 \pmod{3},\\
K_{1,4}, & \text{if } n=5,\\
\frac{n-8}{3}P_3\cup 2K_{1,3}, & \text{if } n \equiv 2 \pmod{3}~\text{and}~ n\geq 8.
\end{cases}
\]
\end{theoremNoParens}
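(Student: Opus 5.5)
We prove the forest bound of Cheng and Wu by strong induction on $n$, using multiplicativity over components and a local branching argument at the end of a longest path. First we record the base values. Direct counting gives $\phi(P_3)=3$, $\phi(K_{1,3})=4$ and $\phi(K_{1,4})=5$, and the small cases $n\le 8$ can be checked by hand. Next, for a disconnected forest $F=F_1\cup F_2$ we have $\phi(F)=\phi(F_1)\phi(F_2)$, since maximal dissociation sets of a disjoint union are exactly unions of maximal dissociation sets of the parts. Together with the inequalities $f(a)f(b)\le f(a+b)$ (checked residue class by residue class, with equality exactly in the listed configurations), this reduces the problem to trees: it suffices to show $\phi(T)\le f(n)$ for every tree $T$ of order $n$, with equality only for $P_3$, $K_{1,3}$, $K_{1,4}$. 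For larger trees the inequality should even be strict by a margin. Products of small components then produce exactly the extremal forests listed in the statement.

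For a tree $T$ we take a longest path $v_0v_1\cdots v_d$ and look at $u=v_2$, whose children are leaves or stars centred at children of $u$ (the children of $v_1$ are leaves). We then write down branching inequalities. Every maximal dissociation set $D$ either avoids some vertex whose removal separates a small piece, or contains it with a specified partner. Concretely, suppose $v_1$ has $k\ge1$ leaf children. We split $MD(T)$ according to whether $v_1\in D$. If $v_1\notin D$, all its leaves are in $D$, and $D$ restricted to $T-N[v_1]\cup\{u\}$ is essentially a maximal dissociation set of a smaller forest. If $v_1\in D$, then either at most one leaf is in $D$ and $u\notin D$, or no leaf is in $D$. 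Each branch injects into $MD$ of a forest on at most $n-3$ or $n-4$ vertices, times a bounded multiplicity. This yields recurrences of the shape
\[
\phi(T)\le \phi(F_1)+k\,\phi(F_2)+\phi(F_3),
\]
with $|F_i|\le n-k-2$ and similar. We then apply the induction hypothesis $\phi(F_i)\le f(|F_i|)$. When $d\le 3$ (stars and double stars) we compute $\phi$ exactly instead.

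The main obstacle is making the branching tight enough. The naive bound $\phi(T)\le \sum_i f(n-a_i)$ only closes when the removal sizes $a_i$ satisfy $\sum 3^{-a_i/3}\le 1$, and the cases $k=1$ (where $v_1$ has degree 2) and $u$ having many pendant $P_2$'s or leaves are borderline. We would handle these by a finer case analysis on the structure at $u$: the number of children of $u$ that are leaves and the number that are centres of stars. In each subcase we delete all of $N[u]$'s pendant structure at once, so that every branch loses at least $3$ vertices and the dominant branch loses exactly $3$ with multiplicity $3$ (reproducing the $3^{n/3}$ growth). We expect the residue-class bookkeeping for $n\equiv 2\pmod 3$, where $f$ jumps by factors $16/9$, to require the most careful checking. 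Strictness in every tree case beyond the three small ones will then give the equality characterization.
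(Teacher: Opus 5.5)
The paper does not prove this statement. It is quoted from Cheng and Wu and used only as a black box, so your proposal has to stand on its own. Your reduction to trees is sound: $\phi$ is multiplicative over components, $K_1$ and $K_2$ components contribute a factor $1$ while $f$ is strictly increasing, and $f(a)f(b)\le f(a+b)$.

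The gap is that the tree inequality, which is the entire content of the theorem, is not established, and the branching you describe does not close. If $v_1$ has $k\ge 2$ leaf children $L$, then $\phi(T,v_1^0)=0$ and one has the exact identity
\[
\phi(T)=\phi\bigl(T-(\{v_1\}\cup L)\bigr)+k\,\phi\bigl(T-N[v_1]\bigr)+\phi\bigl(T-(N[v_1]\cup N[u])\bigr).
\]
Its first forest has order $n-k-1$, not at most $n-k-2$. For $k=1$ the branch $v_1\notin D$ loses only two vertices, because $\phi(T,\bar{v}_1)=\phi(T-\{v_0,v_1\},u)$.

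Bounding each term by $f$ already fails in the simplest configurations. For $k=2$, $d(u)=2$ and $n\equiv 2\pmod 3$, $n\ge 11$,
\[
f(n-3)+2f(n-4)+f(n-5)=(16+24+9)\cdot 3^{\frac{n-11}{3}}>48\cdot 3^{\frac{n-11}{3}}=f(n).
\]
For $k=1$, $d(u)=2$, $n\equiv 1\pmod 3$, $n\ge 10$, one gets $f(n-2)+f(n-3)+f(n-4)=\frac{37}{9}\cdot 3^{\frac{n-4}{3}}>f(n)$.

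The remedy you sketch cannot work as stated. A branch losing exactly $3$ vertices with multiplicity $3$ contributes $3f(n-3)=f(n)$, which holds for every $n\ge 6$ except $n=8$. That branch exhausts the whole budget, so any further nonzero branch overshoots. The equality clause of the induction hypothesis does not rescue the bad case either. It only gives $\phi\le f(m)-1$ for the non-extremal pieces, an additive saving of $3$ that the excess $3^{(n-11)/3}$ swamps as soon as $n\ge 17$.

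What is missing is a quantitatively stronger statement for trees, carried through the induction rather than merely expected at the end. You remark that large trees should be below $f$ ``by a margin'', but you never use it. In the bad configuration above, $T-(\{v_1\}\cup L)$ and $T-N[v_1]$ are themselves trees, of orders $n-3$ and $n-4$. So what makes such sums fit under $f(n)$ is a multiplicative, residue-dependent bound $\phi(T)\le c_{n\bmod 3}\,3^{n/3}$ for trees of order $n\ge 6$. Each $c_r$ must lie strictly below the corresponding forest constant $1$, $4\cdot 3^{-4/3}$, $16\cdot 3^{-8/3}$.

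You would then have to fix the constants, which cannot be too small. Joining a new vertex to the centres of two copies of $P_3$ gives a tree of order $7$ with $11$ maximal dissociation sets, forcing $c_1\ge 11\cdot 3^{-7/3}$. You would also have to check that this tree bound itself propagates through every configuration at the end of a longest path: the pendant leaves and stars at $u$, the degree of $u$, and the case $k=1$. That case analysis is the substance of the theorem. Announcing that the borderline cases ``would'' be handled by it leaves the proposal a plan rather than a proof.
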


 Wang, Zhang, Tu and Xiong \cite{ZiyuanWang} determined the second largest number of maximal dissociation sets in forests of order $n$.  
They also considered this problem on trees and determined the largest number of maximal dissociation sets in trees of order $n$. They gave the following results.
\begin{theoremNoParens}[\cite{ZiyuanWang}]\label{Wang,Zhang,Tu,Xiong}
$(1)$ For any forest $F$ of order $ n\geq4$, if $\phi(F)\leq f(n),$ then
$\phi(F)\leq f_2(n),$ where 
\[
f_2(n) :=
\begin{cases}
n-1, & n=4,5\\
6, &n=6,\\
11\cdot3^{\frac{n-7}{3}}, &  n \equiv 1 \pmod{3}~\text{and}~n\geq 7,\\
15\cdot3^{\frac{n-8}{3}}, &  n \equiv 2 \pmod{3}~\text{and}~n\geq 8,\\
20, &n=9,\\
64\cdot3^{\frac{n-12}{3}}, &  n \equiv 0 \pmod{3}~\text{and}~n\geq 12.
\end{cases}
\]

$(2)$ For any tree $T$ of order $n\geq4$, then 
$$\phi(T)\leq 3^{\frac{n-1}{3}}+\frac{n-1}{3},$$ 
with equality if and only if $n \equiv 1 \pmod{3}$ and $T\cong {T_n}^*.$
\end{theoremNoParens}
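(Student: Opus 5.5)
We read the hypothesis of part $(1)$ as intended in \cite{ZiyuanWang}: $F$ is not one of the extremal forests of Theorem~\ref{S.ChengB.Wu}, so $\phi(F)<f(n)$. Both parts will be proved together by strong induction on $n$. Two tools are used throughout. The first is multiplicativity over components: $\phi(F_1\cup F_2)=\phi(F_1)\phi(F_2)$. The second is a family of local deletion inequalities at the end of a longest path.

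\textbf{Local inequalities.} Let $P=v_0v_1v_2\cdots v_d$ be a longest path in a tree component, so that every child of $v_1$ is a leaf and every child of $v_2$ is a leaf or a support vertex of leaves. We classify each $D\in MD(T)$ by its intersection with the pendant star at $v_1$ and with $v_2$. This should yield inequalities of two kinds:
\[
\phi(T)\le \phi(T-N[v_1])+\phi(T-\{v_1\}\cup L(v_1))
\]
for the star case, and an analogous three-term inequality branching on whether $v_2\in D$, whether $v_1\in D$, or neither. Here $L(v_1)$ denotes the set of leaves at $v_1$. Each deleted piece is a forest of smaller order, so the inductive hypothesis applies to it, with $f$ or $f_2$ chosen according to whether that piece can be extremal. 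The small orders $n\le 12$ will be checked directly, either by exhausting trees or by using $f(n)$ for $n\le 5$. This handles the exceptional values $f_2(4)=3$, $f_2(5)=4$, $f_2(6)=6$ and $f_2(9)=20$.

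\textbf{Part (1).} Suppose $F$ is disconnected with components $F_1,\dots,F_k$. If some component is not extremal for its order, bound it by $f_2$ and the rest by $f$. If every component is extremal but the union is not, then the sizes are distributed badly, for example two copies of $K_{1,3}$ plus a $K_{1,4}$, or a $P_3$ next to a $K_{1,4}$. In every case the product is compared with $f_2(n)$ through elementary inequalities among powers of $3$ such as $4\cdot 4<3\cdot 3\cdot\frac{16}{9}$. These comparisons are checked residue class by residue class.

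If $F=T$ is a tree, part $(2)$ gives $\phi(T)\le 3^{(n-1)/3}+\frac{n-1}{3}$. It then remains to verify that this quantity, together with the corresponding tree bound in the other residues, is at most $f_2(n)$. For $n\equiv1$ this reads $3^{(n-1)/3}+\frac{n-1}{3}\le 11\cdot 3^{(n-7)/3}$, which is true for $n\ge7$. The other residues must be handled with a weaker tree estimate, such as $\phi(T)\le f(n-1)+O(n)$, obtained from the local inequalities.

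\textbf{Part (2).} We apply the local inequalities at $v_1,v_2$. The dominant branch removes a $P_3$-like piece and leaves a tree of order $n-3$, to which induction applies. The other branches leave forests that are not connected extremal configurations, so they are bounded by $f$, and often by $f_2$. Summing the branches should give $3\cdot\bigl(3^{(n-4)/3}+\frac{n-4}{3}\bigr)$ plus a lower-order term, which is at most $3^{(n-1)/3}+\frac{n-1}{3}$. Equality forces every branch to be tight, and this forces $T$ to be built by repeatedly attaching $P_3$'s to a central vertex, giving $T\cong T_n^*$.

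\textbf{Main obstacle.} The hardest step is the mutual dependence between the two parts. Part $(2)$ needs $f_2$ to control the non-dominant branches, while part $(1)$ needs the tree bound. I would run a single simultaneous induction on $n$, proving $(1)$ and $(2)$ at order $n$ assuming both at all smaller orders. The delicate part is the case analysis on the degree of $v_2$ and on how many pendant $P_2$'s and stars hang from it, where the additive linear term $\frac{n-1}{3}$ must be tracked exactly.
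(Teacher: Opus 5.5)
The paper does not prove this statement; it imports it from \cite{ZiyuanWang}. I am therefore judging your outline on its own terms. The closest argument in the paper is Case~1 of the proof of Theorem~\ref{main}, together with Claim~\ref{zw3}.

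\textbf{Part (1).} Your outline is acceptable here: multiplicativity over components, $f(a)f_2(b)\le f_2(a+b)$, and a comparison of products $3^a4^b5^c$. You also do not need a ``weaker tree estimate'' for the tree case. The bound of Part~(2) already gives $\phi(T)\le f_2(n)$ for every $n\ge 10$, so only small orders need a direct check.

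\textbf{First problem in Part (2): the local inequality.} Part~(2) is where the proof has a real gap. Your displayed star inequality is false. For $P_4=v_0v_1v_2v_3$ we have $\phi(P_4)=3$, given by $\{v_0,v_1,v_3\}$, $\{v_0,v_2,v_3\}$ and $\{v_1,v_2\}$. Your right-hand side is only $\phi(K_1)+\phi(K_2)=2$. Two things go wrong:
\begin{itemize}
\item You lose every set containing the edge $v_1v_2$.
\item When $v_1\in D$, each leaf partner of $v_1$ contributes its own term.
\end{itemize}
The correct exact statement, when $v_1$ has exactly two leaves $y,z$ and $x=v_2$, is $\phi(T)=\phi(X)+2\phi(X-x)+\phi(X-N_X[x])$ with $X=T-\{v_1,y,z\}$. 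This is the identity used in Case~1 of the proof of Theorem~\ref{main}.

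\textbf{Second problem in Part (2): the induction does not close.} The computation that is supposed to finish the induction fails. We have $3\bigl(3^{(n-4)/3}+\frac{n-4}{3}\bigr)=3^{(n-1)/3}+n-4$. This already exceeds $3^{(n-1)/3}+\frac{n-1}{3}$ by $\frac{2n-11}{3}>0$ for $n\ge 6$, before any lower-order term is added.

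In the identity above, the tree bound at order $n-3$ enters only once. Even if you bound the two leaf-partner branches by $f(n-4)=3^{(n-4)/3}$, which has no linear term, the case $n\equiv 1\pmod 3$ still reduces to a sharp joint estimate. You need $2\phi(X-x)+\phi(X-N_X[x])\le 2\cdot 3^{(n-4)/3}+1$. This is the case $m\equiv 1$ of $h(m)$ in Claim~\ref{zw3}, and it is the missing key step.

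\textbf{Why the joint estimate is not routine.} There are two cases.
\begin{itemize}
\item If $X-x\cong\frac{n-4}{3}P_3$, then $\phi(X-N_X[x])=1$, because $x$ meets each $P_3$ in exactly one vertex.
\item If $X-x$ is not extremal, the naive bound is $2f_2(n-4)+f(n-5)=\frac{176}{81}\,3^{(n-4)/3}$. For $n\ge 16$ this exceeds $\frac{162}{81}\,3^{(n-4)/3}+1$.
\end{itemize}
So in the second case you must use structure. One handle is that $X-x$ has exactly $d_X(x)$ tree components. Another is to first normalize the support vertices of a maximizing tree with the leaf-relocation lemmas of \cite{ZiyuanWang}, as in Lemma~\ref{tu3}.

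\textbf{Equality case.} The characterization $T\cong T_n^*$ must come from the equality case of this joint estimate, combined with $\phi(X)$ attaining the tree bound. Together these force $X\cong T_{n-3}^*$ with $x$ its centre. Your sentence ``equality forces every branch to be tight'' does not supply this argument.
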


\begin{figure}[htbp] 
 \centering
\begin{minipage}[b]{0.18\textwidth} 
        \includegraphics[width=\linewidth]{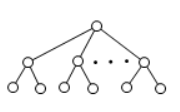} 
        \centering
 \caption{${T_n}^*$}
    \end{minipage}
\end{figure} 
To illustrate the upper bound more precisely, Wang, Zhang, Tu and Xiong  \cite{ZiyuanWang} proposed the following conjecture.
 \begin{conjectureNoParens}[\cite{ZiyuanWang}]
The largest number of maximal dissociation sets in trees of order $n\geq 7$ is
\[\begin{cases}
3^{\frac{n-1}{3}}+\frac{n-1}{3}, &  n \equiv 1 \pmod{3},\\
4\cdot 3^{\frac{n-5}{3}}+n-5, &  n \equiv 2 \pmod{3}, \\
16\cdot 3^{\frac{n-9}{3}}+3n-25, &  n \equiv 0 \pmod{3}. \\
\end{cases}\]
The extremal trees that achieve this largest number are depicted below.
\begin{figure}[htbp] 
 \centering
\begin{minipage}[b]{0.7\textwidth} 
        \includegraphics[width=\linewidth]{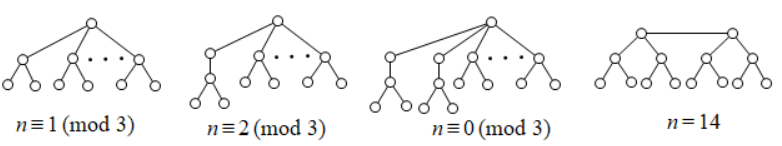} 
        \centering
    \end{minipage}
\end{figure} 
\end{conjectureNoParens}

We correct the above conjecture and prove the result below.
\begin{theorem}\label{main}
For any tree $T$ of order $n\geq3$, then $\phi(T)\leq g(n),$ where
\[
g(n) =
\begin{cases}
n,  &  n=3,4,5,6,\\
3^{\frac{n-1}{3}}+\frac{n-1}{3}, &  n \equiv 1 \pmod{3},~n\geq7,\\
4\cdot 3^{\frac{n-5}{3}}+n-5, &  n \equiv 2 \pmod{3},~n\geq8, \\
16\cdot 3^{\frac{n-9}{3}}+3n-25, &  n \equiv 0 \pmod{3},~n\geq12~\text{and }~n\neq21, \\
19, &  n=9, \\
1349, & n=21.
\end{cases}
\]
The extremal trees that achieve this largest number are illustrated in Figure \ref{fig2}. 

\begin{figure}[htbp] 
 \centering
\begin{minipage}[b]{0.92\textwidth} 
        \includegraphics[width=\linewidth]{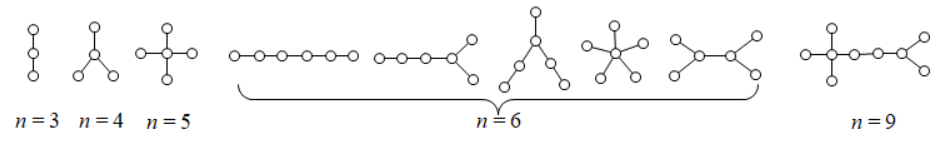} 
        \centering
    \end{minipage} \\
\begin{minipage}[b]{0.92\textwidth} 
        \includegraphics[width=\linewidth]{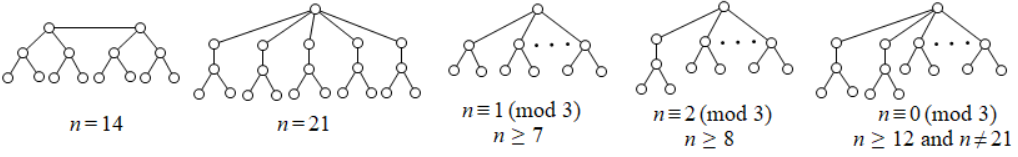} 
        \centering
    \end{minipage} 
    \caption{The extremal trees}
    \label{fig2}
\end{figure}
\end{theorem}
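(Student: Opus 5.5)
We prove the statement by strong induction on $n$. The small cases $3\le n\le 9$ are settled by direct enumeration; for $n\le 6$ a star $K_{1,n-1}$ already gives $n$ maximal dissociation sets. The special value $n=21$ arises because the generic formula is beaten there by a sporadic tree, which we also verify by explicit computation. For the inductive step we first record the basic recursive inequalities for $\phi$. For a vertex $v$, every maximal dissociation set $D$ either omits $v$, or contains $v$ with $\deg_{G[D]}(v)=0$, or contains $v$ together with exactly one neighbour $u$. This gives bounds of the form
\[
\phi(G)\le \phi(G-v)+\phi(G-N[v])+\sum_{u\in N(v)}\phi\bigl(G-(N[v]\cup N[u])\bigr),
\]
with sharper variants when $v$ is a leaf or a support vertex. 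The right-hand sides are forests, so Theorem~\ref{S.ChengB.Wu} (bound $f$) and Theorem~\ref{Wang,Zhang,Tu,Xiong}(1) (bound $f_2$ for non-extremal forests) control all terms not handled by induction.

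Next we take a longest path $v_0v_1\cdots v_d$ in $T$ and analyse the local structure at its end. All children of $v_1$ are leaves, and the children of $v_2$ are leaves or supports of stars. The cases are:
\begin{itemize}
\item[(a)] $\deg v_1\ge 4$, i.e.\ $v_1$ has at least three leaf children;
\item[(b)] $v_1$ has exactly two leaf children, so $v_1$ is the centre of a $P_3$ hanging from $v_2$;
\item[(c)] $v_1$ has exactly one leaf child, and $v_2$ has various numbers of such pendant $P_2$'s and leaves.
\end{itemize}
In each case we delete a suitable pendant branch $B$ (a $P_3$, a $K_{1,3}$, or the whole branch at $v_2$) and decompose the maximal dissociation sets according to their trace on $B\cup\{v_2\}$. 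This yields an inequality
\[
\phi(T)\le a\,\phi(T-B)+b\,\phi(F),
\]
where $F$ is a forest obtained by also removing $v_2$ or $N[v_2]$. For example, deleting a pendant $P_3$ at $v_1$ should give roughly $\phi(T)\le 3\phi(T-B)+(\text{small forest term})$. We then apply induction with $g$ to the tree $T-B$ and $f$ or $f_2$ to the forest $F$, and check that the sum is at most $g(n)$. The additive linear terms $\frac{n-1}{3}$, $n-5$, $3n-25$ in $g$ come precisely from these forest terms along a spine, so the comparison reduces to inequalities between expressions $c\cdot 3^{n/3}+\text{linear}$. These hold for large $n$ and are checked by hand for small $n$.

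The main obstacle is tightness. Near-extremal trees, such as $P_3$'s and $K_{1,3}$'s attached to a central path or vertex, make several inequalities hold with equality or near equality. This is acute in the case $n\equiv 0\pmod 3$, where residues mix: two $K_{1,3}$'s, or one $K_{1,4}$-type piece, must be present, and the competing configurations differ only in lower-order terms, which is exactly what produces the exceptions $n=9$ and $n=21$. Here the crude forest bound $f$ is too weak. We will first try to show that the forest $F$ is not the extremal forest of Theorem~\ref{S.ChengB.Wu} whenever $T$ is not among the claimed extremal trees, so that $f_2$ applies and gives a margin. Where that fails, we will directly count the sets using the recursion on the explicit extremal structure.

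Equality is characterised by tracking when every inequality in the chosen case is tight. This forces $T-B$ to be extremal of order $n-3$ (or $n-4$) and $F$ to be the extremal forest, which reconstructs the trees of Figure~\ref{fig2}.
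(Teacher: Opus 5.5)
There is a genuine gap. Your recursion framework is sound, but the step meant to close the induction (bound $T-B$ by $g$, bound the forests separately by $f$ or $f_2$, add) fails numerically, and the missing idea is the core of the paper's proof.

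\textbf{The pendant $P_3$ case.} Take a pendant $P_3$, $B=\{v_1,v_0,v_0'\}$, centred at $v_1$ and attached to $x=v_2$, and put $X=T-B$. Then exactly
\[
\phi(T)=\phi(X)+2\phi(X-x)+\phi(X-N_X[x]).
\]
So the coefficient of the tree term is $1$, not $3$; note that $3g(n-3)>g(n)$ for large $n$. The forest terms are also not small: already in $T_n^*$ you have $\phi(X)\approx 3^{(n-4)/3}$ while $2\phi(X-x)+\phi(X-N_X[x])=2\cdot3^{(n-4)/3}+1$.

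What you actually need is $2\phi(X-x)+\phi(X-N_X[x])\le g(n)-g(n-3)$, which for $n\equiv0\pmod 3$ is $32\cdot3^{(n-12)/3}+9$. Separate bounds cannot give this:
\begin{itemize}
\item $\phi(X-x)\le f_2(n-4)=15\cdot3^{(n-12)/3}$;
\item $d_X(x)$ may equal $2$, so $\phi(X-N_X[x])$ is only bounded by $f_2(n-6)=\frac{64}{9}\cdot3^{(n-12)/3}$.
\end{itemize}
Together these give about $37\cdot3^{(n-12)/3}$, a constant-factor overshoot; with $f$ in place of $f_2$ it is $41\cdot3^{(n-12)/3}$.

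Your fallback of showing the forest is non-extremal cannot rescue this. In the extremal configurations $X-x$ \emph{is} the extremal forest, so the margin must come from the coupling between $X-x=\bigcup X_i$ and $X-N_X[x]=\bigcup(X_i-x_i)$. For example, a $P_3$ hung at its centre contributes $3$ to the first product but only $1$ to the second.

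The paper encodes this coupling as an auxiliary induction on rooted trees, Claim~\ref{zw3}. It states $2\phi(X-x)+\phi(X-N[x])\le h(m)$, where $g(m)+h(m)=g(m+3)$ in the generic cases, apart from four exceptional pairs $(T_i,r_i)$. It holds only under structural restrictions: every support vertex has degree at least $3$ and at most two leaf neighbours, and $x$ is not a support vertex. Those restrictions come from working with a maximising tree $\widetilde T$ and applying the edge-moving Lemmas~\ref{tu1} and~\ref{tu2}, together with Lemma~\ref{tu3}. You have no substitute for this, which also leaves your case (c) unresolved.

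\textbf{Two further failures.}
\begin{itemize}
\item The sporadic value at $21$ propagates. Even with the sharp joint bound, $g(21)+h(21)=1349+2601=3950>3935=g(24)$, so the hypothesis $\phi(X)\le g(21)$ is too weak at $n=24$. The paper needs a refined bound $2\phi(X-x)+\phi(X-N[x])\le 2552$ for every order-$21$ rooted tree other than the equality configuration, plus an exact count in that configuration.
\item When the pendant $P_3$ hangs from a vertex $x$ of degree $2$, the same recursion with $\phi(X)\le g(n-3)$, $\phi(X-x)\le g(n-4)$ and $\phi(X-N[x])\le f(n-5)$ gives $(16+24+12)\cdot3^{(n-12)/3}>48\cdot3^{(n-12)/3}\approx g(n)$ for $n\equiv0\pmod3$. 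This is the paper's Case~2. It needs a different decomposition, around a vertex carrying one or more $K_{1,3}$'s attached through a leaf, and the auxiliary bound $\phi(X,x)\le f_2(|V(X)|-1)+1$ of Lemma~\ref{zw4}.
\end{itemize}
Without a coupled estimate like Claim~\ref{zw3} and these additional tools, the inequalities you plan to check are false as stated.
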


\section{Preliminary lemmas}
\quad\,\,\,Let $G=(V,E)$ be a simple graph. For $v\in V(G)$, let $N_G(v)=\{u\in V:uv\in E\}$ and $N_G[v]=\{v\}\cup N_G(v)$. We define the degree of $v$ as $d_G(v) = |N_G(v)|$. A vertex $v$ is called a leaf if $d(v) = 1$ and a vertex is called a support vertex if it is adjacent to a leaf. 
We follow \cite{J.A.Bondy} for graph-theoretical terminology and notation not defined here. 

Let $MD(G)$ be the set of all maximal dissociation sets of $G.$ For $v\in V(G)$, we introduce the
following notations, which give a partition of $MD(G)$:
\begin{itemize}
    \item $MD(G,\bar{v})=\{S:S\in MD(G), v\notin S\}$ and $MD(G,v)=\{S:S\in MD(G), v\in S\}$,
    \item $MD(G, v^{i})=\{S:S\in MD(G), v\in S, d_{G[S]}(v)=i\}$ for $i\in\{0,1\}$.
\end{itemize}
Clearly, $MD(G, v)=MD(G, v^0)\cup MD(G, v^1)$, where $G[S]$ is the subgraph of $G$ induced by $S$.

Let $\phi(G,\bar{v})$, $\phi(G,v)$ and $\phi(G,v^i)$ be the cardinalities of $MD(G,\bar{v})$,
$MD(G,v)$ and $MD(G,v^i)$, respectively. Obviously,
$\phi(G) = \phi(G,\bar{v})+\phi(G,v)=\phi(G,\bar{v})+\phi(G,v^0)+\phi(G,v^1)$. Furthermore, for $\{x_1,..., x_r\}\subset V(G)$, $\hat{x}_i\in\{\bar{x}_i,x_i,{x_i}^0,{x_i}^1\},$ let $MD(G,\hat{x}_1\hat{x}_2\ldots\hat{x}_r)=\bigcap\limits_{i=1}^r MD(G,\hat{x}_i)$
and $\phi(G,\hat{x}_1\hat{x}_2\ldots\hat{x}_r)=|MD(G,\hat{x}_1\hat{x}_2\ldots\hat{x}_r)|$.
\begin{lemma}\label{1}
With the notations above, the following statements hold.
\item[\rm{(i)}] $f(m_1)\cdot f_2(m_2)\leq f_2(m_1+m_2)$ for $m_1\geq3,m_2\geq4.$
\item[\rm{(ii)}] $ \frac{f(n)}{f(n+1)}\leq \frac{4}{5}$ for $n\geq 3$; $ \frac{f_2(n)}{f_2(n+1)}\leq \frac{3}{4}$ for $n\geq 4$; $\frac{g(n)}{g(n+1)}\leq \frac{3}{4}$ for $n=7$ and $n\geq 9$, $\frac{g(8)}{g(9)}=\frac{15}{19}$;
$\frac{g(n)}{f(n)}\leq \frac{11}{12}$ for $n\geq 6$ and $n\neq 8$, $\frac{g(8)}{f(8)}=\frac{15}{16}$; $g(n)\geq16\cdot 3^{\frac{n-9}{3}}+3n-25$ for $n\geq 14$. 
\end{lemma}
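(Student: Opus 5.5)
This lemma is a collection of elementary inequalities among the explicit functions $f$, $f_2$ and $g$. The plan is to verify each by splitting into residue classes mod $3$ and reducing to a finite check plus a monotone tail. Throughout, write $n=3k+r$. In each residue class, $f$ and $f_2$ are constants times $3^{k}$ (apart from the sporadic values at $n\le 6$, $f(5)=5$ and $f_2(9)=20$), and $g$ is a constant times $3^{k}$ plus a linear term. So every ratio below becomes either an exact constant or a quantity whose behaviour in $n$ is easy to read off.

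For (i), I treat $m_1$ in its three classes and $m_2$ in its classes, including the exceptional values $m_1=5$ and $m_2\in\{4,5,6,9\}$. For generic $m_1,m_2$, both sides are of the form $c\cdot 3^{(m_1+m_2)/3}$. Comparing the normalized constants $f(m)/3^{m/3}\in\{1,\,4/3^{4/3},\,16/3^{8/3}\}$ against $f_2(m)/3^{m/3}$ reduces the claim to a $3\times 3$ table of numeric inequalities. For example, $f(m_1)=3^{m_1/3}$ with $f_2(m_2)=11\cdot 3^{(m_2-7)/3}$ gives equality. Another case is $m_1\equiv 1$, $m_2\equiv 1$, where the sum is $\equiv 2$: we need $4\cdot 11\cdot 3^{(m_1+m_2-11)/3}\le 15\cdot 3^{(m_1+m_2-8)/3}$, i.e. $44\le 45$. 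The sporadic values give finitely many extra cases, each checked directly. I also use that a product bound only needs checking at the smallest member of each class, since multiplying $m_1$ by $P_3$ factors scales both sides by $3$.

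For (ii), the ratios $f(n)/f(n+1)$ and $f_2(n)/f_2(n+1)$ are constants on each residue class (e.g. $f(3k)/f(3k+1)=3/4$, $f(3k+1)/f(3k+2)=4/(16/3)=3/4$, $f(3k+2)/f(3k+3)=16/27$), together with a few small-$n$ exceptions such as $f(4)/f(5)=4/5$ and the values involving $f_2(9)$. I compute all of these.

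For $g$, I write $g(n+1)-\tfrac43 g(n)$ and show it is nonnegative. The exponential parts compare as above: for instance, $4\cdot 3^{(n-4)/3}$ versus $\tfrac43\cdot 3^{(n-1)/3}$ gives equality in the leading term. So the sign is decided by the linear terms, and the exponential term dominates them for $n$ beyond a small threshold; below it I check numerically using $g(9)=19$, $g(21)=1349$, and small values.

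The ratio $g/f\le 11/12$ is handled the same way: the leading-constant ratio is e.g. $3^{(n-1)/3}/(4\cdot 3^{(n-4)/3})=3/4$. The excess $(n-1)/3$ divided by $f(n)$ decays, so only small $n$ need a direct check, giving the stated exception $g(8)/f(8)=15/16$. The last claim, $g(n)\ge 16\cdot 3^{(n-9)/3}+3n-25$ for $n\ge 14$, is interpreted for all residues; I read it as comparing $g(n)$ with that expression (treated formally, with a fractional exponent for $n\not\equiv 0$) and check it class by class. It is the definition when $n\equiv0$, $n\ne 21$, and $1349\ge 16\cdot 3^4+38=1334$ when $n=21$. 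For the other classes the leading constants satisfy $3^{8/3}>16$ and $4\cdot 3^{4/3}>16$, so a short computation suffices.

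The main obstacle is bookkeeping rather than ideas: the many small-$n$ exceptional values, notably $n=8,9,21$ and the sporadic $f(5)$ and $f_2(4),\dots,f_2(9)$, must each be verified by hand, and any step that mixes exceptional values requires care.
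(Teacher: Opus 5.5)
Your proposal is correct. For part (ii) it is essentially the paper's argument. The paper also tabulates $f(n)/f(n+1)$, $f_2(n)/f_2(n+1)$, $g(n)/g(n+1)$ and $g(n)/f(n)$ by residue class for $n\ge 10$ and checks $n\le 9$ directly. For the last inequality it inducts $n\to n+3$ from the bases $n=14,16$. The inductive step is positivity of $(2\cdot 3^{8/3}-32)3^{(n-9)/3}-8$ and $(8\cdot 3^{4/3}-32)3^{(n-9)/3}-6$, which is your comparison $3^{8/3}>16$, $4\cdot 3^{4/3}>16$ in another form.

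Part (i) is where you take a genuinely different route, and the paper does no arithmetic there. It takes an extremal forest $F_1$ on $m_1$ vertices and a forest $F_2$ on $m_2$ vertices with $\phi(F_2)=f_2(m_2)$. Since $F_2$ is not extremal, the characterization in Theorem 1 ($P_3$'s plus at most two $K_{1,3}$'s, or $K_{1,4}$) shows that $F_1\cup F_2$ is not extremal either. Hence $f(m_1)f_2(m_2)=\phi(F_1\cup F_2)\le f_2(m_1+m_2)$ by Theorem 2(1). This takes two lines and handles every sporadic value ($f(5)$, $f_2(4),f_2(5),f_2(6)$, $f_2(9)$) automatically. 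It does, however, rely on $f_2$ being attained and on the extremal characterization.

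Your $3\times 3$ table plus a finite list of sporadic checks is self-contained and uses only the closed forms. It also shows how tight (i) is: equality for $m_1\equiv 0$ with $m_2\ge 7$, $m_2\ne 9$, and margins like $44\le 45$, $60\le 64$, $176\le 192$. The price is more bookkeeping.

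Three small remarks on your write-up:
\begin{itemize}
\item When you add a $P_3$ to $m_1$, the left side gains exactly a factor $3$ (for $m_1\ne 5$). The right side gains \emph{at least} $3$, not exactly $3$: at $m_1+m_2=9$ you have $f_2(12)/f_2(9)=16/5$. At least $3$ is all you need.
\item In $g(n+1)-\frac43 g(n)$ the leading terms cancel for $n\equiv 1,2 \pmod 3$. So the sign there is purely that of $(5n-32)/9$ and $(5n-46)/3$, with no exponential term to dominate anything. Only for $n\equiv 0$ does an exponential surplus $\frac{17}{3}3^{(n-9)/3}$ beat the linear deficit $(11n-100)/3$.
\item The final inequality is very tight at $n=14$: $117$ against $16\cdot 3^{5/3}+17\approx 116.84$. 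That check needs an accurate value of $3^{5/3}$.
\end{itemize}
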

\begin{proof}
$(i)$ Let $F_1$ and $F_2$ be forests of orders $m_1$ and $m_2$, respectively. If forest $F_2$ does not attain the upper bound $f(m_2)$, then Lemma \ref{S.ChengB.Wu} implies that forest $F_1\cup F_2$ does not attain the upper bound $f(m_1+m_2)$ either. Consequently, we have $f(m_1)\cdot f_2(m_2)\leq f_2(m_1+m_2)$.

$(ii)$ By direct verification, the result follows when $n\leq9$. For $n\geq10$, we have
\begin{center}
\begin{minipage}{0.45\linewidth}
\[
\frac{f(n)}{f(n+1)}=
\begin{cases}
\displaystyle\frac{3}{4}, &  n \equiv 0 \pmod{3}, \\
\displaystyle\frac{3}{4}, &  n \equiv 1 \pmod{3},\\
\displaystyle\frac{16}{27},&  n \equiv 2 \pmod{3}.\\
\end{cases}
<\frac{4}{5},
\]
\end{minipage}
\hspace{0.5cm}
\begin{minipage}{0.45\linewidth}
\[
\frac{f_2(n)}{f_2(n+1)}=
\begin{cases}
\displaystyle\frac{64}{99}, &  n \equiv 0 \pmod{3}, \\
\displaystyle\frac{11}{15}, &  n \equiv 1 \pmod{3},\\
\displaystyle\frac{45}{64},&  n \equiv 2 \pmod{3}.\\
\end{cases}
<\frac{3}{4},
\]
\end{minipage}
\end{center}

\begin{align*}
\frac{g(n)}{g(n+1)}=
\begin{cases}
\displaystyle\frac{16}{27}+\frac{227n-2025}{ 3^{\frac{n+12}{3}}+27n}, &  n \equiv 0 \pmod{3},~n\geq12,~n\neq21,\\
\displaystyle\frac{1349}{2194}, & n=21,\\
\displaystyle\frac{3}{4}+\frac{32-5n}{4\cdot 3^{\frac{n-4}{3}}+n-4}, &  n \equiv 1 \pmod{3},\\
\displaystyle\frac{3}{4}+\frac{184-20n}{16\cdot 3^{\frac{n-8}{3}}+3n-22},&  n \equiv 2 \pmod{3},~n\geq11,~n\neq20,\\
\displaystyle\frac{987}{1349}, & n=20.\\
\end{cases}
<\frac{3}{4},
\end{align*}

\begin{align*}
\frac{g(n)}{f(n)}=
\begin{cases}
\displaystyle\frac{16}{27}+\frac{3n-25}{ 3^{\frac{n}{3}}}, &  n \equiv 0 \pmod{3},~n\geq12,~n\neq21,\\
\displaystyle\frac{1349}{2187}, & n=21,\\
\displaystyle\frac{3}{4}+\frac{n-1}{12\cdot 3^{\frac{n-4}{3}}}, &  n \equiv 1 \pmod{3},\\
\displaystyle\frac{3}{4}+\frac{n-5}{16\cdot 3^{\frac{n-8}{3}}},&  n \equiv 2 \pmod{3}.\\
\end{cases}
<\frac{11}{12}.
\end{align*}

We now provide $g(n)\geq16\cdot 3^{\frac{n-9}{3}}+3n-25$ for $n\geq 14$.

For $n \equiv 0 \pmod{3}$ and $n\geq14$, we have 
\begin{align*}
g(n)&=
\begin{cases}
16\cdot 3^{\frac{n-9}{3}}+3n-25, &  n \equiv 0 \pmod{3},~n\neq21, \\
1349, & n=21.\\
\end{cases}\\
&\geq 16\cdot 3^{\frac{n-9}{3}}+3n-25.
\end{align*}

For $n \equiv 1 \pmod{3}$ and $n \equiv 2 \pmod{3}$, we proceed by induction to show that $g(n)\geq16\cdot 3^{\frac{n-9}{3}}+3n-25$. We first verify the conclusion directly for  $n = 14$  and  $n = 16$.
We assume that $g(n)\geq16\cdot 3^{\frac{n-9}{3}}+3n-25$ and now prove that $g(n+3)\geq16\cdot 3^{\frac{n-6}{3}}+3n-16$. Note that
\begin{align*}
&g(n+3)-16\cdot 3^{\frac{n-6}{3}}-3n+16\\
=&
\begin{cases}
3^{\frac{n+2}{3}}+\frac{n+2}{3}-16\cdot 3^{\frac{n-6}{3}}-3n+16, &  n \equiv 1 \pmod{3}, \\
4\cdot 3^{\frac{n-2}{3}}+n-2-16\cdot 3^{\frac{n-6}{3}}-3n+16, &  n \equiv 2 \pmod{3}.\\
\end{cases}\\
=&
\begin{cases}
[3^{\frac{n-1}{3}}+\frac{n-1}{3}-16\cdot 3^{\frac{n-9}{3}}-3n+25]+(2\cdot3^{\frac{8}{3}}-32)\cdot 3^{\frac{n-9}{3}}-8, &  n \equiv 1 \pmod{3}, \\
[4\cdot 3^{\frac{n-5}{3}}+n-5-16\cdot 3^{\frac{n-9}{3}}-3n+25]+(8\cdot3^{\frac{4}{3}}-32)\cdot 3^{\frac{n-9}{3}}-6, &  n \equiv 2 \pmod{3}.\\
\end{cases}\\
>&0.
\end{align*}
The second inequality holds because $g(n)\geq16\cdot 3^{\frac{n-9}{3}}+3n-25$, $(2\cdot3^{\frac{8}{3}}-32)\cdot 3^{\frac{n-9}{3}}-8>0$ for $n\geq15$ and $(8\cdot3^{\frac{4}{3}}-32)\cdot 3^{\frac{n-9}{3}}-6>0$ for $n\geq14$.
\end{proof}

This method is applied consistently in the remainder of the paper, thereby simplifying the comparison of inequalities.

\begin{lemmaNoParens}[\cite{ZiyuanWang}\label{tu1}]
Let $T$ be a tree. If there exists a support vertex $v$ of degree $2$ in $T$ that is adjacent to a leaf $u$ and a nonleaf vertex $x$, then $\phi(T)\leq\phi(T')$,
where $T'$ is the tree obtained from $T$ by deleting the edge $uv$ and adding a new edge $ux$, i.e., $T'=T-uv+ux$. Moreover, if $N_T(x)=\{v,x_1,\ldots,x_l\}$ and there exists a component in $T-\{u,v,x, x_1,\ldots,x_l\}$ of order at least $3$, then $\phi(T)<\phi(T')$.
\end{lemmaNoParens}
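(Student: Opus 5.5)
The plan is to compare $MD(T)$ and $MD(T')$ directly. Both sets are organised by their trace on the common forest $H=T-\{u,v\}=T'-\{u,v\}$. The only change between $T$ and $T'$ is local: in $T$ we have the path $u$--$v$--$x$, while in $T'$ both $u$ and $v$ are pendant at $x$. For $S\in MD(T)$, write $R=S\cap V(H)$. The constraints and the maximality conditions on vertices of $H\setminus\{x\}$ are the same in $T$ and $T'$ once we know whether $x\in S$ and what $d_{G[S]}(x)$ is. So everything reduces to a case analysis on the status of $x$ in $R$. I would split $MD(T)$ and $MD(T')$ into the classes $\bar x$, $x$ with $d_{H[R]}(x)=1$, and $x$ with $d_{H[R]}(x)=0$, and build an injection class by class.

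\emph{Case $x\notin S$.} In both trees $u$ and $v$ must both lie in $S$. In $T$ they form a $K_2$; in $T'$ they are isolated. In both trees $x$ cannot be added: in $T$ the vertex $v$ would reach degree $2$, and in $T'$ the vertex $x$ would. Hence $S\mapsto S$ is a bijection on this class.

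\emph{Case $x\in S$, $d_{H[R]}(x)=1$.} In $T$, $v\notin S$, so $u\in S$ by maximality. In $T'$, neither $u$ nor $v$ can be in $S$, and neither can be added. Since $x$ already has degree $1$ in both, the maximality conditions on the neighbours $x_i$ of $x$ coincide. Hence $S\mapsto S\setminus\{u\}$ is a bijection on this class.

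\emph{Case $x\in S$, $d_{H[R]}(x)=0$.} This is the delicate class. In $T$ there are two options. If $R\cup\{v\}$ is used, then $x$ has degree $1$ via $v$, so no $x_i$ may be addable anyway. If $R\cup\{u\}$ is used with $v\notin S$, then $x$ has degree $0$ in $S$, so maximality additionally requires that no $x_i$ can be added to $R$. In $T'$, the sets $R\cup\{u\}$ and $R\cup\{v\}$ are both legal, and each requires only that no $x_i$ be addable given that $x$ already has degree $1$. This is a weaker condition than the $T$-condition for $R\cup\{u\}$. So I would map $R\cup\{v\}\mapsto R\cup\{v\}$ and $R\cup\{u\}\mapsto R\cup\{u\}$. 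I would then check that the image is a valid maximal set in $T'$; this is exactly the observation that the $T'$-conditions are implied by the $T$-conditions. The three cases have disjoint images, distinguished by whether $x\in S$ and by which of $u,v$ are present. The map is therefore an injection, and $\phi(T)\le\phi(T')$.

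\emph{Strict inequality, the main obstacle.} For $\phi(T)<\phi(T')$ I need an $R$ in the last class for which $R\cup\{u\}$ is maximal in $T'$ but not in $T$. Concretely, I need $x$ isolated in $R$, some neighbour $x_i\notin R$ such that adding $x_i$ is blocked only by $x$ (it would give $x$ degree $2$ in $T'$, but only degree $1$ in $T$), and $R$ maximal otherwise. To construct it, take a component $C$ of $T-\{u,v,x,x_1,\ldots,x_l\}$ with $|C|\ge 3$, attached to some $x_i$ via a vertex $w\in C$. I would put $x$ in $R$ and all $x_j$ out, and choose the part of $R$ inside $C$ so that $w$ together with a neighbour $w'\in C$ lies in $R$, with $w$ of degree $1$. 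This is possible precisely because $|C|\ge 3$ lets $w$ have a neighbour inside $C$. Then $x_i$ cannot be added through $w$, since $w$ would reach degree $2$. I would extend greedily to a maximal configuration elsewhere, keeping $x$ isolated in $H[R]$. The same blocking also stops $x_i$ in $T$, so I must adjust the choice: leave $w$ out of $R$ and block $w$ by two neighbours in $C$, or by a $K_2$ adjacent to it, so that $x_i$ is then blocked only via $x$. Getting this local configuration right for every possible shape of $C$ is where I expect the work to lie. If a uniform choice fails, I would fall back on a small case split on whether $w$ is a leaf of $C$.
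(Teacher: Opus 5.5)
Your injection for the weak inequality $\phi(T)\le\phi(T')$ is correct. The paper only quotes this lemma from Wang, Zhang, Tu and Xiong, so there is no in-paper proof to compare with.

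Your case analysis actually yields an exact count. Put $W=T-\{u,v,x,x_1,\ldots,x_l\}$. The sets $R$ in your third class are exactly $R=R'\cup\{x\}$ with $R'\in MD(W)$. Each such $R$ contributes $R\cup\{v\}$ to both trees and $R\cup\{u\}$ to $T'$. In $T$, however, $R\cup\{u\}$ is maximal exactly when every $x_i$ has a neighbour in $R'$. Hence $\phi(T')-\phi(T)$ equals the number of $R'\in MD(W)$ for which some $x_i$ has no neighbour in $R'$.

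The gap is in the strict part, and it is not where you placed it. The shape of $C$ is harmless, and your own fix always works. A tree component of order at least $3$ has a maximal dissociation set avoiding its attachment vertex $w$: start from two neighbours of $w$ if $d_C(w)\ge 2$, or from the edge $w'w''$ just beyond $w$ if $d_C(w)=1$, then extend greedily.

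What your step ``extend greedily elsewhere'' overlooks is the other neighbours of $x_i$. If $x_i$ is adjacent to a vertex of a $K_1$- or $K_2$-component of $W$, that vertex lies in every maximal dissociation set of $W$. Then $x_i$ is blocked in $T$ whether or not $x$ is isolated. When this happens for every $x_i$, there is no strict inequality, and the ``Moreover'' part as stated is false.

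Here is a counterexample. Let $T$ have edges $uv,vx,xx_1,x_1y,x_1c_1,c_1c_2,c_2c_3$. Then $N_T(x)=\{v,x_1\}$, and $T-\{u,v,x,x_1\}$ contains the path $c_1c_2c_3$ of order $3$. But $y$ lies in all three maximal dissociation sets of $W$. One checks $\phi(T)=\phi(T')=11$: in both trees there are $4$ sets avoiding $x$, one in which $xx_1$ is an induced edge, and $3+3$ with $x$ isolated in $H[R]$.

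So no completion of your sketch can prove the statement as written. What your formula does prove is that $\phi(T)<\phi(T')$ if and only if some $x_i$ is adjacent to no component of $W$ of order at most $2$. In particular this holds if some $x_i$ is a leaf of $T$. That corrected condition is the one you should state and verify.
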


\begin{lemmaNoParens}[\cite{ZiyuanWang}\label{tu2}]
Let $T$ be a tree and $k\geq2$ be an integer. Let $v$ be a support vertex in $T$ of degree $k+1$ that is adjacent to $k$ leaves $u_1,\ldots,u_k$ and a nonleaf vertex $t$. If the vertex $t$ is adjacent to a leaf $x$, then $\phi(T)\leq\phi(T')$, where $T'=T-vt+vx$. Moreover, if $|V(T)\setminus\{u_1,\ldots,u_k,v,x\}|\geq3$, then $\phi(T)<\phi(T')$.
\end{lemmaNoParens}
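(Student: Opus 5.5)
The plan is to reduce both $\phi(T)$ and $\phi(T')$ to counts on one common forest, and then compare the two expressions term by term. Put $S=\{v,u_1,\ldots,u_k\}$ and $F=T-S$. Deleting $S$ from $T'$ leaves the same forest $F$, which contains both $t$ and $x$, with $x$ a leaf of $F$ adjacent to $t$. The two trees differ only in where the star $S$ is attached: $T$ hangs the star from $t$ through the edge $vt$, while $T'$ hangs it from $x$ through $vx$. So I will write $\phi(T)$ as a combination of refined counts $\phi(F,\hat t\,\ldots)$, write $\phi(T')$ as the same combination with $x$ in place of $t$, and then show that the $x$-version is at least as large.

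\textbf{Step 1: decomposing $\phi(T)$.} I split $MD(T)$ according to the trace $D\cap S$ of a maximal dissociation set $D$.
\begin{itemize}
\item If $v\notin D$, maximality forces all $u_i\in D$. Since $k\ge 2$, the vertex $v$ can never be added, so $D\cap V(F)$ is simply a maximal dissociation set of $F$. This contributes exactly $\phi(F)$.
\item If $v\in D$ with $d_{G[D]}(v)=1$ through some $u_i$, then $t\notin D$. The set $D\cap V(F)$ must be a dissociation set of $F-t$ that is maximal in $F$ except possibly at $t$; note $t$ is blocked anyway, because adding it would give $v$ degree $2$. This contributes $k\cdot N_1(t)$, where $N_1(t)$ counts such sets.
\item If $v\in D$ with $d_{G[D]}(v)=1$ through $t$, then no $u_i$ lies in $D$, which is automatically maximal on $S$ because adding any $u_i$ would raise the degree of $v$. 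Also $t$ has degree $0$ inside $D\cap V(F)$. This contributes a count $N_2(t)$ of sets "maximal in $F$ with $t^0$", again allowing for the blocking effect of $v$.
\item The case $v\in D$ with degree $0$ is impossible, since then $u_1$ could be added.
\end{itemize}
Hence $\phi(T)=\phi(F)+k\,N_1(t)+N_2(t)$, and in the same way $\phi(T')=\phi(F)+k\,N_1(x)+N_2(x)$.

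\textbf{Step 2: comparing the $t$-terms with the $x$-terms.} This is the main obstacle, and the term that causes it is $N_1$. To compare $N_1(t)$ with $N_1(x)$, I will build an injection from the sets counted by $N_1(t)$ (sets avoiding $t$) into those counted by $N_1(x)$ (sets avoiding $x$). Take such a set $D'$ with $t\notin D'$.
\begin{itemize}
\item Since $x$ is a leaf of $F$ adjacent only to $t$, maximality essentially forces $x\in D'$.
\item The image will replace $x$ by $t$ when $t$ can legally be added once $x$ is removed.
\item Otherwise, $t$ has two neighbours $y,z\in D'$, or a neighbour $y\in D'$ of degree $1$. In that case I keep $D'\setminus\{x\}$; it remains maximal relative to $x$ because $t$ is still blocked.
\end{itemize}
I still need to check that these two rules produce disjoint images; they should, because whether $t$ lies in the image tells the two rules apart.

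For $N_2$ a direct comparison is easier. Sets with $t^0$ force $x\notin D'$, since $x$ is adjacent to $t$. Sets with $x^0$, counted by $N_2(x)$, contain in particular all sets that use neither $t$ nor any of its neighbours. The plan is to map $D'\mapsto (D'\setminus\{t\})\cup\{x\}$, after checking that maximality is preserved.

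\textbf{Step 3: strict inequality.} If $|V(T)\setminus\{u_1,\ldots,u_k,v,x\}|\ge 3$, then $F-x$ has at least $3$ vertices including $t$. Using this extra room, I will exhibit one explicit set in $MD(T')$ that is not in the image of the injection. The candidate is a set containing $v$, $x$ and a neighbour of $t$ other than $x$, which exists or can be arranged because $F-x$ has at least $3$ vertices. This gives $\phi(T)<\phi(T')$.
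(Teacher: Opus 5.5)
The paper gives no proof of this lemma; it quotes it from Wang et al. So I judge your argument on its own. The non-strict inequality goes through once one map in Step 2 is repaired, but the strictness argument in Step 3 is missing, and the witness you propose cannot work.

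Your Step 1 is correct and can be sharpened. Put $H=T-\{v,u_1,\dots,u_k,x\}$, a tree containing $t$. Then $N_1(t)=\phi(H-t)$, $N_2(t)=\phi(H-N_H[t])$, $N_1(x)=\phi(H)$ and $N_2(x)=\phi(H-t)$, hence
\[
\phi(T')-\phi(T)=k\bigl[\phi(H)-\phi(H-t)\bigr]+\bigl[\phi(H-t)-\phi(H-N_H[t])\bigr].
\]
Your $N_1$ injection is fine. It is the general fact $\phi(G-w)\le\phi(G)$: keep $D$ if it is maximal in $G$, otherwise add $w$.

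Your $N_2$ map $D'\mapsto(D'\setminus\{t\})\cup\{x\}$ is not fine. Once $t$ is removed, neighbours of $t$ in $H$ that were blocked only by $t$ may become addable, so the image need not be counted by $N_2(x)$. Concretely, let $t$ have exactly two neighbours besides $v$, both leaves $x$ and $y$, so $H$ is the edge $ty$. Then $\{v,t\}\in MD(T)$ is sent to $\{v,x\}$, which is not maximal in $T'$ because $y$ can be added. The repair is to send $\{t\}\cup D''$ to $\{x\}\cup D''\cup Y$, where $Y$ is the set of $H$-neighbours of $t$ that are individually addable to $D''$. In a tree these are pairwise nonadjacent with no common neighbour other than $t$, so they can be added together, and $D''$ is recovered as the trace on $V(H)\setminus N_H[t]$.

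Step 3 is the real gap: it contains no argument, and the proposed witness fails. In the example above, $|V(H)|=2$ and $\phi(T)=\phi(T')=k+4$. Yet $\{v,x,y\}\in MD(T')$ contains $v$, $x$ and a neighbour of $t$ other than $x$. Under the corrected map it is simply the image of $\{v,t\}$; it only looked new because of the faulty map. So the hypothesis $|V(H)|\ge 3$ must be used in a real way, and what you need is that one of the two brackets above is positive.

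Here is one way to get that. Let $y_1,\dots,y_m$ be the neighbours of $t$ in $H$ and $B_i$ the component of $H-t$ containing $y_i$. Then $\phi(H-t)=\prod_i\phi(B_i)$ and $\phi(H-N_H[t])=\prod_i\phi(B_i-y_i)$. So the second bracket is positive unless every injection $MD(B_i-y_i)\to MD(B_i)$ is a bijection. In that case every set of $MD(B_i,y_i)\neq\emptyset$ has the form $M\cup\{y_i\}$ with $M\in MD(B_i-y_i)$.

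\begin{itemize}
\item If $m\ge 2$, take such an $M$ for $B_2$. Complete $\{t,y_1\}\cup M$ with maximal dissociation sets of $B_1-N_{B_1}[y_1]$ and of $B_j-y_j$ for $j\ge 3$, giving some $S\in MD(H,t)$. Then $y_2$ is addable to $S\setminus\{t\}$ in $H-t$, so $S$ is missed by the $N_1$ injection.
\item If $m=1$, the hypothesis $|V(H)|\ge 3$ gives a neighbour $z\ne t$ of $y_1$. Extend $\{y_1,z\}$ to a set of $MD(B_1)$ and use the bijection. This yields $M\in MD(B_1-y_1)$ in which $z$ is isolated and is the only neighbour of $y_1$. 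Then $\{t\}\cup M\in MD(H,t)$ while $M\notin MD(H-t)$.
\end{itemize}

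In both cases the first bracket is positive, which gives $\phi(T)<\phi(T')$.
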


\begin{lemmaNoParens}\label{tu0}
Let $\widetilde{T}$ be a tree satisfying $\phi(\widetilde{T})=\max\{\phi({T}) :{T} \text{~is a tree and }|V({T})|=n\}.$ Then, $\phi(\widetilde{T})\geq g(n)$.
\end{lemmaNoParens}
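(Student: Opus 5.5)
The plan is to prove the lemma by explicit construction. Since $\widetilde{T}$ maximizes $\phi$ over all trees of order $n$, it suffices to exhibit, for every $n\geq 3$, one tree $T$ of order $n$ with $\phi(T)=g(n)$. Then $\phi(\widetilde{T})\geq\phi(T)=g(n)$. The candidate trees are those of Figure \ref{fig2}, and the work is to count their maximal dissociation sets exactly.

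\emph{Small orders $3\leq n\leq 6$.} Take the star $K_{1,n-1}$.
\begin{itemize}
\item If the center is not in $S$, then $S$ must be the whole leaf set, by maximality. This gives $1$ set.
\item If the center is in $S$, then it can have at most one neighbour in $S$. By maximality it has exactly one, and that leaf can be any of the $n-1$ leaves. This gives $n-1$ sets.
\end{itemize}
Hence $\phi(K_{1,n-1})=n=g(n)$.

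\emph{Large orders.} Each extremal tree in Figure \ref{fig2} has a central vertex $c$ to which copies of $P_3$ (and, for $n\not\equiv 1 \pmod 3$, one or two copies of $K_{1,3}$ or a short path) are attached. I will use the partition
\[
\phi(T)=\phi(T,\bar c)+\phi(T,c^0)+\phi(T,c^1).
\]
\begin{itemize}
\item For $\phi(T,\bar c)$: a maximal dissociation set of $T$ avoiding $c$ restricts to a maximal dissociation set of $T-c$. Conversely, a set $S\in MD(T-c)$ is maximal in $T$ exactly when $c$ cannot be added. So $\phi(T,\bar c)$ equals $\phi(T-c)$ minus the number of $S\in MD(T-c)$ with $S\cup\{c\}$ still a dissociation set. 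The quantity $\phi(T-c)$ is a product over the branches, using $\phi(P_3)=3$ and $\phi(K_{1,3})=4$. The correction term counts the $S$ that contain no neighbour of $c$, or exactly one neighbour of $c$ that is isolated in $G[S]$. This term is computed branch by branch from the local states $\phi(B,\hat a)$ of each branch $B$ at its attachment vertex $a$.
\item For $\phi(T,c^0)$ and $\phi(T,c^1)$: fixing the state of $c$ forces the states of its neighbours. The count then factors as a product over branches, summed over the choice of the unique neighbour of $c$ in $S$ in the $c^1$ case.
\end{itemize}
Summing these should produce $3^{k}+k$, $4\cdot 3^{k}+(n-5)$ or $16\cdot 3^{k}+3n-25$ in the respective residue classes. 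The linear term comes from the $c^1$ contributions and the boundary corrections.

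\emph{The exceptional orders $n=9$ and $n=21$.} The extremal trees there are specific small trees, for which the conjectured general formula is beaten. I will compute $\phi$ directly for the trees of Figure \ref{fig2}, using the same three-way decomposition at a well-chosen vertex, and check that the results are $19$ and $1349$.

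\emph{Main obstacle.} The main difficulty is bookkeeping: choosing the attachment vertices correctly and getting the $\bar c$ correction term exactly right. A naive choice, such as attaching each $P_3$ to $c$ by an end vertex, gives only $3^k$ when $n=3k+1$, not $3^k+k$. So I will first verify the precise shape from the figure on the smallest case ($n=7$, $8$, $12$) by brute force. I will then run the general decomposition with per-branch state counts $(\phi(B,\bar a),\phi(B,a^0),\phi(B,a^1))$ and multiply them out to obtain the formulas for all $k$.
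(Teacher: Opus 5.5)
Your plan is the paper's proof. You exhibit the trees of Figure~\ref{fig2} (stars $K_{1,n-1}$ for $3\le n\le 6$) and count them with the split $\phi(T,\bar c)+\phi(T,c^0)+\phi(T,c^1)$, computing $\phi(T,\bar c)$ as $\phi(T-c)$ minus the members of $MD(T-c)$ to which $c$ can be added, exactly as the paper does for $n=21$; the paper likewise writes out only a few orders and calls the rest analogous. To turn your ``should produce'' into actual numbers, take $c$ adjacent to the centers of $k$ copies of $P_3$ and to one leaf of each of zero, one or two copies of $K_{1,3}$, which gives $(3^k-1)+1+k$, $(4\cdot 3^k-3)+2+(3k+1)$ and $(16\cdot 3^k-8)+4+(9k+6)$ for $n=3k+1,\ 3k+5,\ 3k+9$; for $n=9$ use a vertex with three pendant leaves whose fourth neighbour is joined to a leaf of a $K_{1,3}$ ($4+0+3\cdot 4+3=19$), and for $n=21$ join $c$ to a leaf of each of five copies of $K_{1,3}$ ($(4^5-2^5-5\cdot 2^4)+2^5+5\cdot 3^4=1349$).
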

\begin{proof}
To prove the result, we will show that $\phi({T})=g(n)$ where $T$ are the trees in Fig.~\ref{fig2}. We choose $n=3,4,9,21$ for illustration. The other case can be proven by the same discussion.

If $n=3$, as shown in Fig.~\ref{L7-1}~$(a)$, the graph $T$ has exactly three maximal dissociation sets $\{v,v_1\}$, $\{v,v_2\}$, and $\{v_1,v_2\}$. Thus, $\phi({T})=3=g(3)$. 

If $n=4$, as shown in Fig.~\ref{L7-1}~$(b)$, the graph $T$ has exactly four maximal dissociation sets $\{v,v_1\}$, $\{v,v_2\}$, $\{v,v_3\}$ and $\{v_1,v_2,v_3\}$. Thus, $\phi({T})=4=g(4)$. 

If $n=9$, as shown in Fig.~\ref{L7-1}~$(c)$, we have $\phi({T})=\phi({T},\bar{v})+\phi({T},v^0)+\phi({T},v^1)=\phi({T},\bar{v}v_1v_2v_3)+\phi({T},v^0)+\sum\limits_{i=1}^{3}\phi({T},vv_i)+\phi({T},vv_4)=4+0+3\cdot4+3=19=g(19)$.

If $n=21$, as shown in Fig.~\ref{L7-1}~$(d)$, we have\\
$\phi({T})=\phi({T},\bar{v})+\phi({T},v^0)+\phi({T},v^1)\\
~~~\,~~\,~\,=[\phi({T}-{v})-\phi({T}-{v},\bar{v}_1\cdots\bar{v}_5)-\sum\limits_{i=1}^{5}\phi({T}-{v},\bar{v}_1\cdots\bar{v}_{i-1}{{v}_i}^0\bar{v}_{i+1}\cdots\bar{v}_{5})]+\phi({T},v^0)+\sum\limits_{i=1}^{5}\phi({T},vv_i)\\
~~~\,~~\,~=(4^5-2^5-5\cdot2^4)+2^5+5\cdot3^4\\
~~~\,~~\,~=1349=g(21).$

Overall, we have $\max\{\phi({T}) :{T} \text{~is a tree and }|V({T})|=n\}\geq g(n),$ and the lemma follows.
\end{proof}
\begin{figure}[htbp]
\centering
\begin{minipage}[t]{0.5\textwidth}
\centering
\includegraphics[width=\linewidth]{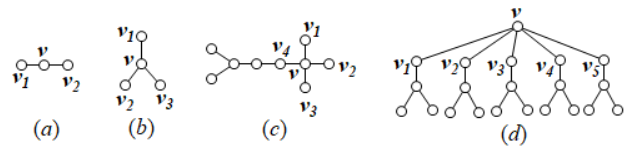}
\caption{}
\label{L7-1}
\end{minipage}
\hspace{0.2cm}
\begin{minipage}[t]{0.13\textwidth}
\centering
\includegraphics[width=\linewidth]{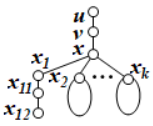}
\caption{}
\label{127}
\end{minipage}
\hspace{0.2cm}
 \begin{minipage}[t]{0.15\textwidth}
        \centering
        \includegraphics[width=\linewidth]{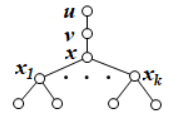}
        \caption{}
        \label{128}
    \end{minipage}
\end{figure}
\begin{lemma}\label{tu3}
Let $\widetilde{T}$ be a tree of order $n\geq10$ and $\phi(\widetilde{T})=\max\{\phi({T}) : {T} \text{~is a tree and }|V({T})|=n\}$. Assume that $v\in V(\widetilde{T})$ and that the components of $\widetilde{T}-v$ include $p$ copies of $K_1$ and $q$ copies of $K_2$. If $\phi({T})\leq g(n-1)$ for every tree $T$ of order $n-1$, then $0\leq p\leq2$, $ q=0.$ 
\end{lemma}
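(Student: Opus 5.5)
Suppose, for contradiction, that $p\geq 3$ or $q\geq 1$; we derive an upper bound on $\phi(\widetilde{T})$ strictly below $g(n)$, contradicting Lemma \ref{tu0}. Throughout, the main tool is the partition $\phi(\widetilde{T})=\phi(\widetilde{T},\bar v)+\phi(\widetilde{T},v^0)+\phi(\widetilde{T},v^1)$. The key observation is that each of these three classes can be bounded in terms of dissociation sets of smaller forests or trees. Here the hypothesis $\phi(T)\leq g(n-1)$ for trees of order $n-1$ (and, by the monotonicity in Lemma \ref{1}(ii), bounds for smaller orders via $g(n)/g(n+1)\leq 3/4$ or $15/19$) and Theorem \ref{S.ChengB.Wu} for forests supply the estimates.

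\emph{Case $q\geq1$.} Let $xy$ be a $K_2$-component of $\widetilde{T}-v$ with $vx\in E$, so $y$ is a leaf and $x$ is a degree-2 support vertex adjacent to the nonleaf $v$ (note $n\geq 10$ forces $v$ to be a nonleaf). Then Lemma \ref{tu1} applies with $u=y$. The strict part of that lemma applies if $\widetilde{T}-\{y,x,v\}-N(v)$ has a component of order at least $3$. In that case $\phi(\widetilde{T})<\phi(T')$, contradicting maximality.

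Otherwise every component of $\widetilde{T}-N[v]-\{y\}$ has order at most $2$, so $\widetilde{T}$ is a spider-like tree whose legs have length at most $3$. For such trees I would compute $\phi$ directly: for each leg type there are explicit local counts of the states (absent, isolated, matched to $v$), and the total is a product-and-sum formula in terms of the numbers of legs of each type. Maximizing this under $n\geq10$ and comparing with $g(n)$ shows any configuration containing a $K_2$-leg is beaten by $g(n)$. This matches the extremal trees in Fig.~\ref{fig2}, which have legs $P_3$ attached at the center or at pendant vertices.

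\emph{Case $p\geq3$.} Let $u_1,u_2,u_3$ be leaves at $v$. Compare $\widetilde{T}$ with $\widetilde{T}-u_1$, a tree of order $n-1$. Every maximal dissociation set of $\widetilde{T}$ restricts to a dissociation set of $\widetilde{T}-u_1$, and I would bound the excess. If $v\notin S$, then $S$ contains all the $u_i$, and $S\setminus\{u_1\}$ is maximal in $\widetilde{T}-u_1$. If $v\in S$ with $d_{G[S]}(v)=1$ via $u_1$, the sets are controlled by $\phi$ of the forest $\widetilde{T}-N[v]$. A cleaner route is to observe that having three or more leaves forces $\phi(\widetilde{T},v^0)=0$, since adding any $u_i$ is always possible. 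Then the count becomes $\phi(\widetilde{T},\bar v)+\sum_i\phi(\widetilde{T},vu_i)+\sum_{w}\phi(\widetilde{T},vw)$. The leaf terms give $p\cdot f(n-p-1)$-type bounds via Theorem \ref{S.ChengB.Wu}, and the remaining terms are bounded using $g(n-1)$ and $f$. The sum is then shown to be less than $g(n)$ using the ratios in Lemma \ref{1}(ii), namely $g/f\leq 11/12$ and $f(n)/f(n+1)\leq 4/5$.

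The main obstacle is this numerical comparison. The bounds are tight near the extremal structures (for example, $T_n^*$ has many leaves near a center, and $g(21)=1349$ is an exceptional value), so small $n$ such as $10\leq n\leq 14$ and the residues mod $3$ will likely have to be checked separately. I would first try to push the bound through generically with Lemma \ref{1}(ii), and handle the leftover small cases by direct verification.
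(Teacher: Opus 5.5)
There are two genuine gaps, one in each case.

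\textbf{Case $p\ge3$.} Your first idea, comparing with $\widetilde T-u_1$, is the right one. But you drop it before extracting what it gives, and the ``cleaner route'' you adopt instead is too lossy. For $p\ge3$ the comparison is exact:
\[
\phi(\widetilde T)=\phi(\widetilde T-u_1)+\phi(\widetilde T,vu_1).
\]
This holds for three reasons:
\begin{enumerate}
\item The map $S\mapsto S\setminus\{u_1\}$ is a bijection $MD(\widetilde T,\bar v)\to MD(\widetilde T-u_1,\bar v)$. In $\widetilde T-u_1$ the vertex $v$ still cannot be added, since $u_2,u_3\in S$.
\item Both $v^0$-classes are empty.
\item The classes of sets in which $v$ is matched to $u_i$ ($i\ge2$) or to a nonleaf neighbour $w$ are literally the same in both trees, because $u_1$ can never be added once $v$ has degree $1$ in $S$.
\end{enumerate}
So only \emph{one} leaf term is left over, and $\phi(\widetilde T,vu_1)\le\phi(\widetilde T-N[v])\le f(n-5)$; the star case gives $\phi=n$. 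Hence $\phi(\widetilde T)\le g(n-1)+f(n-5)<g(n)$, which is a routine check for every $n\ge10$, including the exceptional values $g(9)$ and $g(21)$.

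If instead you bound all $p$ leaf terms separately and the rest by $g(n-1)$, the estimate already fails for $n=10$, $p=3$ and one nonleaf neighbour: $g(9)+3f(5)=34>30=g(10)$. The delicate numerical comparison you anticipate is an artefact of that decomposition. Separately, the hypothesis controls only trees of order exactly $n-1$; the ratio bounds for $g$ in Lemma~\ref{1}(ii) give no information about $\phi$ of smaller trees.

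\textbf{Case $q\ge1$.} The missing idea is to use the \emph{non-strict} inequality of Lemma~\ref{tu1}. The tree $T'=\widetilde T-xy+vy$ satisfies $\phi(T')\ge\phi(\widetilde T)$, so $T'$ is again a maximizer, and in $T'$ the vertex $v$ has $p+2$ leaf neighbours. Repeating this when $q\ge2$ gives at least $4$. Thus $q\ge2$, or $q=1$ with $p\ge1$, reduces to the case $p\ge3$, which must therefore be proved first.

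Your route uses only the strict part of Lemma~\ref{tu1}. It then faces an unbounded multi-parameter family of depth-$3$ spiders: any number of leaves and $K_2$-legs at $v$, and any numbers of pendant $K_1$'s and $K_2$'s at each other neighbour of $v$. You assert the maximum of $\phi$ over this family but never compute it, and that assertion is the whole difficulty.

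Your strict-part reduction is exactly what the paper uses in the single remaining case $q=1$, $p=0$. There, however, it is combined with two further ingredients:
\begin{itemize}
\item the constraints already established at \emph{every} vertex, namely $p\le2$, $q\le1$ and $\min\{p,q\}=0$;
\item one further strict application of Lemma~\ref{tu1}.
\end{itemize}
Together these pin $\widetilde T$ down to $v$ joined to one $K_2$-leg and to the centres of $k$ copies of $P_3$. Then $n=3k+3$ and $\phi(\widetilde T)=3^k+k+2<g(n)$.
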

\begin{proof}
Let $\widetilde{T}$ be a tree of order $n\geq10$ and $\phi(\widetilde{T})=\max\{\phi({T}) : {T} \text{~is a tree and }|V({T})|=n\}$. Thus, $\widetilde{T}\geq g(n)$ by Lemma \ref{tu0}. Assume that $v\in V(\widetilde{T})$ and that the components of $\widetilde{T}-v$ include $p$ copies of $K_1$ and $q$ copies of $K_2$. Let $N_{\widetilde{T}}(v)=\{u_1,\ldots,u_p,x_1,\ldots,x_l\}$ where $u_1,\ldots,u_p$ are leaves and $x_1,\ldots,x_l$ are nonleaf  vertices. If $l=0$, then $\widetilde{T}$ is $K_{1,p}$. Thus, by the same discussion of Lemma \ref{tu0}, we have $\phi(\widetilde{T})=n<g(n)$, which contradicts the choice of $\widetilde{T}$. Therefore, $l\geq1$. If $p\geq3$, then $\phi(\widetilde{T},\bar{v})=\phi(T',\bar{v}).$  Let $T'=\widetilde{T}-u_p$; then, $T'$ is a tree. Thus,
\begin{align*}
\phi(\widetilde{T})&=\phi(\widetilde{T},\bar{v})+\phi(\widetilde{T},v^0)+\phi(\widetilde{T},v^1)\\
&=\phi(\widetilde{T},\bar{v})+\phi(\widetilde{T},v^0)+[\sum\limits_{i=1}^{l}\phi(\widetilde{T},vx_i)+\sum\limits_{i=1}^{p-1}\phi(\widetilde{T},vu_i)+\phi(\widetilde{T},vu_p)]\\
&=\phi(T',\bar{v})+\phi(T',v^0)+[\sum\limits_{i=1}^{l}\phi(T',vx_i)+\sum\limits_{i=1}^{p-1}\phi(T',vu_i)]+\phi(\widetilde{T},vu_p)\\
&=\phi(T',\bar{v})+\phi(T',v^0)+\phi(T',v^1)+\phi(\widetilde{T},vu_p)\\
&=\phi(T')+\phi(\widetilde{T},vu_p).
\end{align*}

Note that with $|V(T')|=|V(T)|-1=n-1,$ we have $\phi(T')\leq g(n-1)$ by assumption. By Theorem \ref{S.ChengB.Wu}, $\phi(\widetilde{T},vu_p)=\phi(\widetilde{T},vu_p\bar{u}_1\cdots\bar{u}_{p-1}\bar{x}_1\cdots\bar{x}_{l})\leq f(n-5)$ for $p\geq3$ and $l\geq1$. Hence, we have $\phi(\widetilde{T})\leq g(n-1)+f(n-5)<g(n)$ for $n\geq10$. By Lemma \ref{tu0}, it is a contradiction.

 If $q\geq2$, by Lemma \ref{tu1}, there exists a tree $T'$ such that $\phi(T')\geq\phi(\widetilde{T})$, and $u$ has at least four leaf-neighbors in $T'$. Hence,  $\phi(T')\geq\phi(\widetilde{T})=\max\{\phi({T}) : {T} \text{~is a tree and }|V({T})|=n\}.$ According to the above discussion about $p\geq3$, this is a contradiction. Thus, $0\leq p\leq 2$.

 If $q=1$, it first follows that $p=0$. Otherwise, similar to case $q\geq2$, by Lemma \ref{tu1}, there exists a tree $T'$ such that $\phi(T')\geq\phi(\widetilde{T})$, and $u$ has at least three leaf-neighbors in $T'$. According to the above discussion about $p\geq3$, this is a contradiction. Since $q=1$, there exists a support vertex $v$ of degree $2$. Let $u$ be a leaf adjacent to $v$, and let $x$ be a nonleaf neighbor of $v$. Let $N(x)=\{v,x_1,x_2,\ldots,x_k\}$ and $X_i$ be the component of $\widetilde{T}-x$ containing the vertex $x_i$$(i\in[k])$.

If there exists a component in $\widetilde{T}-\{u,v,x,x_1,\ldots,x_k\}$ of order at least $3$, then by Lemma \ref{tu1}, we have $\phi(\widetilde{T})<\phi(\widetilde{T}-uv+ux)$, which contradicts $\phi(\widetilde{T})=\max\{\phi({T}) : {T} \text{~is a tree and }|V({T})|=n\}$. Each component of $\widetilde{T}-\{u,v,x,x_1,\ldots,x_k\}$ has order at most $2$. From the above discussion, for $w\in V(\widetilde{T})$, which is adjacent to $p$ leaves and $q$ edges, where $0\leq p\leq2$, $0\leq q\leq1$ and $\min\{p,q\}=0$, we have that $X_i-x_i$ is either two copies of $K_1$ or one copy of $K_2$.
If $X_1-x_1$ is a copy of $K_2=x_{11}x_{12}$ (see Fig.~\ref{127}), then $k=1$. Otherwise, $X_2$ is a component with three vertices in $T-\{x,x_1,x_{11},x_{12}\}$. By Lemma \ref{tu1}, we have $\phi(\widetilde{T})<\phi(\widetilde{T}-x_{11}x_{12}+x_{1}x_{12})$, which is a contradiction. Thus, $k=1$ and $n=6$, which contradicts $n\geq10$.
Therefore, every $X_i$ is isomorphic to $K_{12}$ for $i=1,2,\ldots,k$; see Fig.~\ref{128}. We have
\begin{align*}
\phi(\widetilde{T})&=\phi(\widetilde{T},\bar{x})+\phi(\widetilde{T},x^0)+\phi(\widetilde{T},x^1)\\
&=\phi(\widetilde{T},\bar{x})+\phi(\widetilde{T},u\bar{v}x^0\bar{x}_1\cdots\bar{x}_k)+[\phi(\widetilde{T},x^1v\bar{x}_1\cdots\bar{x}_k)+\sum\limits_{i=1}^{k}\phi(T',x\bar{v}\bar{x}_1\cdots\bar{x}_{i-1}x_i\bar{x}_{i+1}\cdots\bar{x}_{k})]\\
&=3^{k}+1+(1+k)\\
&=3^{\frac{n-3}{3}}+\frac{n+3}{3}<g(n),
\end{align*}
where the last equality holds for $k=\frac{n-3}{3}$ and, in this situation, contradicts the choice of $\widetilde{T}$. This completes the proof.
\end{proof}
Let $X$ and $Y$ be two isomorphic graphs, and let vertices $x\in V(X)$ and $y\in V(Y)$ be fixed. We call $(X,x)$ and $(Y,y)$ equivalent, denoted by $(X,x)\sim (Y,y)$, if there exists a graph isomorphism $\varphi\colon X\to Y$ such that $\varphi(x)=y$.
\begin{lemma}\label{zw4}
Let $T$ be a tree of order $n$ $(n\geq 10)$ such that every support vertex has a degree of at least $3$ and at most two leaf neighbors and that every support vertex of degree $3$ with exactly two leaf neighbors has a nonleaf neighbor of degree $2$. Assume that $\phi(S)\leq g(|S|)$ for every tree $S$ of order at most $n$.
If there exists an edge $xy\in E(T)$ such that both components of $T- xy$ have order at least $5$, then 
$\phi(X,x)\leq f_2(|V(X)|-1)+1,$ where $X$ denotes the component of $T-xy$ that contains vertex $x$. If the equality holds, then $|V(X)|=6,7,10$.
\end{lemma}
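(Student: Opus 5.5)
The plan is to compare $\phi(X,x)$ with the number of maximal dissociation sets of the forest $F=X-x$, which has order $m-1$ where $m=|V(X)|\geq 5$. After that, the structural hypotheses on $T$ are used to show that $F$ cannot be one of the extremal forests of Theorem \ref{S.ChengB.Wu}. Throughout, $\phi(X,x)$ is read, as in the preliminaries, as the number of maximal dissociation sets of $X$ that contain $x$, and $\phi(X,x)=\phi(X,x^0)+\phi(X,x^1)$.

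\textbf{Step 1: an almost-injection into $MD(F)$.} Let $N_X(x)=\{x_1,\ldots,x_k\}$ and let $X_i$ be the component of $F$ containing $x_i$. Given $S\in MD(X,x)$, the set $S\setminus\{x\}$ is a dissociation set of $F$, but it need not be maximal there, since a neighbour $x_i\notin S$ was possibly blocked only by $x$. Fix the order $x_1,\ldots,x_k$ and greedily add to $S\setminus\{x\}$ every $x_i$ that keeps the set dissociated in $F$; then complete to a maximal dissociation set $\Psi(S)$ of $F$ by a fixed rule.
\begin{itemize}
\item Within $MD(X,x^1)$, and within $MD(X,x^0)$, $\Psi$ should be injective: $S$ is recovered from $\Psi(S)$ by deleting the vertices of $N_X(x)$ that were added, and maximality of $S$ in $X$ forces these to be exactly the neighbours that $x$ blocked.
\item Between the two families I expect at most one collision. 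It comes from the configuration in which every $x_i$ is absent and recoverable, so the $x^0$-set and one $x^1$-set have the same image.
\end{itemize}
This would give $\phi(X,x)\le \phi(F)+1$. If a clean injection fails, I would instead decompose by which $x_i$ lie in $S$ and bound each class by a product over the components $X_i-x_i$ or $X_i$. The total would then be compared with $f$ and $f_2$ using Lemma \ref{1}(i),(ii).

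\textbf{Step 2: $F$ is not extremal.} By Theorem \ref{S.ChengB.Wu}, $\phi(F)\le f(m-1)$, with equality only if the components of $F$ are $P_3$'s and at most two $K_{1,3}$'s (or $F\cong K_{1,4}$). Suppose $F$ is such a forest. Each component is then attached to $x$ through one vertex, and $x$ is attached to the rest of $T$ through $y$.
\begin{itemize}
\item A $P_3$ hung from $x$ by an end vertex makes its middle vertex a support vertex of degree $2$, which is forbidden.
\item A $P_3$ hung by its centre is a degree-$3$ support vertex with two leaves. Its only nonleaf neighbour is $x$, and $d_T(x)\ge 2$ together with the degree-$2$ requirement forces $d_T(x)=2$. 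Then $X$ is tiny, contradicting $m\geq 5$ except in small cases.
\item $K_{1,3}$ components give at least three leaf neighbours of a support vertex or again a bad support vertex, which is forbidden.
\end{itemize}
So $F$ is not extremal, Theorem \ref{Wang,Zhang,Tu,Xiong}(1) gives $\phi(F)\le f_2(m-1)$, and hence $\phi(X,x)\le f_2(m-1)+1$.

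\textbf{Step 3: equality.} Equality needs both $\phi(F)=f_2(m-1)$ and the exceptional collision. For large $m$ the second-extremal forests are $P_3$'s together with a small defective component, and the same degree constraints kill them. The residual possibilities are then checked by hand, which should leave exactly $m=6,7,10$.

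The main obstacle is Step 1: making the map $\Psi$ genuinely injective up to a single extra set, because several $x^1$-sets can complete to the same maximal set of $F$. If that happens, I would fall back on the component-product bound together with the ratio estimates of Lemma \ref{1}(ii) and the hypothesis $\phi(S)\le g(|S|)$ for the subtrees $X_i$.
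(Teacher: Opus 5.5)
Your Step~1 inequality $\phi(X,x)\le\phi(X-x)+1$ is false under the lemma's hypotheses, and it fails by far more than a single collision.

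\textbf{Counterexample to Step~1.} Let $x$ be adjacent to two leaves $a,b$ and to a vertex $c$. Let $c$ be adjacent to $d$, $d$ to $e$, and $e$ to two further leaves, so $m=8$; this is the pair $(T_1,x)$ of Fig.~\ref{311}. All hypotheses hold: $x$ has degree $4$ in $T$ with two leaf neighbours, and $e$ is a degree-$3$ support vertex with two leaves whose nonleaf neighbour $d$ has degree $2$. The counts are:
\begin{itemize}
\item $\phi(X,x^0)=0$, since a leaf neighbour of $x$ can always be added;
\item $\phi(X,xa)=\phi(X,xb)=\phi(K_{1,3})=4$;
\item $\phi(X,xc)=\phi(P_3)=3$.
\end{itemize}
So $\phi(X,x)=11$. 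But $F=X-x$ is $2K_1$ plus a $5$-vertex tree, and $\phi(F)=4$.

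Putting $t$ such branches at $c$ gives $\phi(X,x)=2\cdot4^t+3^t$ against $\phi(F)=4^t-t\cdot2^{t-1}+t\cdot3^{t-1}$. For $t=2$ this is $41$ versus $18$. So no bound of the form $\phi(X,x)\le\phi(X-x)+O(1)$ can hold.

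The mechanism is the one you feared. When $x$ has leaf neighbours, $\{x,a\}\cup R$ and $\{x,b\}\cup R$ both become $\{a,b\}\cup R$ after your greedy step, so they have the same image. Whole classes of $MD(X,x^1)$ therefore collapse, and $\phi(X-x)$ cannot see the multiplicity coming from the choice of $x$'s partner.

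\textbf{Step~2 also fails.} A $K_{1,3}$ hung from $x$ through one of its leaves is allowed: its centre has degree $3$, two leaves, and a nonleaf neighbour of degree $2$. Two such pendants at $x$ give $m=9$ and $F\cong 2K_{1,3}$, so $\phi(F)=16=f(8)>f_2(8)=15$, while $\phi(X,x)=4+3+3=10$.

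In all these examples the lemma itself holds. But both links of your chain $\phi(X,x)\le\phi(F)+1\le f_2(m-1)+1$ break, so Step~3 has nothing to stand on.

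\textbf{What the fallback would need.} Everything therefore rests on your fallback, which is where the paper's entire proof lives and which you have not carried out. One must bound $\phi(X,x)=\phi(X,x^0)+\sum_j\phi\bigl(X-(N[x]\cup N[x_j])\bigr)$ directly against $f_2(m-1)+1$. The paper does this by induction on $|V(X)|$, with the following case split.
\begin{itemize}
\item It splits on the number $s$ of leaf neighbours of $x$. For $s\neq0$ one has $\phi(X,x^0)=0$ and $\phi(X,x)\le sf(m-k-1)+(k-s)f(m-k-2)$. The configurations above, where individual terms attain $f$, are checked by hand.
\item For $s=0$ it splits on $k$ and on whether some $x_i$ has degree $2$.
\end{itemize}

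The indispensable ingredient you are missing is the self-referential induction. When a pendant $P_3$ or $K_{1,3}$ sits near $x$, the count is split at it, and the lemma itself is applied to smaller rooted pairs such as $(X-X_{11},x)$, $(X-X_1,x)$ or $(X-\{y,y_1,y_2\},x)$. This works together with $\phi(S)\le g(|S|)$ for subtrees and a separate argument that the relevant subforests are not $f$-extremal.

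A generic comparison via Lemma~\ref{1} does not handle the cases where the $x^0$- and $x^1$-contributions are simultaneously large. Likewise, the equality cases $m\in\{6,7,10\}$ come out of this term-by-term analysis, not from $\phi(F)=f_2(m-1)$ plus one collision.
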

\begin{proof}
Let $xy\in E(T)$ such that both components of $T- xy$ have an order of at least $5$, and let $X$ denote the component of $T-xy$ containing vertex $x$. Let $|V(X)|= m.$ Note that $m\geq 5$.
\begin{figure}[htbp]
\centering
\begin{minipage}[t]{0.5\textwidth}
\centering
\includegraphics[width=\linewidth]{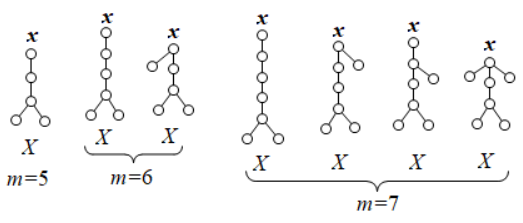}
\caption{}
\label{119}
\end{minipage}
\end{figure}

 When $5\leq m\leq 7$, according to the choice of ${T}$, the $(X,x)$ is as shown in Fig.~\ref{119}. It is easy to prove that the result holds.
Suppose that the result holds for $8\leq |V(X)|\leq m-1$. We now prove this for $|V(X)|= m\leq n-5.$

Let $N_X(x) = \{x_1, \dots, x_k\}$, with $x_1, \ldots, x_s$ being the leaf neighbors of $x$ in $X$. Then, $s \in \{0,1,2\}$. For each $i \in [k]$, let $X_i$ be the component of $X - x$ that contains $x_i$.
The structure of $T$ implies that for each $i$, either $X_i - x_i \cong P_3$ or it does not contain $P_3$ as a component, and moreover, $|X_i - x_i| = 0$ or $|X_i - x_i| \geq 3$.

\textbf{Case 1.} $s\neq0$.

In this case, $x$ is a support vertex; thus, $|N_T(x)| \geq 3$ and $|N_X(x)| \geq 2$, implying that $k \geq 2$.
Note that
\begin{align}
\phi(X,x)&=\phi(X,x^0)+\phi(X,x^1)=0+\sum\limits_{i=1}^{k}\phi(X,x\bar{x}_1\cdots\bar{x}_{i-1}x_{i}\bar{x}_{i+1}\cdots\bar{x}_{k})\\
&\leq\sum\limits_{i=1}^{k}f(m-k-|N(x_i)|)\notag\\
&\leq sf(m-k-1)+(k-s)f(m-k-2).
\end{align}

If $k = 2$, we have $s = 1$ for $|V(X)|\geq8$. From $(2)$, it follows that $\phi(X,x)\leq f(m-3)+f(m-4)\leq f_2(m-1)+1.$
Specifically, if the equality holds, then $m=10$.

If $k=3$, we first consider $s=1$. From $(2)$, we have $\phi(X,x)\leq f(m-4)+2f(m-5)\leq f_2(m-1)+1.$ Next, we consider that $s = 2$. If $\phi(X,x{x}_1\bar{x}_{2}\bar{x}_{3}) = \phi(X,x\bar{x}_1{x}_2\bar{x}_3) = f(m-4)$, then Theorem~\ref{S.ChengB.Wu} and the structure of $T$ imply $X-\{x,x_1,x_2,x_3\} \cong K_{1,3}$ or $2K_{1,3}$, meaning $(X,x) \sim (T_1,x)$ or $(T_2,x)$ (Fig.~\ref{311}).  Direct verification shows that $\phi(T_1,x) = 11 < 13=f_2(7)+1$ and $\phi(T_2,x) = 41 < 46=f_2(11)+1$. Otherwise, $\phi(X,x{x}_1\bar{x}_{2}\bar{x}_{3}) = \phi(X,x\bar{x}_1{x}_2\bar{x}_3) \leq f_2(m-4).$ Thus, from $(2)$, we have $\phi(X,x) \leq 2f_2(m-4) + f(m-5) < f_2(m-1) + 1$.

If $k \geq 4$, from $(2)$, we have
$\phi(X,x)\leq sf(m-k-1)+(k-s)f(m-k-2)\leq 2f(m-k-1)+(k-2)f(m-k-2)\overset{\triangle}{=}F(k).$ Combining $k \geq 4$ with the structure of $T$, we obtain $m-k-3\geq3$. Then, by Lemma~\ref{1}$(ii)$, we obtain 
{\small
\begin{align*}
F(k) -F(k+1)&=2f(m-k-1)+(k-2)f(m-k-2)-2f(m-k-2)-(k-1)f(m-k-3)\\
&\geq2\cdot\frac{5}{4}f(m-k-2)+(k-2)f(m-k-2)-2f(m-k-2)-(k-1)\cdot\frac{4}{5}f(m-k-2)\\
&= (\frac{k}{5}-\frac{7}{10})f(m-k-2)>0.
\end{align*}
}
 Hence, $\phi(X,x) \leq F(k) \leq F(4) = 2f(m-5) + 2f(m-6) < f_2(m-1) + 1$.

\begin{figure}[htbp]
\centering
\begin{minipage}[t]{0.15\textwidth}
        \centering
        \includegraphics[width=\linewidth]{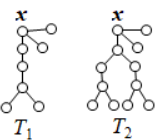}
        \caption{}
        \label{311}
    \end{minipage}
 \hspace{0.3cm}
     \begin{minipage}[t]{0.15\textwidth}
        \centering
        \includegraphics[width=\linewidth]{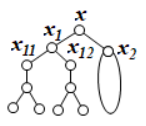}
        \caption{}
        \label{12}
    \end{minipage}
\hspace{0.3cm}
    \begin{minipage}[t]{0.12 \textwidth}
        \centering
        \includegraphics[width=\linewidth]{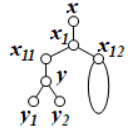}
        \caption{}
        \label{15}
    \end{minipage}
\hspace{0.2cm}
    \begin{minipage}[t]{0.16 \textwidth}
        \centering
        \includegraphics[width=\linewidth]{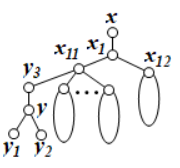}
        \caption{}
        \label{120}
    \end{minipage}
\end{figure}

\textbf{Case 2.} $s=0$.

We prove this case according to the degree of $x_i$ $(i\in[k]).$

\textbf{Subcase 2.1.} $d(x_i)\geq 3$ for all $i\in [k]$.

In this subcase, $X_i - x_i$ does not contain $P_3$ as a component.

\textbf{Subcase 2.1.1.} $k\geq 2$.

Note that $\phi(X,x)=\phi(X,x^0)+\phi(X,x^1)=\phi(X,x^0\bar{x}_1\cdots\bar{x}_{k})+\sum\limits_{i=1}^{k}\phi(X,x\bar{x}_1\cdots\bar{x}_{i-1}x_{i}\bar{x}_{i+1}\cdots\bar{x}_{k}).$

If for some $i \in [k]$, $\phi(X, x\bar{x}_1\cdots\bar{x}_{i-1}x_{i}\bar{x}_{i+1}\cdots\bar{x}_{k}) = f(m - k - |N(x_i)|),$ then by Theorem~\ref{S.ChengB.Wu} and the structure of $T$, it follows that $k = 2$ and $X_j - x_j \cong 2K_{1,3}$ for $j \in [2] \setminus \{i\}$. Consequently, the structure of $(X, x)$ is as shown in Fig.~\ref{12}. Then 
{\small
\begin{align*}
\phi(X,x)&=\phi(X,x^0)+\phi(X,x^1)\\
&\leq[\phi(X_1-x_1)-\phi(X_1-x_1,\bar{x}_{11}\bar{x}_{12})]\phi(X_2-x_2)+[\phi(X,x{x}_{1})+\phi(X,x{x}_{2})]\\
&\leq(16-4) f(m-11)+[ 9f(m-11)+16 f(m-12)]\\
&< f_2(m-1)+1.
\end{align*}
}
Otherwise, $\phi(X,x\bar{x}_1\cdots\bar{x}_{i-1}x_{i}\bar{x}_{i+1}\cdots\bar{x}_{k})\leq f_2(m - k - |N(x_i)|)$ for $i\in[k].$ From Theorems \ref{S.ChengB.Wu}, \ref{Wang,Zhang,Tu,Xiong} and the structure of $T$, we know that the forest $X - \{x_1,\ldots,x_{k}\}$ does not attain the upper bound $f(m - k - 1)$; hence, $\phi(X,x^0\bar{x}_1\cdots\bar{x}_{k}) \leq f_2(m-k-1)$. 
Therefore,
\begin{align}
\phi(X,x)&\leq\phi(X,x^0\bar{x}_1\cdots\bar{x}_{k})+\sum\limits_{i=1}^{k}\phi(X,x\bar{x}_1\cdots\bar{x}_{i-1}x_{i}\bar{x}_{i+1}\cdots\bar{x}_{k})\notag\\
&\leq f_2(m-k-1)+\sum\limits_{i=1}^{k}f_2(m-k-|N(x_i)|)\notag\\
&\leq f_2(m-k-1)+kf_2(m-k-3)\overset{\triangle}{=}F_2(k).
\end{align}
Combining $k \geq 2$ and $d(x_i)\geq3$ $(i\in[k])$ with the structure of $T$, we obtain $m-k-4\geq4$. Then, by Lemma~\ref{1}$(ii)$, we obtain 
{\small\begin{align*}
F_2(k) -F_2(k+1)&=f_2(m-k-1)+kf_2(m-k-3)-f_2(m-k-2)-(k+1)f_2(m-k-4)\\
&\geq(\frac{4}{3}-1)f_2(m-k-2)+[k-\frac{3}{4}(k+1)]f_2(m-k-3)\\
&\geq(\frac{1}{4}k-\frac{11}{36})f_2(m-k-3)>0.
\end{align*}}
 Thus, from $(3)$, we have
$\phi(X,x) \leq F_2(k) \leq F_2(2) = f_2(m-3) + 2f_2(m-5) < f_2(m-1) + 1.$

\textbf{Subcase 2.1.2.} $k= 1$.

Let $N_X(x_1) = \{x,x_{11}, x_{12}, \dots, x_{1l}\}$, and let $X_{1i}$ denote the component of $X - x_1$ containing $x_{1i}$ for each $i \in [l]$. If $l \geq 3$, by the structural conditions of $T$ and Theorems \ref{S.ChengB.Wu} and~\ref{Wang,Zhang,Tu,Xiong}, we have $\phi(X,x) = \phi(X,x^0) + \phi(X,x^1) \leq f_2(m-2) + f(m-5) < f_2(m-1) + 1.$

Now, consider the case $l = 2.$
If $|X_{11} - x_{11}| = 0$ (or $|X_{12} - x_{12}| = 0$), then $X_{11} = \{x_{11}\}$.
By Theorem~\ref{S.ChengB.Wu}, we have $\phi(X,x) = \phi(X,x^0) + \phi(X,x^1) \leq f(m-3) + f(m-4) \leq f_2(m-1) + 1,$ where equality holds only when $m = 10.$

Next, we consider that $|X_{11}-x_{11}|\neq0$ and $|X_{12}-x_{12}|\neq0$. The structure of $T$ implies that  $X_{11}-x_{11}$ (or $X_{12}-x_{12}$) does not contain any component of order $2$, but must contain at least one component of order at least $3$. If there exists a component with vertices $y,y_1,y_2$ in $X_{11}-x_{11}$ (or $X_{12}-x_{12}$), then $(X,x)$ is as shown in Fig.~\ref{15}. Note that $T-\{y,y_1,y_2\}$ and $T-\{y,y_1,y_2,x_{11}\}$ satisfy the conditions of Lemma \ref{zw4}; thus, by the induction hypothesis, we have
\begin{align*}
\phi(X,x)&=\phi(X,x\bar{y})+\phi(X,xy^0)+[\phi(X,xy^1y_1)+\phi(X,xy^1y_2)+\phi(X,xy^1x_{11})]\\
&=\phi(X-\{y,y_1,y_2\},x)+0+[2\cdot\phi(X-\{y,y_1,y_2,x_{11}\},x)+\phi(X,xy^1x_{11}\bar{y}_1\bar{y}_2\bar{x}_1)]\\
&\leq
\begin{cases}
[f_2(6)+1]+\{2[f_2(5)+1]+g(4)\}, & m=10,\\
[f_2(m-4)+1]+\{2[f_2(m-5)+1]+g(m-6)\}-1, & m>10. \\
\end{cases}\\
&\leq f_2(m-1)+1.
\end{align*}
For $m > 10$, the third inequality holds because the orders of the vertex sets $X - \{y, y_1, y_2\}$ and $X - \{y, y_1, y_2, x_{11}\}$ are not contained in $\{6, 7, 10\}$. Specifically, if the equality holds, then $m=10$.

If there exists a component with vertices $y,y_1,y_2,y_3$ in $X_{11}-x_{11}$ (or $X_{12}-x_{12}$), then $(X,x)$ is  as shown in Fig. \ref{120} and $|X_{12}|\geq5$. By the induction hypothesis and Theorems~\ref{S.ChengB.Wu} and~\ref{Wang,Zhang,Tu,Xiong}, we have
\begin{align*}
\phi(X,x)&=\phi(X,x^0)+\phi(X,x^1)\\
&=\{\phi(X,x^0\bar{y})+\phi(X,x^0{y}^0)+[\phi(X,x^0{y}^1y_1)+\phi(X,x^0{y}^1y_2)+\phi(X,x^0{y}^1y_3)]\}+\phi(X,x^1)\\
&\leq \{f_2(m-5)+0+[2f_2(m-6)+\phi(X_{12},x_{12})\cdot f(m-|X_{12}|-7)]\}+4f(m-8)\\
&\leq
\begin{cases}
f_2(m-5)+2f_2(m-6)+5f(m-13)+4f(m-8), & |X_{11}|= 6, \\
f_2(m-5)+2f_2(m-6)+7f(m-14)+4f(m-8), & |X_{11}|= 7,10, \\
f_2(m-5)+2f_2(m-6)+f_2(|X_{12}|-1)f(m-|X_{12}|-7)+4f(m-8), & |X_{11}|\neq 6,7,10.\\
\end{cases}\\
&<f_2(m-1)+1.
\end{align*}
The third inequality holds since $X_{12}\not\cong P_3$ or $K_{1,3}$, which yields $\phi(X,x^0\bar{y})<f(m-5)$ and $\phi(X,x^0{y}^1y_1)=\phi(X,x^0{y}^1y_2)<f(m-6)$.

If $X_{11} - x_{11}$ and $X_{12} - x_{12}$ do not contain components of order $3$ or $4$, then $|X_{11}|\geq6$ and $|X_{12}|\geq6$. Let $Y$ be a component in $X_{11} - x_{11}$ of order at least $5$.
By Theorems~\ref{S.ChengB.Wu} and~\ref{Wang,Zhang,Tu,Xiong}, we have
\begin{align*}
\phi(X,x)&=\phi(X,x^0)+\phi(X,x^1)=
\phi(X_{11})\phi(X_{12})+\phi(Y)\phi(X-\{x,x_1,{x}_{11},{x}_{12}\}\cup Y)\\
&\leq
g(|X_{11}|)f_2(|X_{12}|)+g(|Y|)f_2(m-|Y|-4) \\
&\leq
\begin{cases}
\displaystyle\frac{15}{16}\cdot f_2(m-2)+5f_2(m-9), & |Y|= 5, \\
\displaystyle\frac{15}{16}\cdot f_2(m-2)+\displaystyle\frac{15}{16}f_2(m-4), &  |Y|\geq 6. \\
\end{cases}\\
&<f_2(m-1)+1.
\end{align*}
The third inequality holds since $X-\{x,x_1,{x}_{11},{x}_{12}\}\cup Y$ does not contain components of order $3$ or $4$, which yields $\phi(X-\{x,x_1,{x}_{11},{x}_{12}\}\cup Y)<f(m-|Y|-4)$.

\textbf{Subcase 2.2.} There exists $i\in [k]$ such that $d(x_i)=2$.

Without loss of generality, assume that $d(x_1) = 2$. Note that $|X_{11}| \geq 3$.

If $|X_{11}|=3$ (implying $k\geq2$), as shown in Fig.~\ref{18}, where $V(X_{11}) = \{x_{11}, y_1, y_2\}$, then we have 
\begin{align*}
\phi(X,x)&=\phi(X,x\bar{x}_{11})+\phi(X,x{x_{11}}^0)+[\phi(X,x{x_{11}}^1y_1)+\phi(X,x{x_{11}}^1y_2)+\phi(X,x{x_{11}}^1x_1)]\\
&=\phi(X-X_{11},x)+0+[2\phi(X-X_1,x)+0]\\
&\leq [f_2(m-4)+1]+0+\{2[f_2(m-5)+1]+0\}\\
&<f_2(m-1)+1.
\end{align*}
The third inequality holds because $T-X_{11}$ and $T-X_1$ satisfy the conditions in the lemma.

If $|X_{11}|=4$ (implying $k\geq2$), as shown in Fig.~\ref{19}, where $V(X_{11}) = \{x_{11},y,y_1, y_2\}$, then we have
\begin{align*}
\phi(X,x)&=\phi(X,x\bar{x}_{11})+\phi(X,x{x_{11}}^0)+\phi(X,x{x_{11}}^1)\\
&=[\phi(X,x\bar{x}_{11}\bar{y})+\phi(X,x\bar{x}_{11}{y}^0)+\phi(X,x\bar{x}_{11}{y}^1)]+\phi(X,x{x_{11}}^0)+\phi(X,x{x_{11}}^1)\\
&=[\phi(X,x\bar{x}_{11}\bar{y}x_1y_1y_2\bar{x}_{2}\cdots\bar{x}_{k})+0+2\phi(X-X_{11},x)]+2\phi(X-X_1,x)\\
&<\{f(m-7)+0+2[f_2(m-5)+1]\}+2[f_2(m-6)+1]\\
&\leq f_2(m-1)+1.
\end{align*}
The third inequality holds because $T-X_{11}$ and $T-X_1$ satisfy the conditions in the lemma.

Without loss of generality, we assume that for each $i \in [k]$, $X_i - x_i$ is neither $P_3$ nor $K_{1,3}$.
We now consider $|X_{11}|\geq5$.

\textbf{Subcase 2.2.1.} $k\geq 2$.

Observe that for every $i \in [2, k]$, $X_{11}$ is a component of $X - \left(N[x] \cup N(x_i)\right)$. Therefore, $\phi(X, x\bar{x}_1\cdots\bar{x}_{i-1}x_i\bar{x}_{i+1}\cdots\bar{x}_k) \leq f_2(m - k - |N(x_i)|).$ Furthermore, since $X_{11}-x_{11}$ does not contain components of order $3$ and the graph $X_i - x_i$ is neither $P_3$ nor $K_{1,3}$ for each $i \in [k]$, it follows that
$\phi(X, x{x}_1\bar{x}_{2}\cdots\bar{x}_{k}) \leq f_2(m - k - |N(x_1)|).$
By the induction hypothesis and Theorems~\ref{S.ChengB.Wu} and~\ref{Wang,Zhang,Tu,Xiong}, we have
{\small
\begin{align*}
\phi(X,x)&=\phi(X,x^0)+\phi(X,x^1)=\phi(X,x\bar{x}_1\cdots\bar{x}_{k}x_{11})+\sum\limits_{i=1}^{k}\phi(X,x\bar{x}_1\cdots\bar{x}_{i-1}x_{i}\bar{x}_{i+1}\cdots\bar{x}_{k})\\
&\leq \phi(X_{11},x_{11})f(m-k-1-|X_{11}|)+\sum\limits_{i=1}^{k}f_2(m-k-|N(x_i)|)\\
&\leq F_3(k):=
\begin{cases}
5f(m-k-7)+kf_2(m-k-2), & |X_{11}|= 6, \\
7f(m-k-8)+kf_2(m-k-2), & |X_{11}|= 7,10,\\
(k+1)f_2(m-k-2), & |X_{11}|\neq 6,7,10.
\end{cases}
\end{align*}
}
First, we can directly verify that $F_3(2)\geq F_3(3)$. Next, following a discussion similar to that for $F(k)$ and $F_2(k)$ above, we show that $F_3(k)-F_3(k+1)\geq 0$ for $k\geq3$. Hence, $\phi(X,x)\leq F_3(2)< f_2(m-1)+1$.
\begin{figure}[htbp]
\centering
 \begin{minipage}[t]{0.16\textwidth}
        \centering
        \includegraphics[width=\linewidth]{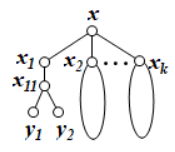}
       \caption{}
        \label{18}
    \end{minipage}
\hspace{0.3cm}
    \begin{minipage}[t]{0.15\textwidth}
        \centering
        \includegraphics[width=\linewidth]{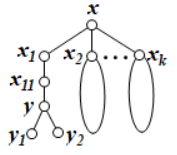}
        \caption{}
        \label{19}
    \end{minipage}
\hspace{0.3cm}
    \begin{minipage}[t]{0.13 \textwidth}
        \centering
        \includegraphics[width=\linewidth]{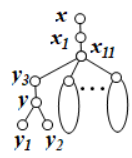}
        \caption{}
        \label{27}
    \end{minipage}
\hspace{0.3cm}
\begin{minipage}[t]{0.32\textwidth} 
        \includegraphics[width=\linewidth]{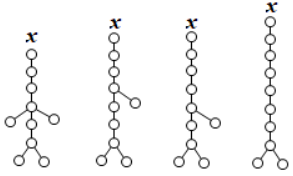} 
        \centering
        \caption{} 
        \label{L8_1} 
    \end{minipage}
\end{figure}

\textbf{Subcase 2.2.2.} $k = 1$.

The structure of $T$ together with $m\geq8$ implies that $X_{11}-x_{11}$ does not contain any components of order $2$ or $3$. If there exists a component of order $1$ in $X_{11}-x_{11}$, then by the induction hypothesis, we have 
{\small\begin{align*}
\phi(X,x)&=\phi(X,x^0)+\phi(X,x^1)=\phi(X_{11},x_{11})+\phi(X,x^1x_1\bar{x}_{11})\\
&\leq
\begin{cases}
f_2(m-3)+ f(m-4), & m=10, \\
 [f_2(m-3)+1]+ f(m-4), & m\neq10.\\
\end{cases}\\
&\leq f_2(m-1)+1.
\end{align*}}
Specifically, if equality holds, then $m=10$.

If there exists a component with vertices $y,y_1,y_2,y_3$ in $X_{11}-x_{11}$, then $(X,x)$ is as shown in Fig.~\ref{27}. We have
\begin{align*}
\phi(X,x)&=\phi(X,x\bar{y})+\phi(X,xy^0)+\phi(X,xy^1)\\
&=\phi(X-\{y,y_1,y_2\},x)+0+[2\phi(X-\{y,y_1,y_2,y_3\},x)+\phi(X,xy^1y_3x_1\bar{y}_1\bar{y}_2\bar{x}_{11})]\\
&\leq[ f_2(m-4)+1]+0+\{2[f_2(m-5)+1]+f(m-7)\}\\
&<f_2(m-1)+1.
\end{align*}
The third inequality holds because $T-\{y,y_1,y_2\}$ and $T-\{y,y_1,y_2,y_3\}$ satisfy the conditions in the lemma.

If $X_{11} - x_{11}$ does not contain any components of order  $1$ or  $4$, we may assume that $Y$ is a component in $X_{11} - x_{11}$ of order at least  $5$. If $d(x_{11})=2$ and $m = 10$, then the structure of $T$ implies that $(X, x)$ is as shown in Fig.~\ref{L8_1}, and the result follows by direct computation.

Otherwise, by Theorems~\ref{S.ChengB.Wu} and~\ref{Wang,Zhang,Tu,Xiong}, we have
\begin{align*}
\phi(X,x)&=\phi(X,x^0)+\phi(X,x^1)=\phi(X_{11},x_{11})+\phi(Y)\phi(X-\{x,x_1,{x}_{11}\}\cup Y)\\
&\leq 
\begin{cases}
[f_2(m-3)+1]+g(m-3), & d(x_{11})=2,~m\neq10, \\
[f_2(m-3)+1]+g(|Y|)f_2(m-|Y|-3), &  d(x_{11})\geq3. \\
\end{cases}\\
&\leq 
\begin{cases}
[f_2(m-3)+1]+g(m-3), & d(x_{11})=2,~m\neq10, \\
[f_2(m-3)+1]+5f_2(m-8), & d(x_{11})\geq3,~|Y|= 5, \\
[f_2(m-3)+1]+\frac{15}{16}f_2(m-3), &  d(x_{11})\geq3,~|Y|\geq6. \\
\end{cases}\\
&<f_2(m-1)+1.
\end{align*}
The second inequality holds since $X-\{x,x_1,{x}_{11}\}\cup Y$ does not contain components of order $3$ or $4$, which yields $\phi(X-\{x,x_1,{x}_{11}\}\cup Y)<f(m-|Y|-3)$.

We complete the proof.
\end{proof}

\section{Proof of Theorem \ref{main}}
For clarity, we restate our results here.
\renewcommand{\thetheorem}{3}
\begin{theorem}
For any tree $T$ of order $n\geq3$, then $\phi(T)\leq g(n),$ where
{\small
\[
g(n) =
\begin{cases}
n,  &  n=3,4,5,6,\\
3^{\frac{n-1}{3}}+\frac{n-1}{3}, &  n \equiv 1 \pmod{3},~n\geq7,\\
4\cdot 3^{\frac{n-5}{3}}+n-5, &  n \equiv 2 \pmod{3},~n\geq8, \\
16\cdot 3^{\frac{n-9}{3}}+3n-25, &  n \equiv 0 \pmod{3},~n\geq12~\text{and }~n\neq21, \\
19, &  n=9, \\
1349, & n=21.
\end{cases}
\]
}
The extremal trees that achieve this largest number are illustrated in Figure \ref{fig2}. 
\end{theorem}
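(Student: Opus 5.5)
We argue by strong induction on $n$. Lemma \ref{tu0} gives $\phi(\widetilde{T})\geq g(n)$ for a tree $\widetilde{T}$ of order $n$ maximizing $\phi$, so it suffices to show the reverse inequality for such a $\widetilde{T}$. For $3\leq n\leq 9$ (and small cases up to $n=9$, where $g(9)=19$ is exceptional) we verify the bound by direct enumeration of trees; this also seeds the induction needed in Lemma \ref{tu3} and Lemma \ref{zw4}. Assume $n\geq10$ and that $\phi(S)\leq g(|S|)$ for every tree $S$ with $|S|<n$.

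\textbf{Normalizing the extremal tree.} Using Lemma \ref{tu3}, no vertex of $\widetilde{T}$ has a pendant $K_2$ component and every vertex has at most two leaf neighbours. Hence every support vertex has degree at least $3$ (degree $2$ support vertices would give a pendant $K_2$). Applying Lemma \ref{tu2} with $k=2$, a support vertex of degree $3$ with two leaf neighbours whose nonleaf neighbour $t$ is itself a support vertex could be moved to strictly increase $\phi$. Combined with Lemma \ref{tu1}, we aim to show that such a vertex has a nonleaf neighbour of degree $2$ or else we fall into an explicitly describable small family. Thus $\widetilde{T}$ satisfies the hypotheses of Lemma \ref{zw4}.

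\textbf{Edge splitting.} Suppose first that $\widetilde{T}$ has an edge $xy$ with both sides $X\ni x$, $Y\ni y$ of order at least $5$, $|X|=a$, $|Y|=b$, $a+b=n$. Conditioning on $x,y$ gives
\[
\phi(\widetilde{T})\leq \phi(X)\phi(Y)+\phi(X,x)\phi(Y,y),
\]
since sets avoiding the edge's interaction factor, while extra terms only arise when both endpoints are in $S$ or a missing endpoint must be dominated. We would bound $\phi(X),\phi(Y)$ by $g$ (induction), or better by $f_2$ since a tree with $\geq5$ vertices is not an extremal forest, and bound $\phi(X,x)$, $\phi(Y,y)$ by $f_2(a-1)+1$ via Lemma \ref{zw4}. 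Then we compare $f_2(a)f_2(b)+(f_2(a-1)+1)(f_2(b-1)+1)$ with $g(n)$ using Lemma \ref{1}(i),(ii). The equality cases $|X|\in\{6,7,10\}$ must be tracked separately to produce exactly the exceptional constants and the extremal trees.

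\textbf{Remaining trees.} If no such edge exists, every edge has a side of order at most $4$. By the normalization, $\widetilde{T}$ is a central vertex $v$ with pendant branches of order at most $4$: $P_3$'s attached at their centre via a leaf path, $K_{1,3}$-type branches, and at most two leaves. We compute $\phi$ explicitly as in the proof of Lemma \ref{tu0}, using the partition into $\bar v$, $v^0$ and $v^1$. This gives a closed formula in the numbers of each branch type. Maximizing over them yields $g(n)$: mostly $3$-branches, with one or two $4$-branches depending on $n \bmod 3$, and the anomalous optimum at $n=21$ with five branches. It also identifies the extremal trees of Figure \ref{fig2}.

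The main obstacle is the edge-splitting inequality. The bound must be sharp enough to beat $g(n)$ strictly except in the extremal configurations, and the residues $n\bmod3$ together with the sporadic values $9,21$ make the numerical comparison delicate. We would first try bounding crudely via $\frac{g}{f}\leq\frac{11}{12}$ and $\frac{f_2}{f_2}\leq\frac34$ from Lemma \ref{1}(ii), falling back to case checks for small $a,b$.
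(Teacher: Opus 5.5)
There is a genuine gap, and it lies in the edge-splitting step, which is where the theorem is hard.

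\textbf{The splitting inequality is false.} You claim $\phi(\widetilde{T})\leq \phi(X)\phi(Y)+\phi(X,x)\phi(Y,y)$. Suppose $x\in S$ and $y\notin S$. Then $y$ may be blocked only because $x$ already has degree $1$ in $S$, so $S\cap Y$ need not be maximal in $Y$. The exact size of this class is $\phi(X,x^1)\phi(Y-y)$ plus a term coming from $MD(X,x^0)$. The class with $x,y\in S$ contributes exactly $\phi(X-N_X[x])\,\phi(Y-N_Y[y])$. Neither quantity is controlled by your right-hand side.

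Already $P_4$ cut at its middle edge gives $3>1\cdot 1+1\cdot 1$. The failure persists with both sides large. Let $X$ be obtained from the spider with centre $w$ joined to the centres of two copies of $P_3$ by adding a vertex $x$ adjacent to $w$ and a leaf $x'$ adjacent to $x$. Then $|X|=9$, $\phi(X)=13$ and $\phi(X,x)=10$. Two copies of $X$ joined by the edge $xy$ have $310>13^2+10^2$ maximal dissociation sets.

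\textbf{No product bound of this shape can close the argument.} Suppose you bound the factors separately by $g$ and by $f_2(|X|-1)+1$. Note that $g(m)\leq f_2(m)$ for $m\geq 6$, so $f_2$ is the weaker bound, not the better one. Even so, the estimate asymptotically overshoots $g(n)$ by a constant factor for every residue pattern of $|X|,|Y|$ modulo $3$. For instance, $n=20$ with $|X|=|Y|=10$ gives $g(10)^2+(f_2(9)+1)^2=1341>987=g(20)$.

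Bounding each factor by its own extremal value loses too much; one must bound a correlated combination. This is why the paper uses no two-sided product bound. Instead, in Claim~\ref{zw3} it bounds $2\phi(X-x)+\phi(X-N[x])$ jointly by $h(m)$.

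\textbf{The normalization step is unjustified.} The lemmas of Wang et al.\ and Lemma~\ref{tu3} give you the following facts:
\begin{itemize}
\item every support vertex has degree at least $3$ and at most two leaf neighbours;
\item the nonleaf neighbour $x$ of a degree-$3$ vertex $u$ with two leaves is not itself a support vertex.
\end{itemize}
Nothing forces $d(x)=2$. For example, the spider whose centre is joined to the centres of $t\geq 3$ copies of $P_3$ has $3^t+t$ maximal dissociation sets, so it is extremal for $n=3t+1$. Yet it violates the hypothesis of Lemma~\ref{zw4}. Any tree containing such a configuration together with an edge whose sides both have order at least $5$ falls into your edge-splitting case, and there Lemma~\ref{zw4} is unavailable.

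This is not a small exceptional family; it is the paper's entire Case~1. There the paper expands at $u$ to obtain the exact identity $\phi(\widetilde{T})=\phi(X)+2\phi(X-x)+\phi(X-N_X[x])$. It then proves Claim~\ref{zw3} by its own induction, with the exceptional pairs $(T_i,r_i)$ and the separate bound $2552$ at $m=21$. Lemma~\ref{zw4} is used only in the complementary Case~2, where its hypotheses do hold, after expanding at a vertex carrying the maximum number of pendant $K_{1,3}$'s.

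Your direct computation for trees in which every edge has a side of order at most $4$ is reasonable, but it is the easy part of the argument.
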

\renewcommand{\thetheorem}

\textbf{Proof}. If $3\leq n\leq 6$, it is easy to prove that the result in Theorem \ref{main} holds true.
For $n\geq7$, we prove the theorem by induction on $n$.  Let $\widetilde{T}$ be a tree such that $\phi(\widetilde{T})=\max\{\phi({T}) : {T} \text{~is a tree and }|V({T})|=n\}$. Next, we prove that $\phi(\widetilde{T})\leq g(n)$. For $n=7,8,9$, it suffices by Lemmas~\ref{tu1} and~\ref{tu2} to verify Theorem~\ref{main} by a direct check of the following trees. By direct computation, we verify that Theorem \ref{main} holds for $n=7,8,9$.
\begin{figure}[htbp] 
    \centering
    \begin{minipage}[b]{0.55\textwidth} 
        \includegraphics[width=\linewidth]{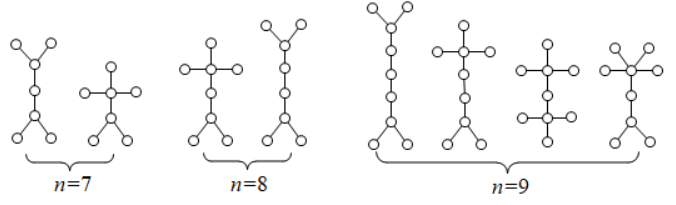} 
        \centering
    \end{minipage}
\end{figure}

Suppose that $n\geq10$ and that Theorem \ref{main} holds for trees of order at most $n-1$. Next, we prove that  Theorem \ref{main} holds for trees of order $n$. For $n \equiv 1 \pmod{3}$, the result follows directly from Theorem \ref{Wang,Zhang,Tu,Xiong}. Therefore, we consider only the case of $n \equiv 0,2 \pmod{3}$ here.

We now proceed by cases, depending on whether in the tree ${\widetilde{T}}$ there exists a vertex $u$ of degree 3 such that it has exactly two leaves and one nonleaf vertex with a degree of at least $3$. Therefore, the proof of Theorem \ref{main} is completed through the following two cases.

\noindent \textbf{Case 1.} There exists a vertex $u$ in the tree ${\widetilde{T}}$ such that $N(u)=\{x,y,z\},$ where $y$ and $z$ are leaves and $d_{{\widetilde{T}}}(x) \geq 3$.

Let $u\in V(\widetilde{T})$ such that $N(u)=\{x,y,z\},$ where $y$ and $z$ are leaves and $d_{{\widetilde{T}}}(x) \geq 3$. In fact, $x$ is not a support vertex of $\widetilde{T}$. Otherwise, suppose $x$ has a leaf neighbor $v$. By Lemma \ref{tu2}, we have $\phi(\widetilde{T})<\phi(\widetilde{T}-ux+uv)$, which contradicts $\phi(\widetilde{T})=\max\{\phi({T}) : {T} \text{~is a tree and }|V({T})|=n\}$. Let $X$ denote the component of ${\widetilde{T}}-u$ containing $x$. Recall that 
\begin{align}
\phi({\widetilde{T}})&=\phi({\widetilde{T}},\bar{u})+\phi(\widetilde{T},{u}^0)+\phi(\widetilde{T},{u}^1)\notag\\
&=\phi(X)+0+[\phi(\widetilde{T},{u}^1y)+\phi(\widetilde{T},{u}^1z)+\phi(\widetilde{T},{u}^1x)]\notag\\
&=\phi(X)+0+[2\phi(X-x)+\phi(X-N_X[x])].
\end{align}

In order to prove Case $1$, we proceed by proving the following Claim \ref{zw3}.
\begin{claim}\label{zw3}
Let $X$ be a tree of order $m$ (with $7 \leq m \leq n-3$), where every support vertex has a degree of at least $3$ and is adjacent to at most $2$ leaves, and let $x$ be a vertex of $X$ that is not a support vertex and has a degree of at least $2$. If $(X,x)$ is not equivalent to any of $(T_1,r_1)$ to $(T_4,r_4)$ depicted in Figure \ref{fig:T_1-T_4}, then the following inequality holds:

\[2\phi(X-x)+\phi(X-N[x])\leq h(m):
=\begin{cases}
2\cdot3^{\frac{m-1}{3}}+1 & \text{if } m \equiv 1 \pmod{3}, \\
8\cdot3^{\frac{m-5}{3}}+3 & \text{if } m \equiv 2 \pmod{3}, \\
32\cdot3^{\frac{m-9}{3}}+9 & \text{if } m \equiv 0 \pmod{3}.
\end{cases}\]
Equality holds if and only if the pair $(X,x)$ is as shown in Fig.~\ref{fig:iff-equality}. Moreover, for $m=21$, if $X$ is not as shown in Fig.~\ref{fig:iff-equality}, then $2\phi(X-x)+\phi(X-N[x])\leq2552$.

\begin{figure}[htbp]
    \centering 
    \includegraphics[width=0.63\textwidth]{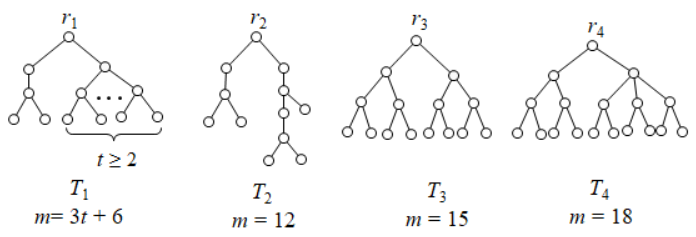}
    \caption{} 
    \label{fig:T_1-T_4} 
\end{figure}

\begin{figure}[htbp]
    \centering 
    \includegraphics[width=0.7\textwidth]{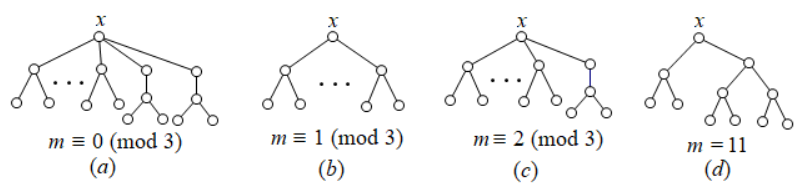}
    \caption{} 
    \label{fig:iff-equality} 
\end{figure}
\end{claim}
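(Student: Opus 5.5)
We would prove Claim \ref{zw3} by strong induction on $m$. The base range $7\le m\le 9$ (and any small orders where the exceptional pairs $(T_1,r_1)$–$(T_4,r_4)$ live) is checked directly: the degree and support-vertex constraints on $X$ leave only finitely many rooted trees, and the quantity $2\phi(X-x)+\phi(X-N[x])$ is computed for each. For $m\ge 10$ we write $N_X(x)=\{x_1,\dots,x_k\}$ with $k\ge2$, none of them a leaf since $x$ is not a support vertex, and let $X_i$ be the component of $X-x$ containing $x_i$. Then $X-x=\bigcup_i X_i$ and $X-N[x]=\bigcup_i (X_i-x_i)$, so
\[
2\phi(X-x)+\phi(X-N[x])=2\prod_{i=1}^k\phi(X_i)+\prod_{i=1}^k\phi(X_i-x_i).
\]
This product structure drives the whole argument.

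First we dispose of the generic case with the forest bounds. Theorem~\ref{S.ChengB.Wu} gives $\phi(X_i-x_i)\le f(|X_i|-1)$. Each $X_i$ is a tree of order at least $3$, and since $d(x_i)\ge2$ and leaves hang only on supports of degree $\ge3$, we have $|X_i|\ge 4$. So $\phi(X_i)\le g(|X_i|)$ by the outer induction hypothesis of Theorem~\ref{main}, which is valid because $|X_i|<n$. We then use the ratios of Lemma~\ref{1}(ii): $g(t)\le\frac{11}{12}f(t)$, together with supermultiplicativity of $f$ (a disjoint union of optimal forests is a forest). This gives $\prod\phi(X_i)\le (\frac{11}{12})^k f(m-1)$, and when some $X_i-x_i$ is not extremal we also get $\prod\phi(X_i-x_i)\le f_2(m-1-k)$ by Lemma~\ref{1}(i). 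Comparing with $h(m)$, which is roughly $2\cdot\frac34 f(m-1)$ in the appropriate residue class, we expect slack as soon as $k\ge3$, or $k=2$ with both $|X_i|$ large. We would organise the cases by $k$ and the residues of $|X_i|$ modulo $3$, exactly as $F(k)$ and $F_2(k)$ were shown to be monotone in Lemma~\ref{zw4}, so that only $k=2$ with small or special $X_i$ remains.

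For the remaining tight configurations we need exact values rather than product bounds. These are the cases where some $X_i-x_i$ attains $f$ (forcing $X_i-x_i\cong aP_3\cup bK_{1,3}$ and hence pinning the shape of $X_i$), or where $X_i$ is itself a $g$-extremal tree. Here we would expand $\phi(X_i)$ according to whether $x_i$ is in the set, in the style of the computation in Lemma~\ref{tu0}. We then compare term by term with $h(m)$. This is where the equality trees of Fig.~\ref{fig:iff-equality} emerge, typically $x$ joined to several $P_3$-type branches through a degree-$2$ vertex, so that $X-x$ consists of stars $K_{1,3}$ and $X-N[x]$ of $P_3$'s. This also pins down the excluded pairs $(T_1,r_1)$–$(T_4,r_4)$, where the inequality genuinely fails. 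The equality characterisation follows by tracking which inequalities were tight.

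The main obstacle will be the residue-$0$ case, and in particular $m=21$. There $g(21)=1349$ breaks the formula pattern, so the ratio estimates from Lemma~\ref{1}(ii) are not uniform. A branch $X_i$ of order $21$ or $20$ must be handled using the actual value $1349$. To prove the refined bound $2552$ for non-extremal $X$ of order $21$, we would enumerate the admissible $k=2$ splittings $|X_1|+|X_2|=20$ and bound each using either exact computations for small branches or Lemma~\ref{zw4}. Lemma~\ref{zw4} applies when the edge $xx_i$ separates two parts of order at least $5$, and it controls $\phi(X_i,x_i)$ by $f_2(|X_i|-1)+1$; this gives the needed second-order saving.
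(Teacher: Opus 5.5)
Your plan has a genuine gap in the ``generic case'', which rests on two false premises. First, $|X_i|\ge4$ does not follow from the hypotheses. $X_i\cong P_3$ attached to $x$ at its centre is admissible, since $x_i$ is then a support vertex of degree $3$ with two leaves, and this branch occurs in every equality configuration. Second, $h(m)$ is not roughly $\frac32 f(m-1)$: in fact $h(m)=2f(m-1)+c_m$ with $c_m=1,3,9$ for $m\equiv1,2,0\pmod 3$. Since $2\phi(X-x)\le 2f(m-1)$ trivially, the only room available comes from how far $\phi(X-x)$ falls below $f(m-1)$. Moreover, tight configurations exist for every $k$. Joining $x$ to the centres of $k$ copies of $P_3$ gives $2\cdot3^k+1=h(3k+1)$, and replacing one or two of these branches by $K_{1,3}$'s attached through a leaf gives equality in the other residue classes. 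Consequently:
\begin{itemize}
\item there is no slack for $k\ge3$;
\item no monotonicity-in-$k$ argument can reduce matters to $k=2$;
\item the ratio $g\le\frac{11}{12}f$ of Lemma~\ref{1}(ii), valid only for orders $\ge6$, $\ne8$, is unavailable exactly on the $P_3$ and $K_{1,3}$ branches, where $\phi(X_i)=f(|X_i|)$.
\end{itemize}
Your plan has no mechanism for trees with many tight small branches mixed with arbitrary ones.

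The missing idea is the use of the Claim's own induction hypothesis. You announce strong induction on $m$ but never apply the Claim to a smaller rooted tree. For $k\ge3$, the paper removes the smallest branch $X_1$, with $|X_1|=\lambda$. Then $X'=X-X_1$, rooted at $x$, still satisfies the hypotheses because $d_{X'}(x)=k-1\ge2$, and
\[2\phi(X-x)+\phi(X-N[x])=2\bigl[\phi(X_1)-\phi(X_1-x_1)\bigr]\phi(X'-x)+\phi(X_1-x_1)\bigl[2\phi(X'-x)+\phi(X'-N[x])\bigr].\]
This is at most $2[\phi(X_1)-\phi(X_1-x_1)]f(m-\lambda-1)+\phi(X_1-x_1)h(m-\lambda)$. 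The cases then go as follows:
\begin{itemize}
\item For $\lambda=3$ the bound equals $4f(m-4)+h(m-3)=h(m)$. So tight $P_3$ branches propagate exactly, and equality is pinned down by $\lambda=3$ and $\phi(X'-x)=f(m-4)$.
\item For $\lambda=4,5,6$ the paper uses the explicit admissible branches.
\item For $\lambda\ge7$ it uses $\phi(X_1)\le g(\lambda)$, $\phi(X_1-x_1)\le f(\lambda-1)$ and $h(m-\lambda)-2f(m-\lambda-1)=c_{m-\lambda}$.
\end{itemize}
The recursion also forces a separate check when $(X-X_1,x)$ is one of the excluded $(T_i,r_i)$.

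Only for $k=2$ does the paper argue as you propose, via $2\phi(X_1)g(m-\lambda-1)+\phi(X_1-x_1)f(m-\lambda-2)$, and there the exact values of $g$ are needed, not the ratio $\frac{11}{12}$.

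Two further points. Lemma~\ref{zw4} cannot be invoked inside the Claim. It assumes every degree-$3$ support vertex with two leaf neighbours has a nonleaf neighbour of degree $2$, and the trees here, including the equality trees, violate this. Also, at $m=21$ every branch has order at most $17$, so $g(21)$ plays no role there. The bound $2552$ comes from the same case split, using the fact that the equality structure is excluded.
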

\begin{proof}[Proof of Claim~\ref{zw3}]
We prove Claim \ref{zw3} by induction on $m$. For $7 \leq m \leq 9$, all the trees satisfying the conditions of the Claim are listed in Figure~\ref{116}. Direct computation and verification of the equality case confirm that the result holds.

Suppose that the result holds for $10\leq |V(X)|\leq m-1$. We now prove this for $|V(X)|= m.$
 Let $N_X(x) = \{x_1, \dots, x_k\}$ and denote by $X_i$ the component of $X - x$ containing $x_i$ ($1\leq i\leq k$, $k\geq2$). Without loss of generality, assume that $X_1, \dots, X_k$ are ordered so that $|V(X_1)| \leq |V(X_2)| \leq \cdots \leq |V(X_k)|$.
\begin{figure}[htbp]
    \centering 
    \includegraphics[width=0.85\textwidth]{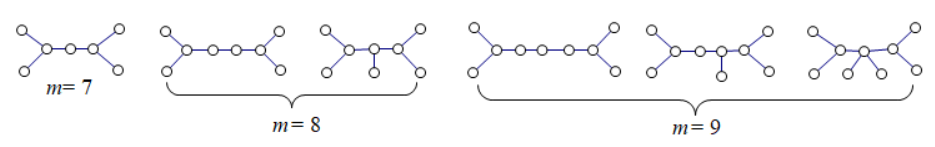}
    \caption{} 
    \label{116} 
\end{figure}

If $(X- X_1, x)$ is equivalent to one of the pairs in Fig.~\ref{fig:T_1-T_4}, we will show that the inequality $2\phi(X- x) + \phi(X- N[x]) < h(m)$ always holds. For example, if $(X - X_1, x)\sim (X_1, r_1)$, then $|V(X_2)|=4$. Moreover, we know that $X_1\cong K_1,P_3,K_{1,3}$ for $|V(X_1)|\leq|V(X_2)|=4$. By direct comparison, 
{\small
\begin{align*}
2\phi(X-x)+\phi(X-N[x])&=2\phi(X_1)\phi(X_2)\phi(X_3)+\phi(X_1-x_1)\phi(X_2-x_2)\phi(X_3-x_3)\\
&=2\phi(X_1)\cdot4\cdot\phi(X_3)+\phi(X_1-x_1)\cdot3\cdot\phi(X_3-x_3)\\
&=
\begin{cases}
2\cdot1\cdot4\cdot(3^{t}+t)+1\cdot3\cdot3^{t}, &X_{1}\cong K_1,~t=\frac{m-7}{3}, \\
2\cdot3\cdot4\cdot(3^{t}+t)+1\cdot3\cdot3^{t}, &X_{1}\cong P_3,~t=\frac{m-9}{3},\\
2\cdot4\cdot4\cdot(3^{t}+t)+3\cdot3\cdot3^{t}, & X_{1}\cong K_{1,3},~t=\frac{m-10}{3}, \\
\end{cases}\\
&<h(m),
\end{align*}
}
the second equality follows from Lemma \ref{tu0}. Specifically, for $m=21$, we have $X_1\cong P_3$ and $t=4$; hence, $2\phi(X-x)+\phi(X-N[x])\leq 339<2552$. All the other remaining cases can be analyzed and computed similarly. Therefore, we may assume that $(X - X_1, x)$ is not equivalent to any of $(X_1, r_1),\dots,(X_4, r_4)$. Let $\lambda=|V(X_1)|$. Since $x$ is not a support vertex of $X$, it follows that $\lambda\geq2$. We discuss the results according to the cases below.

\noindent \textbf{Case 1.} $\lambda=2$.

If $\lambda=2$, then $X_1\cong K_2$. Hence, $x_1$ is a support vertex of degree $2$ in $X$, contradicting the requirement that every support vertex of $X$ has a degree of at least $3$.

\noindent \textbf{Case 2.} $\lambda\geq3$.

\noindent \textbf{Subcase 2.1.} $k\geq3$.

Let $X'=X-X_1$ where $X'$ satisfies the conditions of Claim \ref{zw3}. Note that $\phi(X'-x)\leq f(m-\lambda-1)$ by Theorem \ref{S.ChengB.Wu}. By the induction hypothesis, $2\phi(X'-x)+\phi(X'-N[x])\leq h(m-\lambda)$. Thus, 
\begin{align}
&2\phi(X-x)+\phi(X-N[x])\notag\\
=&2\phi(X_1)\cdot\phi(X'-x)+\phi(X_1-x_1)\cdot\phi(X'-N[x])\notag\\
=&2\cdot[\phi(X_1)-\phi(X_1-x_1)]\cdot\phi(X'-x)+\phi(X_1-x_1)\cdot[2\phi(X'-x)+\phi(X'-N[x])]\\
\leq&2\cdot[\phi(X_1)-\phi(X_1-x_1)]\cdot f(m-\lambda-1)+\phi(X_1-x_1)\cdot h(m-\lambda).
\end{align}

\begin{figure}[htbp]
\centering
\begin{minipage}[t]{0.5\textwidth}
\includegraphics[width=\textwidth]{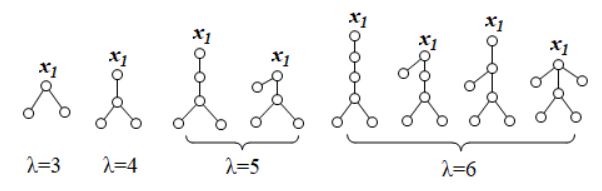}
    \caption{} 
    \label{L7-2} 
\end{minipage}
\end{figure}

For $\lambda=3,4,5,6$, from the structural conditions of $X$, we conclude that $(X_1, x_1)$ is as shown in Fig.~\ref{L7-2}. By direct comparison, we have
\begin{center}
\begin{minipage}{0.45\linewidth}
\[
\phi(X_1)\leq 
\begin{cases}
3, &\lambda=3, \\
4, &\lambda=4,\\
4, & \lambda=5, \\
6, & \lambda=6.
\end{cases}
\]
\end{minipage}
\hspace{-1cm}
\text{and}
\hspace{-1cm}
\begin{minipage}{0.45\linewidth}
\[
\phi(X_1-x_1)\leq
\begin{cases}
1, &\lambda=3, \\
3, &\lambda=4,\\
4, & \lambda=5, \\
4, & \lambda=6. 
\end{cases}
\tag{*}
\]
\end{minipage}

\end{center}
 Therefore, by $(5)$, $(6)$,
{\small
\begin{align*}
2\phi(X-x)+\phi(X-N[x])=&2\cdot[\phi(X_1)-\phi(X_1-x_1)]\cdot\phi(X'-x)+\phi(X_1-x_1)\cdot[2\phi(X'-x)+\phi(X'-N[x])]\\
\leq&2\cdot[\phi(X_1)-\phi(X_1-x_1)]\cdot f(m-\lambda-1)+\phi(X_1-x_1)\cdot h(m-\lambda)\\
\leq&
\begin{cases}
2\cdot(3-1)f(m-4)+1\cdot h(m-3), &\lambda=3, \\
2\cdot(4-3)f(m-5)+3\cdot h(m-4), &\lambda=4,\\
2\cdot(4-4)f(m-6)+4\cdot h(m-5), & \lambda=5, \\
2\cdot(6-4)f(m-7)+4\cdot h(m-6), & \lambda=6, 
\end{cases}\\
\leq& h(m).
\end{align*}
}
Equality holds if and only if $\lambda=3$ and $\phi(X'-x)=f(m-4)$, in which case one can verify that $(X,x)$ is as depicted in Fig.~\ref{fig:iff-equality}~$(a)(b)(c)$.

 For $m=21$, if the inequality holds strictly, that is, if either $\lambda=3$ and $\phi(X'-x)\neq f(m-4)$ or $\lambda>3$, then 
{\small\begin{align*}
2\phi(X-x)+\phi(X-N[x])
&\leq
\begin{cases}
2\cdot(3-1)f_2(17)+1\cdot h(18), &\lambda=3, \\
2\cdot(4-3)f(16)+3\cdot h(17), &\lambda=4,\\
2\cdot(4-4)f(15)+4\cdot h(16), & \lambda=5, \\
2\cdot(6-4)f(14)+4\cdot h(15), & \lambda=6, 
\end{cases}\\
&\leq 2552.
\end{align*}}
~~~Now, consider $\lambda \geq 7$. Note that $m\geq2\lambda+8$ since $m- \lambda-1\geq2|V(X_1)|=2\lambda$. By definition, we have 
\begin{align*}
h(m-\lambda)
&=
\begin{cases}
2\cdot3^{\frac{m-\lambda-1}{3}}+1=2\cdot f(m-\lambda-1)+1& \text{if } m-\lambda \equiv 1 \pmod{3}, \\
8\cdot3^{\frac{m-\lambda-5}{3}}+3=2\cdot f(m-\lambda-1)+3 & \text{if } m-\lambda \equiv 2 \pmod{3}, \\
32\cdot3^{\frac{m-\lambda-9}{3}}+9=2\cdot f(m-\lambda-1)+9 & \text{if } m-\lambda \equiv 0 \pmod{3}.
\end{cases}
\end{align*}
Note that $\phi(X_1) \leq g(\lambda)$ by induction and $\phi(X_1-x_1) \leq f(\lambda-1)$  by Theorem \ref{S.ChengB.Wu}. 
From $(6)$, 
\begin{align*}
2\phi(X-x)+\phi(X-N[x])\leq&2\cdot[\phi(X_1)-\phi(X_1-x_1)]\cdot f(m-\lambda-1)+\phi(X_1-x_1)\cdot h(m-\lambda)\\
\leq&2\phi(X_1)\cdot f(m-\lambda-1)+[h(m-\lambda)-2f(m-\lambda-1)]\cdot \phi(X_1-x_1)\\
\leq&2g(\lambda)\cdot f(m-\lambda-1)+[h(m-\lambda)-2f(m-\lambda-1)]\cdot f(\lambda-1)\\
=&
\begin{cases}
32g(\lambda)\cdot 3^{\frac{m-\lambda-9}{3}}+9f(\lambda-1)(=: A_0(m,\lambda)) & \text{if } m-\lambda \equiv 0 \pmod{3},\\
2g(\lambda)\cdot 3^{\frac{m-\lambda-1}{3}}+f(\lambda-1)(=: A_1(m,\lambda))& \text{if } m-\lambda \equiv 1 \pmod{3}, \\
8g(\lambda)\cdot 3^{\frac{m-\lambda-5}{3}}+3f(\lambda-1)(=: A_2(m,\lambda)) & \text{if } m-\lambda \equiv 2 \pmod{3}. 
\end{cases}
\end{align*}
 We shall prove that $A_i(m,\lambda)<h(m)$ for $i=0,1,2$.

We prove that $A_0(m,\lambda)<h(m)$ in detail. For $\lambda\geq7$ with $\lambda\neq9$,
{\small\begin{align*}
A_0(m,\lambda)-A_0(m,\lambda+3)&=32g(\lambda)\cdot 3^{\frac{m-\lambda-9}{3}}+9f(\lambda-1)-32g(\lambda+3)\cdot 3^{\frac{m-(\lambda+3)-9}{3}}-9f(\lambda+3-1)\\
&=32\cdot3^{\frac{m-\lambda-9}{3}}[g(\lambda)-\frac{1}{3}g(\lambda+3)]+9[f(\lambda-1)-f(\lambda+2)]\\
&\geq32\cdot3^{\frac{m-\lambda-9}{3}}-18\cdot3^{\frac{\lambda-1}{3}}\\
&\geq0.
\end{align*}}
Here, the second inequality holds since $g(\lambda)-\frac{1}{3}g(\lambda+3)\geq 1$ and $f(\lambda-1)-f(\lambda+2)\geq-2\cdot3^{\frac{\lambda-1}{3}}$, while the third inequality uses the condition $m\geq2\lambda+8$.

Therefore, 
\begin{align*}
A_0(m,\lambda)
&\leq
\begin{cases}
A_0(m,7)& \text{if } \lambda \equiv 1 \pmod{3} \\
A_0(m,8)& \text{if } \lambda \equiv 2 \pmod{3} \\
\max\{A_0(m,9),~ A_0(m,12)\}& \text{if } \lambda \equiv 0 \pmod{3}\\
\end{cases}
<h(m).
\end{align*}
 Specifically, $2\phi(X-x)+\phi(X-N[x])\leq \max\{A_0(21,9),~ A_0(21,12)\}= 2320<2552$ for $m=21$. 

The inequality $A_i(m,\lambda)<h(m)$ for $i=1,2$ can be proven by a similar discussion as above.

\noindent \textbf{Subcase 2.2.} $k=2$.

 Note that 
$2\phi(X-x)+\phi(X-N[x])=2\phi(X_1)\cdot\phi(X_2)+\phi(X_1-x_1)\cdot\phi(X_2-x_2)\leq 2\phi(X_1)g(m-\lambda-1)+\phi(X_1-x_1)\cdot f(m-\lambda-2)$ by induction and Theorem \ref{S.ChengB.Wu}.

\noindent \textbf{Subcase 2.2.1.} $\lambda=3,4,5,6$.

First, we discuss the case $\lambda=3,4,5,6$. From $(*)$ above, we have
{\small
\begin{align*}
2\phi(X-x)+\phi(X-N[x])
&\leq
\begin{cases}
2\cdot3\cdot g(m-4)+1\cdot f(m-5), &\lambda=3, \\
2\cdot4\cdot g(m-5)+3\cdot f(m-6), &\lambda=4,~m \equiv 1 \text{ or } 2 \pmod{3},\\
2\cdot4\cdot g(m-6)+4\cdot f(m-7), & \lambda=5, \\ 
2\cdot6\cdot g(m-7)+4\cdot f(m-8), & \lambda=6, 
\end{cases}\\
&\leq h(m).
\end{align*}
}
Equality holds if and only if $\lambda=3$, $m=11$ and $X_2-x_2\cong 2P_3$, as shown in Figure~\ref{fig:iff-equality}~$(d)$.
\begin{figure}[htbp]
\centering
\begin{minipage}[t]{0.5\textwidth}
\includegraphics[width=\textwidth]{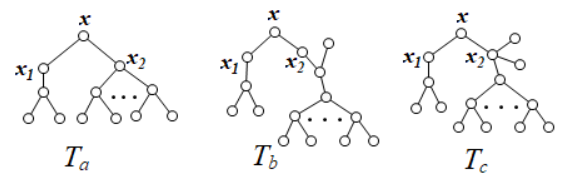}
    \caption{} 
    \label{T1-5} 
\end{minipage}
\end{figure}

If $\lambda = 4$ and $m \equiv 0 \pmod{3}$, then we have $X_2 - x_2 \not\cong \displaystyle\frac{m-6}{3} P_3$ owing to $(X,x) \not\sim (X_1,r_1)$. Consequently, by Theorems~\ref{S.ChengB.Wu} and~\ref{Wang,Zhang,Tu,Xiong}, it follows that $\phi(X_2 - x_2) \leq f_2(m-6)$. Furthermore, $\phi(X_2) = g(m-5)$ if and only if $X_2$ is the graph  shown in Fig.~\ref{fig2}, which means that $(X,x)$ is as shown in Fig.~\ref{T1-5} by induction. 
If $(X,x) \sim (X_a,x)$, this contradicts $(X,x) \not\sim (X_1,r_1)$. If $(X,x) \sim (X_b,x)$ for $m = 12$, this contradicts $(X,x) \not\sim (X_2,r_2)$. If $(X,x)\sim(X_b,x)$ for $m\neq12$, then $\phi(X_2 - x_2) < g(m-6)$ by induction. If $(X,x)\sim(X_c,x)$, then $\phi(X_2 - x_2) \leq g(m-8)$.  Hence,
{\small\begin{align*}
2\phi(X-x)+\phi(X-N[x])&=2\cdot4\cdot \phi(X_2)+3\cdot \phi(X_2-x_2)\\
&\leq
\begin{cases}
8g(m-5)+3[g(m-6)-1], &(X,x)\sim(X_b,x)~\text{for}~ m\neq12,\\
8g(m-5)+3g(m-8), &(X,x)\sim(X_c,x) , \\
8[g(m-5)-1]+3f_2(m-6), &\text{otherwise}. \\ 
\end{cases}\\
&\leq h(m).
\end{align*}}
~~~Specifically,  for $m=21$,
{\small
\begin{align*}
&2\phi(X-x)+\phi(X-N[x])\\
&\leq
\begin{cases}
2\cdot3\cdot g(17)+1\cdot f(16), &\lambda=3, \\
 \max\{8g(16)+3[g(15)-1],~8g(16)+3g(13),~8[g(16)-1]+3f_2(15)\},  &\lambda=4,\\
2\cdot4\cdot g(15)+4\cdot f(14), & \lambda=5, \\
2\cdot6\cdot g(14)+4\cdot f(13), & \lambda=6, 
\end{cases}\\
&\leq 2552.
\end{align*}}
\noindent \textbf{Subcase 2.2.2.} $\lambda\geq 7$.

 Note that $2\phi(X-x)+\phi(X-N[x])=2\phi(X_1)\phi(X_2)+\phi(X_1-x_1)\phi(X_2-x_2) \leq2g(\lambda)\cdot g(m-\lambda-1)+f(\lambda-1)\cdot f(m-\lambda-2)$ by induction and Theorem \ref{S.ChengB.Wu}. 

We prove the result according to different values of $m$ and $\lambda$ after modulo $3$. We prove the case $m \equiv 0 \pmod{3}$ and $\lambda  \equiv 0 \pmod{3}$ in detail. Other cases can be proven by a similar discussion. 

If $m \equiv 0 \pmod{3}$ and $\lambda  \equiv 0 \pmod{3}$, then $\lambda\geq9$ and $m\geq 21$. We now prove that 
$2g(\lambda)\cdot g(m-\lambda-1)+f(\lambda-1)\cdot f(m-\lambda-2)<h(m)$.

If $\lambda=9$ or $\lambda=21$, we have 
{\small
\begin{align*}
2g(\lambda)\cdot g(m-\lambda-1)+f(\lambda-1)\cdot f(m-\lambda-2)&=
\begin{cases}
2g(9)\cdot g(m-10)+f(8)\cdot f(m-11), &\lambda=9, \\
2g(21)\cdot g(m-22)+f(20)\cdot f(m-22), & \lambda=21, 
\end{cases}\\
&\leq 2552.
\end{align*}
}
 Specifically, $2\phi(X-x)+\phi(X-N[x])\leq 2172<2552$ for $m=21$.

Otherwise, $\lambda\geq12$ and $\lambda\neq21$. The inequalities $\lambda = |V(X_1)| \leq |V(X_2)| \leq m - \lambda - 1 \leq m - 13$, with $m, \lambda \equiv 0 \pmod{3}$, yield $\lambda \leq m - 15$. Note that 
 {\small\begin{align}
&2g(\lambda)\cdot g(m-\lambda-1)+f(\lambda-1)\cdot f(m-\lambda-2)\notag\\
=&2\cdot(16\cdot 3^{\frac{\lambda-9}{3}}+3\lambda-25)\cdot(4\cdot 3^{\frac{m-\lambda-6}{3}}+m-\lambda-6)+16\cdot 3^{\frac{\lambda-9}{3}}\cdot4\cdot 3^{\frac{m-\lambda-6}{3}}\notag\\
=&192\cdot3^{\frac{m-15}{3}}+(32m-32\lambda-192)\cdot3^{\frac{\lambda-9}{3}}+(24\lambda-200)\cdot3^{\frac{m-\lambda-6}{3}}+(6\lambda-50)\cdot(m-\lambda-6).
\end{align}}
 Let $B_1(m,\lambda)=(32m-32\lambda-192)\cdot3^{\frac{\lambda-9}{3}}$, $B_2(m,\lambda)=(24\lambda-200)\cdot3^{\frac{m-\lambda-6}{3}}$ and  $B_3(m,\lambda)=(6\lambda-50)\cdot(m-\lambda-6).$
We first prove that $B_1(m,\lambda)\leq B_1(m,m-15)$. This is because
\begin{align*}
B_1(m,\lambda+3)-B_1(m,\lambda)&=[32m-32(\lambda+3)-192]\cdot3^{\frac{\lambda+3-9}{3}}-(32m-32\lambda-192)\cdot3^{\frac{\lambda-9}{3}}\\
&=(64m-64\lambda-672)\cdot3^{\frac{\lambda-9}{3}}>0.
\end{align*}
Here, the last inequality holds since $\lambda\leq m-15$.

Similarly, we prove that $B_2(m,\lambda)\leq B_2(m,12)$.  This is because
\begin{align*}
B_2(m,\lambda)-B_2(m,\lambda+3)
&=(24\lambda-200)\cdot3^{\frac{m-\lambda-6}{3}}-[24(\lambda+3)-200]\cdot3^{\frac{m-\lambda-3-6}{3}}\\
&=(16\lambda-\frac{472}{3})\cdot3^{\frac{m-\lambda-6}{3}}>0.
\end{align*}
Here, the last inequality holds since $\lambda\geq 12$.

By the elementary inequality, we have $B_3(m,\lambda)\leq \displaystyle\frac{9m^2-258m+1849}{6}$. 

Hence, from $(7)$, we have
{\small\begin{align*}
&2g(\lambda)\cdot g(m-\lambda-1)+f(\lambda-1)\cdot f(m-\lambda-2)\\
=&192\cdot3^{\frac{m-15}{3}}+(32m-32\lambda-192)\cdot3^{\frac{\lambda-9}{3}}+(24\lambda-200)\cdot3^{\frac{m-\lambda-6}{3}}+(6\lambda-50)\cdot(m-\lambda-6)\\
\leq& 192\cdot3^{\frac{m-15}{3}}+[32m-32(m-15)-192]\cdot3^{\frac{m-15-9}{3}}+(24\cdot12-200)\cdot3^{\frac{m-12-6}{3}}+\frac{9m^2-258m+1849}{6}\\
<&h(m).
\end{align*}}
~~~Consequently, $2\phi(X-x)+\phi(X-N[x])\leq h(m)$, and the claim is true. 
\end{proof}
We proceed to prove Case~$1.$

If $(X,x)\sim (T_i,r_i)$ (see Figure~\ref{fig:T_1-T_4}) for each $1\leq i\leq4$, then $\widetilde{T}$ is as shown in Fig.~\ref{T2-4}. We will prove $\phi({\widetilde{T}})<g(n)$. For example, if $(X,x)\sim (T_1,r_1)$, then $\widetilde{T}$ is as shown in Fig.~\ref{T2-4}~$(a)$. From $(4)$, we have 
{\small\begin{align*}
\phi({\widetilde{T}})&=\phi(X)+2\phi(X-x)+\phi(X-N_X[x])\\
&=[\phi(X,\bar{v}_2)+\phi(X,{v_2}^0)+\phi(X,{v_2}^1)]+2\phi(X-x)+\phi(X-N_X[x])\\
&=\big\{[\phi({X}-{v_2})-\phi({X}-{v_2},\bar{x}\bar{u}_1\cdots\bar{u}_t)-\sum\limits_{i=1}^{t}\phi({X}-{v_2},\bar{x}\bar{u}_1\cdots\bar{u}_{i-1}{{u}_i}^0\bar{u}_{i+1}\cdots\bar{u}_{t})\\
&-\phi({X}-{v_2},{{x}}^0\bar{u}_1\cdots\bar{u}_{t})]+\phi(X,{v_2}^0\bar{x}\bar{u}_1\cdots\bar{u}_t)+\sum\limits_{i=1}^{t}\phi({X},{v_2}^1{{u}_i}\bar{x}\bar{u}_1\cdots\bar{u}_{i-1}\bar{u}_{i+1}\cdots\bar{u}_{t})\\
&+\phi(X,{v_2}^1{{x}}\bar{u}_1\cdots\bar{u}_{t})\big\}+2\phi(X-x)+\phi(X-N_X[x])\\
&=(4\cdot 3^t-1-0-2)+2+(4t+3)+[11\cdot3^{\frac{n-9}{3}}+\frac{8(n-9)}{3}] \\
&=15\cdot 3^{\frac{n-9}{3}}+4n-34<g(n),
\end{align*}}
the fifth equality holds for $t=\frac{n-9}{3}.$ Other cases can be proven by a similar discussion. 
\begin{figure}[htbp]
    \centering 
    \includegraphics[width=0.6\textwidth]{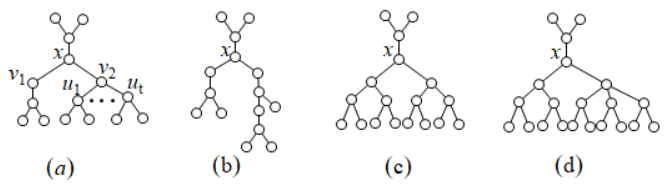}
    \caption{} 
    \label{T2-4} 
\end{figure}

Next, we consider $(X,x)\not\sim (T_i,r_i)$ for each $1\leq i\leq4$. By Lemma \ref{tu3}, we know that every support vertex of $\widetilde{T}$ has a degree of at least $3$ and is adjacent to at most $2$ leaves. Note that $d_X(v)=d_{\widetilde{T}}(v)$ for any support vertex $v$ of $X$. Then, $X$ is a tree satisfying the conditions of Claim \ref{zw3}. Since $x$ is  not a support vertex of $X$ and $d_X(x)\geq2$, by Claim \ref{zw3}, we have $2\phi(X-x)+\phi(X-N_X[x])\leq h(n-3)$. 
\begin{figure}[htbp]
    \centering 
    \includegraphics[width=0.9\textwidth]{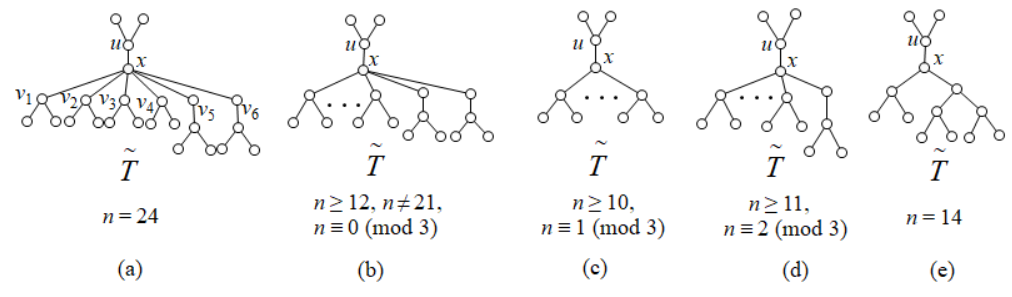}
    \caption{} 
    \label{118} 
\end{figure}

If $n = 12$, then $|V(X)| = 9$. Owing to the structure of $\widetilde{T}$, $X$ has no vertex with three leaves. Then, $\phi(X) < g(9)$ by induction. Therefore,
$\phi(\widetilde{T}) = \phi(X) +[ 2\phi(X - x) + \phi(X - N_X[x])] \leq (g(9) - 1) + h(9) = g(12)$.
Equality holds if and only if $2\phi(X - x) + \phi(X - N_X[x]) = h(9)$, which implies that $(X,x)$ is equivalent to Fig.~\ref{fig:iff-equality}$(c)$ and that $\widetilde{T}$ is isomorphic to Fig.~\ref{118}~$(c)$.

Next, we consider $n=24$ and $2\phi(X-x)+\phi(X-N_X[x])=h(21)$. Then, by Claim \ref{zw3}, ${\widetilde{T}}$ is as shown in Fig.~\ref{118}~$(a)$,  where $N_X(x)=\{v_1,v_2,v_3,v_4,v_5,v_6\}$. Hence,
\begin{align*}
\phi({\widetilde{T}})&=\phi(X)+h(21)\\
&= [\phi({X},\bar{x})+\phi({X},{x}^0)+\phi({X},{x}^1)]+h(21)\\
&= \big\{[\phi({X}-{x})-\phi({X}-{x},\bar{v}_1\cdots\bar{v}_6)-\sum\limits_{i=1}^{6}\phi({X}-{x},\bar{v}_1\cdots\bar{v}_{i-1}{{v}_i}^0\bar{v}_{i+1}\cdots\bar{v}_{6})]\\
&+\phi({X},{x}^0)+\sum\limits_{i=1}^{6}\phi({X},xv_i)\big\}+h(21)\\
&=[(3^4\cdot4^2-2^2-2\cdot2)+2^2+4\cdot3^2+2\cdot3]+h(21)\\
&=g(24)
\end{align*}
~~~Otherwise, we have 
{\small\begin{align*}
\phi({\widetilde{T}})&=\phi(X)+2\phi(X-x)+\phi(X-N_X[x])\\
&\leq
\begin{cases}
g(21)+2552, & n=24 ~\text{and}~ 2\phi(X-x)+\phi(X-N_X[x])<h(21),  \\
 g(n-3)+h(n-3), & n\neq24.\\
\end{cases}\\
&\leq g(n).
\end{align*}}
The equality holds if and only if $n\neq21$, $\phi(X)=g(n-3)$ and $2\phi(X-x)+\phi(X-N_X[x])=h(n-3)$, which implies that $\widetilde{T}$ is isomorphic to the tree in Fig.~\ref{118}~$(b)$,$(c)$,$(d)$,$(e)$.

\noindent \textbf{Case 2.} Every vertex with degree $3$ and exactly two leaf-neighbors has its nonleaf neighbor of degree at most $2$ in the tree $\widetilde{T}$.

Let $P =v_1v_2\cdots v_k$ be the longest path in $\widetilde{T}$. From the fact that $\widetilde{T}$ is not isomorphic to a star, we conclude that $k\geq4$. By Lemma \ref{tu3}, $\widetilde{T}-v_2$ has at most two leaf-neighbors. Thus, $d(v_2)\leq3$ by the choice of $P$. Moreover, $v_1v_2$ is not a component of $\widetilde{T}-v_3$  by Lemma \ref{tu3}. Thus, $d(v_2)\geq3$. Hence, $d(v_2)=3$. By the same proof, we have $d(v_{k-1})=3$. It must be that $k\geq5$; otherwise, $k=4$ forces $n=6$, contradicting the fact that $n \geq10$. We now know that both $v_2$ and $v_{k-1}$  are vertices of degree $3$ with exactly two leaf neighbors. On the basis of the conditions of Case $2$, we have $d(v_3)=d(v_{k-2})=2$. If $k=5$ or $6$, then $n=7~\text{or}~8$, which contradicts $n \geq10$. Hence, $k\geq7$, which implies that the diameter of $\widetilde{T}$ is at least $6$.

In $\widetilde{T}-v_4$, by the choice of $P$, there is at most one component with a diameter greater than $2$. If the component
of $\widetilde{T}-v_4$ has a diameter $2$, then the component is $K_{1,r},$ where $r\geq2$. By Lemma \ref{tu3}, we know that $r\leq3$. When $r=2$, $v_4$ is adjacent to the center of $K_{1,2}$, and when $r=3$, $v_4$ is adjacent to a leaf vertex of $K_{1,3}$. By the conditions of Case $2$, we know $r>2$. Therefore, the component with diameter $2$ is isomorphic to $K_{1,3}$. By Lemma \ref{tu3}, there are at most two components isomorphic to $K_1$ and no component isomorphic to $K_2$.

 Hence, we may choose a vertex $v$ such that $\widetilde{T} - v$ contains exactly $s$ components isomorphic to $K_1$, exactly $r+1$ components isomorphic to $K_{1,3}$, and at most one component that is not isomorphic to either $K_1$ or $K_{1,3}$, where $0 \leq s \leq 2, r \geq 0$. Under the above conditions, we choose a vertex $v$ satisfies $r$ so that it is as large as possible.

 By the choice of $\widetilde{T}$, $\widetilde{T}$ is as shown in Fig.~\ref{31} with $N(v)=\{v_{1},\dots,v_{r+1},y_{1},\dots,y_{s},x\}$ ($r\geq 0$, $s\in\{0,1,2\}$) and $X$ the component of $\widetilde{T}-v$ containing $x$.

If $X=\emptyset$, then $r\geq1$. We have
{\small\begin{align*}
\phi(\widetilde{T})&=\phi(\widetilde{T},\bar{v})+\phi(\widetilde{T},v^0)+\phi(\widetilde{T},v^1)\\
&=\phi(\widetilde{T}-v)-\phi(\widetilde{T}-v,\bar{v}\bar{v}_1\cdots\bar{v}_{r+1}\bar{y}_1\cdots\bar{y}_{s})-\sum\limits_{i=1}^{r+1}\phi(\widetilde{T}-v,\bar{v}\bar{v}_1\cdots\bar{v}_{i-1}{v_{i}}^0\bar{v}_{i+1}\cdots\bar{v}_{r+1}\bar{y}_1\cdots\bar{y}_{s})\notag\\
&-\sum\limits_{i=1}^{s}\phi(\widetilde{T}-v,\bar{v}\bar{v}_1\cdots\bar{v}_{r+1}\bar{y}_1\cdots\bar{y}_{i-1}{y_{i}}^0\bar{y}_{i+1}\cdots\bar{y}_{s})+\phi(\widetilde{T},{v}^0\bar{v}_1\cdots\bar{v}_{r+1}\bar{y}_1\cdots\bar{y}_{s})\notag\\
&+\sum\limits_{i=1}^{r+1}\phi(\widetilde{T},{v}{v_{i}})+\sum\limits_{i=1}^{s}\phi(\widetilde{T},{v}y_{i})\notag\\
&=
\begin{cases}
4^{r+1}-2^{r+1}-(r+1)\cdot2^r-0+2^{r+1}+(r+1)\cdot 3^{r}+0, & s=0,\\
4^{r+1}-0-0-2^{r+1}+0+ (r+1)\cdot 3^{r}+3^{r+1}, & s=1,\\
4^{r+1}-0-0-0 +0+ (r+1)\cdot 3^{r}+2\cdot3^{r+1}, & s=2.\\
\end{cases}
\end{align*}}

We shall prove that $\phi(\widetilde{T})\leq g(n)$ for $s=0,1,2$. We prove that $\phi(\widetilde{T})\leq g(n)$ for $s=0$ in detail. Note that $n=4r+5$. For $r=1,2,3$, a direct verification shows that $\phi(\widetilde{T})<g(n)=g(4r+5).$ For $r=4$, we have  $\phi(\widetilde{T})=4^{5}+5\cdot(3^4-2^4)=g(21).$ For $r \geq 5$, 
 we proceed by induction to show that $16\cdot 3^{\frac{n-9}{3}}+3n-25>4^{r+1}+(r+1)\cdot(3^r-2^r)$ for $n=4r+5$. We first verify the conclusion directly for  $r=5$.
We assume that $16\cdot 3^{\frac{4r-4}{3}}+12r-10>4^{r+1}+(r+1)\cdot(3^r-2^r)$ and now prove that $16\cdot 3^{\frac{4r}{3}}+12(r+1)-10>4^{r+2}+(r+2)\cdot(3^{r+1}-2^{r+1})$. Note that
{\small\begin{align*}
&16\cdot 3^{\frac{4r}{3}}+12(r+1)-10-4^{r+2}-(r+2)\cdot(3^{r+1}-2^{r+1})\\
\geq&3^{\frac{4}{3}}\cdot4^{r+1}+3^{\frac{4}{3}}\cdot(r+1)\cdot(3^{r}-2^{r})-3^{\frac{4}{3}}\cdot12r+10\cdot3^{\frac{4}{3}}+12(r+1)-10-4^{r+2}-(r+2)\cdot(3^{r+1}-2^{r+1})\\
\geq&[16\cdot(3^{\frac{4}{3}}-4)\cdot4^{r-1}-(3^{\frac{4}{3}}-1)\cdot12r]+[\frac{7}{2}\cdot(r+2)\cdot(3^{r}-2^{r})-(r+2)\cdot(3^{r+1}-2^{r+1})]+10\cdot3^{\frac{4}{3}}+2\\
>&0.
\end{align*}}
Here, the third inequality holds since $3^{\frac{4}{3}}\cdot(r+1)>\frac{7}{2}\cdot(r+2)$ for $r\geq5$, and the third inequality holds since $16\cdot(3^{\frac{4}{3}}-4)\cdot4^{r-1}-(3^{\frac{4}{3}}-1)\cdot12r>0$ for $r\geq5$ and $\frac{7}{2}\cdot(r+2)\cdot(3^{r}-2^{r})-(r+2)\cdot(3^{r+1}-2^{r+1})>0$.
The inequality $\phi(\widetilde{T})\leq g(n)$ for $s=1,2$ can be proven by a similar discussion as above.

\begin{figure}[htbp] 
    \begin{minipage}[t]{0.145\textwidth}
        \includegraphics[width=\linewidth]{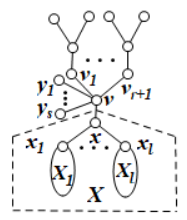}
       \caption{}
        \label{31}
    \end{minipage}
    \begin{minipage}[t]{0.116 \textwidth}
        \includegraphics[width=\linewidth]{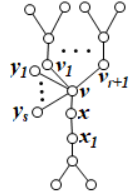}
        \caption{}
        \label{32}
    \end{minipage}
    \begin{minipage}[t]{0.116 \textwidth}
        \includegraphics[width=\linewidth]{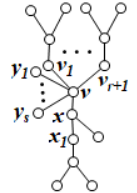}
        \caption{}
        \label{34}
    \end{minipage}
    \begin{minipage}[t]{0.099 \textwidth}
        \includegraphics[width=\linewidth]{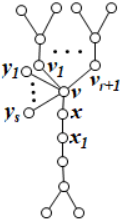}
        \caption{}
        \label{33}
    \end{minipage}
\begin{minipage}[t]{0.11\textwidth}
        \includegraphics[width=\linewidth]{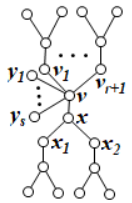}
        \caption{}
        \label{35}
    \end{minipage}
\begin{minipage}[t]{0.125\textwidth}
        \includegraphics[width=\linewidth]{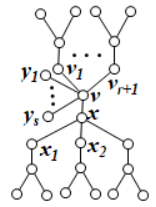}
       \caption{}
        \label{36}
    \end{minipage}
    \begin{minipage}[t]{0.097 \textwidth}
        \includegraphics[width=\linewidth]{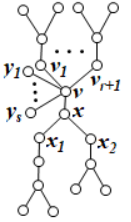}
        \caption{}
        \label{37}
    \end{minipage}
 \begin{minipage}[t]{0.103 \textwidth}
        \includegraphics[width=\linewidth]{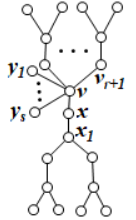}
        \caption{}
        \label{38}
    \end{minipage}
\end{figure}

We now consider $X\neq\emptyset$. Now $n=4(r+1)+|X|+s+1$. Owing to $\phi(\widetilde{T})=\phi(\widetilde{T},\bar{v})+\phi(\widetilde{T},v^0)+\phi(\widetilde{T},v^1)$, we estimate the values of $\phi(\widetilde{T},\bar{v})$, $\phi(\widetilde{T},v^0)$ and $\phi(\widetilde{T},v^1),$ respectively.

Note that
{\begin{align}
\phi(\widetilde{T},\bar{v})
=&\phi(\widetilde{T}-v)-\phi(\widetilde{T}-v,\bar{v}\bar{x}\bar{v}_1\cdots\bar{v}_{r+1}\bar{y}_1\cdots\bar{y}_{s})-\sum\limits_{i=1}^{r+1}\phi(\widetilde{T}-v,\bar{v}\bar{x}\bar{v}_1\cdots\bar{v}_{i-1}{v_{i}}^0\bar{v}_{i+1}\cdots\bar{v}_{r+1})\notag\\
&-\sum\limits_{i=1}^{s}\phi(\widetilde{T}-v,\bar{v}\bar{x}\bar{v}_1\cdots\bar{v}_{r+1}\bar{y}_1\cdots\bar{y}_{i-1}{y_{i}}^0\bar{y}_{i+1}\cdots\bar{y}_{s})-\phi(\widetilde{T}-v,\bar{v}{x}^0\bar{v}_1\cdots\bar{v}_{r+1})\notag\\
=&
\begin{cases}
4^{r+1}\phi(X)-2^{r+1}\phi(X,\bar{x})-(r+1)2^{r}\phi(X,\bar{x})-0-2^{r+1}\phi(X,{x}^0), & s=0,\\
4^{r+1}\phi(X)-0-0-2^{r+1}\phi(X,\bar{x})-0, & s=1,\\
4^{r+1}\phi(X)-0-0-0-0, & s=2, \\
\end{cases}
\end{align}}
and
{\small\begin{align}
\phi(\widetilde{T},{v}^0)+\phi(\widetilde{T},{v}^1)=&\phi(\widetilde{T},{v}^0\bar{x}\bar{v}_1\cdots\bar{v}_{r+1}\bar{y}_1\cdots\bar{y}_{s})+\sum\limits_{i=1}^{r+1}\phi(\widetilde{T},{v}{v_{i}})+\sum\limits_{i=1}^{s}\phi(\widetilde{T},{v}y_{i})+\phi(\widetilde{T},{v}x)\notag\\
\leq&
\begin{cases}
2^{r+1}\phi(X\cup\{v\},v^0\bar{x})+[(r+1)3^r\phi(X-x)+0+3^{r+1}\phi(X-N_X[x])], & s=0,\\
0+[(r+1)3^r\phi(X-x)+3^{r+1}\phi(X-x)+3^{r+1}\phi(X-N_X[x])], & s=1,\\
0+[(r+1)3^r\phi(X-x)+2\cdot3^{r+1}\phi(X-x)+3^{r+1}\phi(X-N_X[x])], & s=2.\\
\end{cases}
\end{align}}

 In fact, $\phi(X\cup\{v\},v^0\bar{x})\leq \phi(X,\bar{x})+\phi(X,x^1).$ Consider any $S\in MD(X\cup\{v\},v^0\bar{x})$. There exists $x_i\in N(x)$ such that $x_i\in S$. If $|S\cap N_X(x)|\geq 2$ or $|S\cap N_X(x)|= 1$ and $d_S(x_i)=1$, then $S\cap X\in MD(X,\bar{x})$.  If $|S\cap N_X(x)|= 1$ and $d_S(x_i)=0$, then $(S\cap X)\cup\{x\}\in MD(X,{x}^1)$. Hence $\phi(X\cup\{v\},v^0\bar{x})\leq \phi(X,\bar{x})+\phi(X,x^1).$

Therefore, from $\phi(X,\bar{x})=\phi(X)-\phi(X,x)$,  $\phi(X,{x}^0)=\phi(X,x)-\phi(X,x^1)$, $(8)$ and $(9)$, we have
{\small\begin{align}
\phi(\widetilde{T})&\leq
\begin{cases}
[4^{r+1}-(r+1)\cdot2^r]\phi(X)+(r-1)\cdot2^r\phi(X,{x})+2^{r+2}\phi(X,x^1)\\
+(r+1)\cdot3^r\phi(X-x)+3^{r+1}\phi(X-N_X[x]), & s=0,\\
(4^{r+1}-2^{r+1})\phi(X)+2^{r+1}\phi(X,{x})+(r+4)3^r\phi(X-x)+3^{r+1}\phi(X-N_X[x]), & s=1,\\
4^{r+1}\phi(X)+(r+7)3^r\phi(X-x)+3^{r+1}\phi(X-N_X[x]), & s=2. \\
\end{cases}
\end{align}}

 If $|V(X)|=5,6$, by the choice of $\widetilde{T}$, there are only three possible structures of $\widetilde{T},$ which are shown in Fig.~\ref{32},~\ref{34} and~\ref{33}. 

If $\widetilde{T}$ is as shown in Fig.~\ref{32}, then $n=4r+s+10$, $\phi(X)=4$, $\phi(X,x)=3$, $\phi(X-x)=4$, $\phi(X-N_X[x])=3$ and $\phi(X,x^1)=1$. Therefore, from $(10)$, we have
\begin{align*}
\phi(\widetilde{T})\leq
\begin{cases}
4^{r+2}+(4r+13)\cdot3^r-(r+3)\cdot2^r, & s=0,\\
4^{r+2} + (4r+25)\cdot 3^{r} - 2^{r+1}, & s=1,\\
4^{r+2} + (4r+37)\cdot 3^{r}, & s=2.\\
\end{cases}
\end{align*}
By direct calculations, similar to before, we obtain $\phi(\widetilde{T})<g(n)$ for $s=0,1,2$.

Similarly, if $\widetilde{T}$ is as shown in Fig.~\ref{34}, then $n=4r+s+11$, $\phi(X)=6$, $\phi(X,x)=4$, $\phi(X-x)=4$, $\phi(X-N_X[x])=3$ and $\phi(X,x^1)=4$. From $(10)$, we have $\phi(\widetilde{T})<g(n).$ 

If $\widetilde{T}$ is as shown in Fig.~\ref{33}, then $n=4r+s+11$, $\phi(X)=6$, $\phi(X,x)=5$, $\phi(X-x)=4$, $\phi(X-N_X[x])=4$ and $\phi(X,x^1)=3$. From $(10)$, we obtain $\phi(\widetilde{T})<g(n).$ 

Next, we consider $|V(X)|\geq 7$. Let $N_X(x) = \{v, x_1, \dots, x_l\}$ with $l \in \mathbb{Z}^+$. For each $i \in [l]$, let $X_i$ be the component of $\widetilde{T} - x$ containing $x_i$. We assume that $|X_{1}|\leq\cdots\leq|X_{l}|$. Note that $X-x$ does not contain $P_3$ as a component by $|X|\geq7$ and the condition of Case $2$.

By Theorem~\ref{S.ChengB.Wu}, we have $\phi(X-x) \leq f(|V(X)|-1).$ If $\phi(X-x)=f(|V(X)|-1)$, then $\widetilde{T}$ is as shown in Fig.~\ref{35} by Theorem~\ref{S.ChengB.Wu}. In this case, $n=4r+s+14$, $\phi(X)=18$, $\phi(X,x)=10$, $\phi(X-x)=16$, $\phi(X-N_X[x])=9$ and $\phi(X,x^1)=6$. From $(10)$, we obtain $\phi(\widetilde{T})<g(n).$ Hence, by Theorems~\ref{Wang,Zhang,Tu,Xiong}, we may assume that $\phi(X-x) \leq f_2(|V(X)|-1)$.

Note that $\phi(X-N_X[x])\leq\phi(X-x-x_1)$. By Theorem~\ref{S.ChengB.Wu}, we have $\phi(X-x-x_1)\leq f(|V(X)|-2)$. If $\phi(X-x-x_1)=f(|V(X)|-2)$, $\widetilde{T}$ is shown in Fig.~\ref{35},~\ref{36},~\ref{37} and~\ref{38} by Theorem~\ref{S.ChengB.Wu}. If $\widetilde{T}$ is as shown in Fig.~\ref{35}, then $\phi(\widetilde{T})<g(n)$ according to the preceding discussion. 

If $\widetilde{T}$ is as shown in Fig.~\ref{36}, then $n=4r+s+18$, $\phi(X)=79$, $\phi(X,x)=35$, $\phi(X-x)=64$, $\phi(X-N_X[x])=27$ and $\phi(X,x^1)=27$. From $(10)$, we have $\phi(\widetilde{T})<g(n).$ 

If $\widetilde{T}$ is as shown in Fig.~\ref{37}, then $n=4r+s+15$, $\phi(X)=26$, $\phi(X,x)=17$, $\phi(X-x)=16$, $\phi(X-N_X[x])=12$ and $\phi(X,x^1)=13$. From $(10)$, we have $\phi(\widetilde{T})<g(n).$ 

If $\widetilde{T}$ is as shown in Fig.~\ref{38}, then $n=4r+s+15$, $\phi(X)=35$, $\phi(X,x)=25$, $\phi(X-x)=18$, $\phi(X-N_X[x])=16$ and $\phi(X,x^1)=9$. From $(10)$, we have $\phi(\widetilde{T})<g(n).$ 

Hence, by Theorems~\ref{Wang,Zhang,Tu,Xiong}, we may assume that $\phi(X-x-x_1)\leq f_2(|V(X)|-2)$.

\textbf{Subcase 2.1.} $r\geq3$ or $rs\neq0$.

 By the induction hypothesis, we have $\phi(X)\le g(n-4r-5-s)$ for $s\in\{0,1,2\},$ and by Lemma~\ref{zw4}, we obtain $\phi(X,x)\leq f_{2}(|X|-1)+1$. Note that $\phi(X,x^1)\leq \phi(X-N_X[x]).$  Above all, from $(10)$, we have
{\small\begin{align}
\phi(\widetilde{T})
&\leq																												
\begin{cases}
[4^{r+1}-(r+1)\cdot2^r]g(n-4r-5)+[(r-1)\cdot2^r+(r+1)\cdot3^r]f_2(n-4r-6)\\
+(2^{r+2}+3^{r+1})f_2(n-4r-7)+(r-1)\cdot2^r, & s=0,\\
(4^{r+1}-2^{r+1})g(n-4r-6)+[2^{r+1}+(r+4)\cdot3^r]f_2(n-4r-7)\\
+3^{r+1}f_2(n-4r-8)+2^{r+1}, & s=1,\\
4^{r+1}g(n-4r-7)+(r+7)\cdot3^rf_2(n-4r-8)+3^{r+1}f_2(n-4r-9), & s=2. \\
\end{cases}
\end{align}}

For $r\geq3$, we prove the result $\phi(\widetilde{T})<g(n)$ according to different values of $n$ and $r$ after modulo $3$. We prove the case $n \equiv 0 \pmod{3}$ and $r  \equiv 0 \pmod{3}$ for $s=0$ in detail. Other cases can be proven by a similar discussion. 

In this case, $n=4(r+1)+|X|+1$ for $r\geq3$. We have $n\geq 24$ for $|X|\geq7$.
If $n\leq 33$, then the pair $(n,r)\in\{(24,3), (27,3), (30,3), (33,3)\}$. Direct computation confirms that $\phi(\widetilde{T}) < g(n)$ for these cases. Next, for $n \geq 34$, we have
\begin{align*}
g(n)-\phi(\widetilde{T})\geq&16\cdot 3^{\frac{n-9}{3}}+3n-25-[4^{r+1}-(r+1)\cdot2^r](3^{\frac{n-4r-6}{3}}+\frac{n-4r-6}{3})\\
&-[(r-1)\cdot2^r+(r+1)\cdot3^r]\cdot64\cdot3^{\frac{n-4r-18}{3}}
-(2^{r+2}+3^{r+1})\cdot15\cdot3^{\frac{n-4r-15}{3}}-(r-1)\cdot2^r\\
=&\{16-[81\cdot4^{r+1}+(64r+199)\cdot3^r-(17r-35)\cdot2^r]\cdot3^{-\frac{4r+9}{3}}\}\cdot3^{\frac{n-9}{3}}\\
&-\frac{1}{3}(n-4r-6)\cdot4^{r+1}+[\frac{1}{3}(n-4r-6)(r+1)-(r-1)]\cdot2^r+3n-25.
\end{align*}
 Let $w_1(r)=[81\cdot4^{r+1}+(64r+199)\cdot3^r-(17r-35)\cdot2^r]\cdot3^{-\frac{4r+9}{3}}$ and $w_2(r)=-\frac{1}{3}(n-4r-6)\cdot4^{r+1}$.
We first prove that $w_1(r+3) < w_1(r)$. This is because
\begin{align*}
w_1(r)-w_1(r+3)=&[81\cdot4^{r+1}+(64r+199)\cdot3^r-(17r-35)\cdot2^r]\cdot3^{-\frac{4r+9}{3}}\\
&-\{81\cdot4^{r+4}+[64(r+3)+199]\cdot3^{r+3}-[17(r+3)-35]\cdot2^{r+3}\}\cdot3^{-\frac{4r+21}{3}}\\
=&[17\cdot4^{r+1}+(\frac{128}{3}r+\frac{206}{3})\cdot3^r-(\frac{1241}{81}r-\frac{2963}{81})\cdot2^r]\cdot3^{-\frac{4r+9}{3}}>0.
\end{align*}

Moreover, note that $r\leq\frac{n-12}{4}$ for $|X|=n-4(r+1)-1\geq7$. Analyzing the derivative of $w_2(r)$ gives $w_2(r) \geq w_2(\frac{n-12}{4})= -2\cdot4^{\frac{n-8}{4}}.$ Observe that $n-4r-6\geq6$ for $n-4(r+1)-1\geq7$. Therefore, we have $\frac{1}{3}(n-4r-6)(r+1)-(r-1)\geq r+3>0$.

Overall, we have
\begin{align*}
g(n)-\phi(\widetilde{T})\geq&[16-w_1(3)]\cdot3^{\frac{n-9}{3}}-2\cdot4^{\frac{n-8}{4}}+[\frac{1}{3}(n-4r-6)(r+1)-(r-1)]\cdot2^r+3n-25\\
\geq&(16-\frac{31165}{2187})\cdot3^{\frac{n-9}{3}}-2\cdot4^{\frac{n-8}{4}}\\
\geq&\frac{3827}{2187}\cdot3^{\frac{n}{3}}-\frac{1}{8}\cdot4^{\frac{n}{4}}>0.
\end{align*}

For $rs\neq0$, from $(11)$, we have 
{\begin{align*}
\phi(\widetilde{T})&\leq
\begin{cases}
64 g(n-15)+81f_2(n-16)+27f_2(n-17), &r=2, s=2. \\
56 g(n-14)+8[f_2(n-15)+1]+54f_2(n-15)+27f_2(n-16), &r=2, s=1,\\
16 g(n-11)+24f_2(n-12)+9f_2(n-13), &r=1, s=2. \\
12 g(n-10)+4[f_2(n-11)+1]+15f_2(n-11)+9f_2(n-12), & r=1, s=1,\\
\end{cases}\\
&<g(n).
\end{align*}}
~~~\textbf{Subcase 2.3.} $1\leq r\leq2, s=0.$

If there exists a component with vertices $x_1,y,y_1,y_2,$ in $X-x$, then $\widetilde{T}$ is as shown in Fig.~\ref{r=1s=0-1}. Recall that $\phi(\widetilde{T})=\phi(\widetilde{T},\bar{y})+\phi(\widetilde{T},y^0)+\phi(\widetilde{T},y^1y_1)+\phi(\widetilde{T},y^1y_2)+\phi(\widetilde{T},y^1x_1).$ By Theorem~\ref{S.ChengB.Wu}, we have $\phi(\widetilde{T},y^1x_1)\leq[4^{r+1}+(r+1)\cdot 3^{r}-(r+1)\cdot2^r]f(n-4r-10)$. If $\phi(\widetilde{T},y^1x_1)=[4^{r+1}+(r+1)\cdot 3^{r}-(r+1)\cdot2^r]f(n-4r-10)$, then $X-x-X_1\cong K_{1,3}$ or $2K_{1,3}$ and $\phi(\widetilde{T})<g(n)$ follows by direct computation. Otherwise,  $\phi(\widetilde{T},y^1x_1)\leq[4^{r+1}+(r+1)\cdot 3^{r}-(r+1)\cdot2^r]f_2(n-4r-10)$ by Theorem~\ref{Wang,Zhang,Tu,Xiong}. Therefore, 
$\phi(\widetilde{T})\leq g(n-3)+0+2g(n-4)+[4^{r+1}+(r+1)\cdot 3^{r}-(r+1)\cdot2^r]f_2(n-4r-10)<g(n)$ for $r=1,2$.

Next, we consider that there exists a component of order $1$ in $X-x$. 

For $r=2$, from $(10)$ and Lemma~\ref{zw4}, we have 
\begin{align*}
\phi(\widetilde{T})&\leq52\phi(X)+4\phi(X,x)+27\phi(X-x)+16\phi(X,{x}^1)+27\phi(X-N_X[x]) \\
&\leq52g(n-13)+4[f_2(n-14)+1]+27f_2(n-15)+16lf(n-l-14)+27f(n-l-14) \\
&\leq52g(n-13)+4[f_2(n-14)+1]+27f_2(n-15)+59f(n-16)\\
&<g(n).
\end{align*}
The third inequality holds for two reasons. First, by Lemma \ref{1}~$(ii)$, $16lf(n-l-14)+27f(n-l-14)$ is decreases in $l$ for $l \geq 3$. Second, we have the inequality $59f(n-16) \geq 75f(n-17)$. Consequently, it follows that $16lf(n-l-14)+27f(n-l-14) \leq 59f(n-16)$.

For $r=1$,
by Theorem~\ref{S.ChengB.Wu}, we have $\phi(X-N_X[x])\leq f(n-l-10).$
If $\phi(X-N_X[x])=f(n-l-10),$ then $X-N_X[x]\cong K_{1,3}$ or $2K_{1,3}$. Since $n \equiv 0 \pmod{3}$ or $n \equiv 2 \pmod{3}$ , the structure of $\widetilde{T}$ is as shown in Fig.~\ref{r=1s=0-2}$~(a),(b),(c),(d)$. By direct calculations, we obtain $\phi(\widetilde{T})<g(n).$ Otherwise, for $r=1$, we have $\phi(X-N_X[x])\leq f_2(n-l-10)$ by Theorem~\ref{Wang,Zhang,Tu,Xiong}.

First, if $X-x$ has two components of order $1$, then $\phi(X-x)< f(n-12)$ since $X-x$ does not contain a component of order $3$ and $4$. From $(10)$, we have 
\begin{align*}
\phi(\widetilde{T})&\leq12\phi(X)+8\phi(X,x^1)+6\phi(X-x)+9\phi(X-N_X[x])\\
&\leq 12g(n-9)+[8(l-2)f(n-l-11)+16f_2(n-l-10)]+6f_2(n-12)+9f_2(n-l-10)\\
&\leq12g(n-9)+8f(n-14)+25f_2(n-13)+6f_2(n-12)<g(n).
\end{align*}
The third inequality holds because $8(l-2)f(n-l-11)+25f_2(n-l-10)$ decreases with increasing $l$ for $l\geq 3$ by Lemma \ref{1}$(ii)$.

Next, if $X-x$ has exactly one component of order $1$, then $X$ does not have the structure shown in Fig.~\ref{fig2}, and it follows that $\phi(X)<g(n-9).$ We have $\phi(X-x)< f(n-11)$ since $X-x$ does not contain a component of order $3$ and $4$. Note that $5\leq|X_2|\leq\cdots\leq|X_l|$. From $(10)$, we have
{\begin{align*}
\phi(\widetilde{T})\leq&12\phi(X)+8\phi(X,x^1)+6\phi(X-x)+9\phi(X-N_X[x])\\
\leq&12[g(n-9)-1]+[8(l-1)f(n-l-11)+8f_2(n-l-10)]\\
&+6\phi(X_2)\phi(X-x-X_1-X_2)+9f_2(n-l-10)\\
\leq&
\begin{cases}
12[g(n-9)-1]+8f(n-13)+17f_2(n-12)+6[g(n-11)-1], & l=2, n=24,\\
12[g(n-9)-1]+8f(n-13)+17f_2(n-12)+6g(n-11), & l=2, n \neq 24,\\
12[g(n-9)-1]+16f(n-14)+17f_2(n-13)+6\cdot5\cdot f_2(n-16), & l\geq3, |X_2|=5,\\
12[g(n-9)-1]+16f(n-14)+17f_2(n-13)+6\cdot\displaystyle\frac{15}{16}\cdot f_2(n-11), & l\geq3, |X_2|\geq6.\\
\end{cases}\\
<&g(n).
\end{align*}}
The third inequality holds because $X_2$ does not attain the upper bound $g(n-11)$ by the structure of $\widetilde{T}$ for $l=2, n=24$, and by Lemma~\ref{1}~$(ii)$, $8(l-1)f(n-l-11)+17f_2(n-l-10)$ decreases with increasing $l$ for $l\geq 3$.

\begin{figure}[htbp]
\centering
\begin{minipage}[t]{0.115 \textwidth}
        \includegraphics[width=\linewidth]{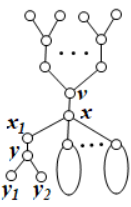}
        \caption{}
        \label{r=1s=0-1}
    \end{minipage}
\hspace{0.4cm}
\begin{minipage}[t]{0.47\textwidth}
        \includegraphics[width=\linewidth]{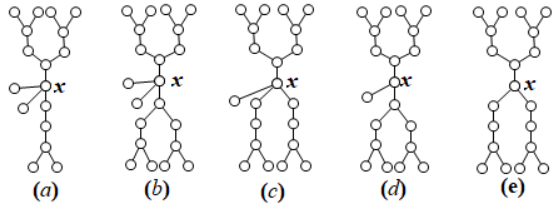}
        \caption{}
        \label{r=1s=0-2}
    \end{minipage}
\hspace{0.4cm}
\begin{minipage}[t]{0.12\textwidth}
        \includegraphics[width=\linewidth]{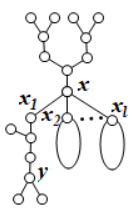}
        \caption{}
        \label{r=1s=0-3}
    \end{minipage}
\hspace{0.4cm}
\begin{minipage}[t]{0.065\textwidth}
        \includegraphics[width=\linewidth]{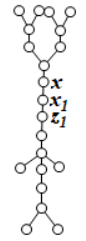}
        \caption{}
        \label{r=1s=0-4}
    \end{minipage}
\end{figure}

Next, we consider that $X - x$ does not contain any components of order  $1$ or  $4$. Note that $5\leq|X_1|\leq\cdots\leq|X_l|$.

For $r=2$, we have $\phi(\widetilde{T})\leq52\phi(X)+4\phi(X,x)+27\phi(X_1)\phi(X-X_1-x)+16\phi(X,{x}^1)+27\phi(X-N_X[x])$. If either $l \geq 2$, or $l = 1$ and $|X| \neq 7$, then the tree $X$ does not have the structure shown in Fig.~\ref{fig2}, consequently, $\phi(X) < g(n-13)$. By Theorem~\ref{S.ChengB.Wu}, we have $\phi(X-N_X[x])\leq f(n-15)$. If $\phi(X-N_X[x])= f(n-15)$, then $\widetilde{T}$ is shown in Fig.~\ref{38}. On the basis of the preceding discussion, we have  $\phi(\widetilde{T})<g(n).$ Thus, $\phi(X-N_X[x])< f(n-15)$. From $(10)$, by the induction hypothesis and Lemma~\ref{1}~$(ii)$,~\ref{zw4}, we have 
{\small\begin{align*}
&\phi(\widetilde{T})\\
\leq&
\begin{cases}
52\phi(X)+4\phi(X,x)+27g(n-14)+16f(n-16)+27f_2(n-15), &l=1,\\
52\phi(X)+4\phi(X,x)+27g(|X_1|)f_2(n-|X_1|-14)+16lf(n-l-15)+27f(n-l-14), &l\geq2. \\
\end{cases}\\
\leq&
\begin{cases}
52g(n-13)+4[f_2(n-14)+1]+27g(n-14)+16f(n-16)+27f_2(n-15), &l=1,~ |X|=7,\\
52[g(n-13)-1]+4[f_2(n-14)+1]+27g(n-14)+16f(n-16)+27f_2(n-15), &l=1,~ |X|\neq7,\\
52[g(n-13)-1]+4[f_2(n-14)+1]+27\cdot5f_2(n-19)+32f(n-17)+27f(n-16), &l\geq2,~ |X_1|=5,\\
52[g(n-13)-1]+4[f_2(n-14)+1]+27\cdot\displaystyle\frac{15}{16}f_2(n-14)+32f(n-17)+27f(n-16), &l\geq2,~|X_1|\geq6. \\
\end{cases}\\
<&g(n).
\end{align*}}
The second inequality holds because $5\leq|X_{2}|\leq\cdots\leq|X_{l}|$~(hence, $\phi(X-X_1-x)\leq f_2(n-|X_1|-14)$ by Theorem~\ref{S.ChengB.Wu} and Theorem~\ref{Wang,Zhang,Tu,Xiong}), and by Lemma \ref{1}$(ii)$, $16lf(n-l-15)+27f(n-l-14)$ decreases with increasing $l$ for $l\geq 2$.

For $r=1$ and $l\geq2$, we have $\phi(\widetilde{T})\leq12\phi(X)+8\phi(X,x^1)+6\phi(X_1)\phi(X-X_1-x)+9\phi(X-N_X[x]).$ In this case, $X$ does not have the structure shown in Fig.~\ref{fig2}, which implies that $\phi(X)<g(n-9)$. By Theorem~\ref{S.ChengB.Wu} and the condition $5 \leq |X_2| \leq \cdots \leq |X_l|$, we have $\phi(X - X_1 - x) < f(n - |X_1| - 10).$ By Theorem~\ref{S.ChengB.Wu}, we have $\phi(X-N_X[x])\leq f(n-l-10).$
If $\phi(X-N_X[x])=f(n-l-10),$ then $X-N_X[x]\cong 2K_{1,3}$, meaning that $\widetilde{T}$ is as shown in Fig.~\ref{r=1s=0-2}~$(e)$. By direct calculations, we obtain $\phi(\widetilde{T})<g(n).$ We may therefore assume that $\phi(X - N_X[x]) \leq f_2(n - l - 10)$. Note that $n\geq 2|X_1|+10$. 

If $|X_1|\geq10$, then $n\geq 30$. 
  From $(10)$, by Lemma~\ref{1}~$(ii),$ we have
{\small\begin{align*}
\phi(\widetilde{T})&\leq12[g(n-9)-1]+8lf(n-l-11)+6\phi(X_1)f_2(n-|X_1|-10)+9f_2(n-l-10)\\
&\leq12[g(n-9)-1]+16f(n-13)+6\cdot\displaystyle\frac{15}{16}f_2(n-10)+9f_2(n-12)\\
&<g(n).
\end{align*}}
The second inequality holds because $8l f(n - l - 11) + 9f_2(n - l - 10)$ is decreasing in $l$ for $l \geq 2$.

Next, we consider that $|X_1|=5,6,7,8,9$. If $|X_1| = 7$ and $\phi(X_1)= g(7)$, then the structure of $\widetilde{T}$ is as shown in Fig.~\ref{r=1s=0-3}. Thus, $\phi(\widetilde{T})=\phi(\widetilde{T},\bar{y})+\phi(\widetilde{T},{y}^0)+\phi(\widetilde{T},{y}^1)\leq g(n-3)+0+[2g(n-4)+g(n-6)]<g(n).$ Otherwise, from $(10)$, by Lemma~\ref{1}~$(ii),$ we have
{\small\begin{align*}
\phi(\widetilde{T})&\leq
\begin{cases}
12[g(15)-1]+8\cdot2\cdot f(n-l-11)+6[g(7)-1]^2+9f_2(12), & |X_1|=7, n=24,\\
12[g(n-9)-1]+8lf(n-l-11)+6\phi(X_1)f_2(n-|X_1|-10)+9f_2(n-l-10), & \text{Otherwise}.\\
\end{cases}\\
&\leq
\begin{cases}
12[g(15)-1]+8\cdot2\cdot f(11)+6[g(7)-1]^2+9f_2(12), & |X_1|=7, n=24,\\
12[g(n-9)-1]+16f(n-13)+6[g(|X_1|)-1] f_2(n-|X_1|-10)+9f_2(n-12), & |X_1|=5,7(n\neq24),\\
12[g(n-9)-1]+16f(n-13)+6g(|X_1|)f_2(n-|X_1|-10)+9f_2(n-12), & |X_1|=6,8,9,\\
\end{cases}\\
&<g(n).
\end{align*}}
The third inequality holds for four reasons. First, owing to the structure of $\widetilde{T}$, $X_1$ does not have the structure shown in Fig.~\ref{fig2} for $|X_1|=5$. Second, for $|X_1| = 7$ and $n = 24$, it follows that $l = 2$ and $|X_1| = |X_2| = 7$. 

We now consider $l=1$. If $d(x_1)\geq3$, let $Y$ be a component of $X_1-x_1$ with the minimum order. 
From $(10)$, by Lemma \ref{1}~$(ii)$, we have
{\small\begin{align*}
\phi(\widetilde{T})&\leq12\phi(X)+8\phi(X,x^1)+6\phi(X-x)+9\phi(Y)\phi(X-N[x]-Y)\\
&\leq
\begin{cases}
12g(n-9)+8f(n-13)+6g(n-10)+9f(n-12), & |Y|=1,\\
12g(n-9)+8f(|Y|-1)f(n-|Y|-12)+6g(n-10)+9g(|Y|)f(n-|Y|-11), & |Y|=4,5,\\
12g(n-9)+8f(n-13)+6g(n-10)+9\cdot\displaystyle\frac{15}{16}f_2(n-11), & |Y|\geq6.\\
\end{cases}\\
&<g(n).
\end{align*}}
~~~~If $d(x_1)=2$, let $N(x_1)=\{x,z_{1}\}$. If $d(z_{1})\geq3$, let $Z$ be a component of $X_1-\{x_1,z_{1}\}$ with the minimum order. 
From $(10)$, by Lemma \ref{1}~$(ii)$, we have
{\small\begin{align*}
\phi(\widetilde{T})&\leq12\phi(X)+8\phi(Z)\phi(X-\{x,x_1,z_1\}-Z)+6\phi(X-x)+9\phi(X-N_X[x])\\
&\leq
\begin{cases}
12[g(n-9)-1]+8f(n-13)+6g(n-10)+9g(n-11), & |Z|=1,\\
12[g(n-9)-1]+32f(n-16)+6[g(n-10)-1]+9g(n-11), & |Z|=4,~n\neq21,\\
12[g(n-9)-1]+32g(n-16)+6[g(n-10)-1]+9g(n-11), & |Z|=4,~n=21,\\
12[g(n-9)-1]+32f(n-17)+6g(n-10)+9g(n-11), & |Z|=5,\\
12[g(n-9)-1]+8\cdot\displaystyle\frac{15}{16}f_2(n-12)+6g(n-10)+9g(n-11), & |Z|\geq6.\\
\end{cases}\\
&<g(n).
\end{align*}}
The second inequality holds for two reasons: First, owing to the structure of $\widetilde{T}$, $X$ does not attain the upper bound $g(n-9)$; second, for $|Z|=4$ and $n=21$, it follows that $d(z_{1})=3$. 

Next, we consider $d(z_{1})=2$. If $n=21$ and $\phi(X,x^1)=g(n-12)$, then $\widetilde{T}$ is as shown in  Fig. \ref{r=1s=0-4}. By direct calculations, $\phi(\widetilde{T})<g(21).$ Hence, we have 
{\small\begin{align*}
\phi(\widetilde{T})&\leq12\phi(X)+8\phi(X,x^1)+6\phi(X-x)+9\phi(X-N_X[x])\\
&\leq
\begin{cases}
12[g(n-9)-1]+8[g(n-12)-1]+6[g(n-10)-1]+9[g(n-11)-1], & n=21,\\
12[g(n-9)-1]+8g(n-12)+6g(n-10)+9g(n-11), & n\neq21,\\
\end{cases}\\
&<g(n).
\end{align*}}
The second inequality holds for two reasons. First, owing to the structure of $\widetilde{T}$, the tree $X$ does not attain the upper bound $g(n-9)$. Second, for $n=21$, it follows that neither $X-x$ nor $X-N_X[x]$ attains the upper bounds $g(n-10)$ and $g(n-11)$, respectively.

\begin{figure}[htbp]
\centering
 \begin{minipage}[t]{0.066 \textwidth}
        \includegraphics[width=\linewidth]{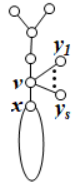}
        \caption{}
        \label{81}
    \end{minipage}
\hspace{0.8cm}
\begin{minipage}[t]{0.103 \textwidth}
        \includegraphics[width=\linewidth]{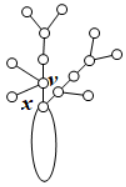}
        \caption{}
        \label{83}
    \end{minipage}
\hspace{0.3cm}
    \begin{minipage}[t]{0.103 \textwidth}
        \includegraphics[width=\linewidth]{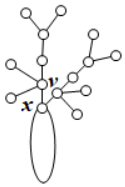}
        \caption{}
        \label{84}
    \end{minipage}
\hspace{0.3cm}
    \begin{minipage}[t]{0.105 \textwidth}
        \includegraphics[width=\linewidth]{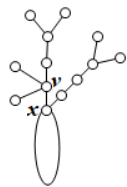}
        \caption{}
        \label{82}
    \end{minipage}
\hspace{0.3cm}
\begin{minipage}[t]{0.095 \textwidth}
        \includegraphics[width=\linewidth]{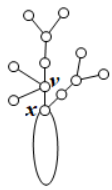}
        \caption{}
        \label{r=0s=2-1}
    \end{minipage}
\hspace{0.3cm}
    \begin{minipage}[t]{0.105 \textwidth}
        \includegraphics[width=\linewidth]{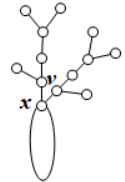}
        \caption{}
        \label{86}
    \end{minipage}
\hspace{0.3cm}
    \begin{minipage}[t]{0.102 \textwidth}
        \includegraphics[width=\linewidth]{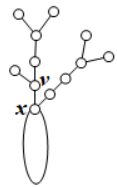}
        \caption{}
        \label{85}
    \end{minipage}
\end{figure}

\textbf{Subcase 2.3.} $r=0.$

In this case, we assume that $v$ is the vertex $v_4$ on the longest path that was discussed at the beginning of Case $2.$ See  Fig.~\ref{81}.

\textbf{Subcase 2.3.1.} $s\neq 0.$

If $d(x)\geq 3$, then each component $S$ of $\widetilde{T}-x$ disjoint from $P$ satisfies $\max\{d(u,x):u\in V(S)\}\in\{1,3,4\}.$ If there exists a component $S$ of $\widetilde{T}-x$ disjoint from $P$ satisfying $\max\{d(u,x):u\in V(S)\}\in\{3,4\},$ then only seven graphs remain to be verified; see Fig.~\ref{83},~\ref{84},~\ref{82},~\ref{r=0s=2-1},~\ref{86},~\ref{85} and~\ref{r=0s=1-3}. Note that $\phi(\widetilde{T})=\phi(\widetilde{T},\bar{v}\bar{v}_1)+\phi(\widetilde{T},\bar{v}{v_1})+\phi(\widetilde{T},v^0)+\phi(\widetilde{T},v^1\bar{x})+\phi(\widetilde{T},v^1{x}).$ When $\widetilde{T}$ is shown in Fig.~\ref{r=0s=2-1}, by Theorem~\ref{S.ChengB.Wu}, we have $\phi(\widetilde{T},v^1\bar{x})\leq28f(n-12)$. On the basis of the preceding discussion, we have that $X - x$ does not contain component isomorphic to $P_3$, and $X - x\not\cong rK_{1,3}$ where $r\geq2$. Thus $\phi(\widetilde{T},v^1\bar{x})<28f(n-12)$. Therefore,
 by Theorems~\ref{S.ChengB.Wu} and \ref{Wang,Zhang,Tu,Xiong}, we have
{\small\begin{align*}
\phi(\widetilde{T})&\leq
\begin{cases}
2g(n-7)+2g(n-7)+0+42f(n-14)+12f(n-15),&\text{if}~\widetilde{T} ~\text{is as shown in Fig.}~\ref{83},\\
2g(n-7)+2g(n-7)+0+77f(n-15)+12f(n-16), &\text{if}~\widetilde{T} ~\text{is as shown in Fig.}~\ref{84},\\
2g(n-7)+2g(n-7)+0+28f(n-13)+12f(n-12), &\text{if}~\widetilde{T} ~\text{is as shown in Fig.}~\ref{82},\\
2g(n-7)+2g(n-7)+0+28f_2(n-12)+9f(n-13), &\text{if}~\widetilde{T} ~\text{is as shown in Fig.}~\ref{r=0s=2-1},\\
2[f_2(n-7)+1]+2g(n-6)+0+24f(n-13)+12f(n-14),&\text{if}~\widetilde{T} ~\text{is as shown in Fig.}~\ref{86},\\
2[f_2(n-7)+1]+2g(n-6)+0+16f(n-12)+12f(n-13), &\text{if}~\widetilde{T} ~\text{is as shown in Fig.}~\ref{85}.\\
\end{cases}\\
&<g(n).
\end{align*}}
~~~~If $\widetilde{T}$ is as shown in Fig.~\ref{r=0s=1-3} and $n=18$, then $\widetilde{T}$ is as shown in Fig.~\ref{r=0s=1-2}~$(a)$,~$(b)$,~$(c)$,~$(d)$. By direct calculations, we obtain $\phi(\widetilde{T})<g(n).$ 
If $n\neq18$, similar to the discussion for Fig.~\ref{r=0s=2-1}, we have $\phi(X-N[y]-x)< f(n-11)$. Then, $\phi(\widetilde{T})=\phi(\widetilde{T},\bar{y})+\phi(\widetilde{T},{y}^0)+\phi(\widetilde{T},{y}^1)\leq g(n-3)+0+[2g(n-4)+6f_2(n-11)]<g(n)$.
\begin{figure}[htbp]
\centering
 \begin{minipage}[t]{0.098 \textwidth}
        \includegraphics[width=\linewidth]{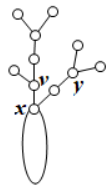}
        \caption{}
        \label{r=0s=1-3}
    \end{minipage}
\hspace{0.6cm}
 \begin{minipage}[t]{0.4 \textwidth}
        \includegraphics[width=\linewidth]{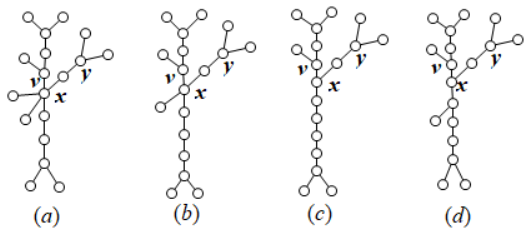}
        \caption{}
        \label{r=0s=1-2}
    \end{minipage}
\end{figure}

Otherwise, the components of $\widetilde{T}-x$ disjoint from $P$ are isomorphic to $K_1$. By Theorems~\ref{S.ChengB.Wu} and~\ref{Wang,Zhang,Tu,Xiong}, we have
{\small\begin{align*}
\phi(\widetilde{T})=&\phi(\widetilde{T},\bar{v}\bar{v}_1)+\phi(\widetilde{T},\bar{v}{v_1})+\phi(\widetilde{T},v^0)+\phi(\widetilde{T},v^1\bar{x})+\phi(\widetilde{T},v^1{x})\\
\leq&
\begin{cases}
2g(n-7)+2g(n-7)+0+7g(n-10)+3f(n-11),  &s=2,~x ~\text{has two leaf neighbors},\\
2g(n-7)+2g(n-7)+0+7g(n-9)+3f(n-10), &s=2,~ x ~\text{has one leaf neighbor},\\
2[f_2(n-7)+1]+2g(n-6)+0+4g(n-9)+3f(n-10),  &s=1,~x ~\text{has two leaf neighbors},\\
2[f_2(n-7)+1]+2g(n-6)+0+4g(n-8)+3f(n-9), &s=1,~ x ~\text{has one leaf neighbor}.\\
\end{cases}\\
<&g(n).
\end{align*}}
~~~If $d(x)=2$, let $N(x)=\{v,x_1\}.$  Let $Y$ be a component of $X-\{x,x_1\}$ with the minimum order. Thus,
{\small
\begin{align*}
&\phi(\widetilde{T})\\
=&\phi(\widetilde{T},\bar{v}\bar{v}_1)+\phi(\widetilde{T},\bar{v}{v_1})+\phi(\widetilde{T},v^0)+\phi(\widetilde{T},v^1\bar{x})+\phi(\widetilde{T},v^1{x})\\
\leq&
\begin{cases}
2g(n-7)+2g(n-7)+0+7g(n-8)+3f(n-9)-4, &s=2,~ n=15,\\
2g(n-7)+2g(n-7)+0+7g(n-8)+3f(n-9), &s=2,~  n\neq15,\\
2[f_2(n-7)+1]+2[g(n-6)-1]+0+4[g(n-7)-1]+3[g(n-8)-1], &s=1,~d(x_1)=2,~n=18,\\
2[f_2(n-7)+1]+2[g(n-6)-1]+0+4g(n-7)+3g(n-8), &s=1,~d(x_1)=2,~n\neq18,\\
2[f_2(n-7)+1]+2g(n-6)+0+4g(n-7)+3f(n-9), &s=1,~d(x_1)\geq3,~|Y|=1,\\
2[f_2(n-7)+1]+2g(n-6)+0+4g(n-7)+3g(|Y|)f_2(n-|Y|-8), &s=1,~d(x_1)\geq3,~|Y|=4,5,\\
2[f_2(n-7)+1]+2g(n-6)+0+4g(n-7)+3\cdot\displaystyle\frac{15}{16}f_2(n-8), &s=1,~d(x_1)\geq3,~|Y|\geq6.\\
\end{cases}\\
<&g(n).
\end{align*}
}
The second inequality holds for the following reasons. For $s=2,~n = 15$, the second inequality holds because  $X$  and  $X - x$  cannot both have the structure shown in Fig.~\ref{fig2}. For $s=1,~d(x_1)=2$ and $n=18,$ neither $X$, $X-x$ nor $X-\{x,x_1\}$ attains the corresponding structure shown in Fig.~\ref{fig2}. Thus $\phi(X)<g(n-6)$, $\phi(X-x)<g(n-7)$ and $\phi(X-\{x,x_1\})<g(n-8)$. For $s=1,~d(x_1)=2$ and $n\neq18,$ the tree $X$ cannot attain the corresponding structure shown in Fig.~\ref{fig2}. For $s=1,~d(x_1)\geq3$ and $|Y|\geq4$, since there exists a component of order at least $5$ in $X - \{x, x_1\} \cup Y$, $\phi(X - \{x, x_1\} \cup Y)\leq f_2(n - |Y| - 8)$.

\textbf{Subcase 2.3.3.} $s=0.$

\begin{figure}[htbp]
\centering
\begin{minipage}[t]{0.105 \textwidth}
        \includegraphics[width=\linewidth]{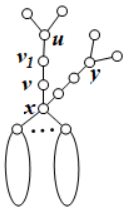}
        \caption{}
        \label{89}
    \end{minipage}
\hspace{0.2cm}
    \begin{minipage}[t]{0.118 \textwidth}
        \centering
       \includegraphics[width=\linewidth]{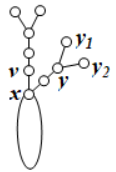}
        \caption{}
        \label{r=0s=0-1}
    \end{minipage}
\hspace{0.2cm}
     \begin{minipage}[t]{0.058 \textwidth}
        \includegraphics[width=\linewidth]{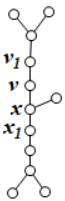}
        \caption{}
        \label{92}
    \end{minipage}
    \hspace{0.8cm}
    \begin{minipage}[t]{0.052 \textwidth}
        \includegraphics[width=\linewidth]{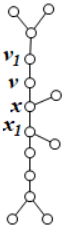}
        \caption{}
        \label{91}
    \end{minipage}
    \hspace{0.8cm}
    \begin{minipage}[t]{0.1 \textwidth}
        \includegraphics[width=\linewidth]{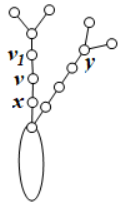}
        \caption{}
        \label{93}
    \end{minipage}
 \hspace{0.15cm}
     \begin{minipage}[t]{0.09 \textwidth}
        \includegraphics[width=\linewidth]{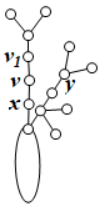}
        \caption{}
        \label{96}
    \end{minipage}
 \hspace{0.15cm}
\begin{minipage}[t]{0.094 \textwidth}
        \includegraphics[width=\linewidth]{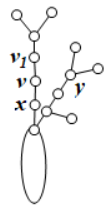}
        \caption{}
        \label{95}
    \end{minipage}
    \hspace{0.15cm}
\begin{minipage}[t]{0.093 \textwidth}
        \includegraphics[width=\linewidth]{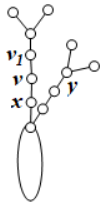}
        \caption{}
        \label{94}
    \end{minipage}
\end{figure}
If $d(x)\geq 3$, then each component $S$ of $\widetilde{T}-x$ disjoint from $P$ satisfies $\max\{d(u,x):u\in V(S)\}\in\{1,3,4\}.$ If there exists a component $S$ of $\widetilde{T}-x$ disjoint from $P$ satisfying $\max\{d(u,x):u\in V(S)\}=4,$ then only one graph remains to be verified; see Fig.~\ref{89}. By Theorems~\ref{S.ChengB.Wu} and Lemma \ref{zw4}, we have
{\begin{align*}
\phi(\widetilde{T})=&\phi(\widetilde{T},\bar{v}_1)+\phi(\widetilde{T},{v_1}^0)+\phi(\widetilde{T},{v_1}^1)\\
=&[\phi(\widetilde{T},\bar{v}_1\bar{u})+\phi(\widetilde{T},\bar{v}_1u\bar{y})+\phi(\widetilde{T},\bar{v}_1u{y}^0)+\phi(\widetilde{T},\bar{v}_1u{y}^1)]+\phi(\widetilde{T},{v_1}^0)\\
&+[\phi(\widetilde{T},{v_1}^1u\bar{y})+\phi(\widetilde{T},{v_1}^1u{y}^0)+\phi(\widetilde{T},{v_1}^1u{y}^1)+\phi(\widetilde{T},{v_1}^1v)]\\
\leq&
\begin{cases}
\{4f(n-12)+2g(n-7)+0+2[2g(n-8)+g(n-9)]\}+[f_2(n-6)+1]\\
+\{g(n-8)+0+[2g(n-9)+g(n-10)]+4g(n-11)\}, &d(x)=3,\\
\{4f(n-13)+2g(n-7)+0+2[2g(n-8)+g(n-9)]\}+[f_2(n-6)+1]\\
+\{g(n-8)+0+[2g(n-9)+g(n-10)]+4f(n-11)\}, &d(x)\geq4.\\
\end{cases}\\
<&g(n).
\end{align*}}
~~~If there exists a component $S$ of $\widetilde{T}-x$ disjoint from $P$ with $\max\{d(u,x):u\in V(S)\}=3$ (see Fig.~\ref{r=0s=0-1}), then neither $\widetilde{T}-\{y,y_1,y_2\}$ nor $\widetilde{T}-N[y]$ attains the corresponding structure shown in Fig.~\ref{fig2}. Therefore, we have $\phi(\widetilde{T})=\phi(\widetilde{T},\bar{y})+\phi(\widetilde{T},{y}^0)+\phi(\widetilde{T},{y}^1)\leq [g(n-3)-1]+0+\{2[g(n-4)-1]+4f(n-10)\}<g(n)$.

 Otherwise, the components of $\widetilde{T}-x$ disjoint from $P$ are isomorphic to $K_1$.

If the components of $\widetilde{T}-x$ that are disjoint from $P$ consist of two isolated vertices, then $\phi(\widetilde{T})=\phi(\widetilde{T},\bar{x})+\phi(\widetilde{T},{x}^0)+\phi(\widetilde{T},{x}^1\bar{x}_3)+\phi(\widetilde{T},{x}^1{x_3}).$ By Theorem~\ref{S.ChengB.Wu}, $\phi(\widetilde{T},{x}^1\bar{x}_3)\leq11f(n-9)$. On the basis of the preceding discussion, we have that $X - N[x]$ does not contain component isomorphic to $P_3$ and $X - N[x]\not\cong rK_{1,3}$ where $r\geq2$. If $X - N[x]\cong K_{1,3},$ then $n=13$. This contradicts $n \equiv 0,2 \pmod{3}$. Thus, $\phi(\widetilde{T},{x}^1\bar{x}_3)<11f(n-9)$. Therefore, $\phi(\widetilde{T})\leq 4g(n-8)+0+11f_2(n-9)+4f(n-10)<g(n)$.

If the components of $\widetilde{T}-x$ that are disjoint from $P$ consist of one isolated vertex, then $\phi(\widetilde{T})=\phi(\widetilde{T},\bar{x}\bar{v})+\phi(\widetilde{T},\bar{x}v)+\phi(\widetilde{T},{x}^0)+\phi(\widetilde{T},{x}^1\bar{x}_2)+\phi(\widetilde{T},{x}^1x_2)$. By Theorem~\ref{S.ChengB.Wu}, we have $\phi(\widetilde{T},{x}^1\bar{x}_2)\leq 7f(n-8)$ for $d(x_2)\geq3$ and $\phi(\widetilde{T},{x}^1x_2)\leq f(n-7-d(x_2))$. If $\phi(\widetilde{T},{x}^1\bar{x}_2)= 7f(n-8)$ for $d(x_2)\geq3$, then it follows from the condition of Case $2$ and the preceding discussion that $X -N[x]$ does not contain a component isomorphic to $P_3$ and that $X -N[x]\not\cong rK_{1,3}$ for any $r\geq2$. Thus, $\phi(\widetilde{T},{x}^1\bar{x}_2)< 7f(n-8)$ for $d(x_2)\geq3$. 
If $\phi(\widetilde{T},{x}^1x_2)= f(n-7-d(x_2)),$ then every component of $X-\{x,x_1\}-N[x_2]$ is isomorphic to either $P_3$ or $K_{1,3}$. If there exists a component of $X-\{x,x_1\}-N[x_2]$ isomorphic to $K_{1,3}$, then by the preceding discussion and $n \equiv 0,2 \pmod{3}$, we only need to consider the case $d(x_2)=3$ in which $x_2$ has exactly one leaf neighbor. See ~\ref{91}. Otherwise, it follows from the earlier discussion that $\widetilde{T}$ is as shown in Fig.~\ref{92}. By direct calculations, we obtain $\phi(\widetilde{T})<g(n).$  Thus, $\phi(\widetilde{T},{x}^1x_2)< f(n-7-d(x_2)).$ Therefore, by Theorems~\ref{S.ChengB.Wu} and~\ref{Wang,Zhang,Tu,Xiong} and Lemma \ref{zw4}, we have
{\small\begin{align*}
\phi(\widetilde{T})&=\phi(\widetilde{T},\bar{x}\bar{v})+\phi(\widetilde{T},\bar{x}v)+\phi(\widetilde{T},{x}^0)+\phi(\widetilde{T},{x}^1\bar{x}_2)+\phi(\widetilde{T},{x}^1x_2)\\
&\leq
\begin{cases}
[f_2(n-8)+1]+3g(n-7)+0+7g(n-8)+4f_2(n-9), &d(x_2)=2,\\
[f_2(n-8)+1]+3g(n-7)+0+7f_2(n-8)+4f_2(n-10), &d(x_2)\geq3.\\
\end{cases}\\
&<g(n).
\end{align*}}
~~~If $d(x)=2$, we first consider that $d(x_1)\geq3$. Then, each component $S$ of $\widetilde{T}-x_1$ disjoint from $P$ satisfies $\max\{d(u,x_1):u\in V(S)\}\in\{1,3,4,5\}.$ If there exists a component $S$ of $\widetilde{T}-x_1$ disjoint from $P$ satisfying $\max\{d(u,x_1):u\in V(S)\}=5,$ define the path $P_1=u_1u_2u_3u_4u_5x_1$, where $u_i\in V(S)$ for each $i\in [5]$. If either $d(u_4)\neq2$ or $d(u_5)\neq2$, then by analogy with the preceding proof, we have $\phi(\widetilde{T})<g(n)$. Then, only one graph remains to be verified; see Fig.~\ref{93}. If there exists a component $S$ of $\widetilde{T}-x_1$ disjoint from $P$ with $\max\{d(u,x_1):u\in V(S)\}=4$, then only three graphs remain to be verified; see Fig.~\ref{96},~\ref{95} and~\ref{94}. Therefore, by Theorems~\ref{S.ChengB.Wu} and Lemma \ref{zw4}, we have
{\small\begin{align*}
\phi(\widetilde{T})=&\phi(\widetilde{T},\bar{v}_1)+\phi(\widetilde{T},{v_1}^0)+\phi(\widetilde{T},{v_1}^1)\\
=&[\phi(\widetilde{T},\bar{v}\bar{v_1})+\phi(\widetilde{T},\bar{v}{v_1}^0)+\phi(\widetilde{T},\bar{v}{v_1}^1\bar{y})+\phi(\widetilde{T},\bar{v}{v_1}^1{y}^0)+\phi(\widetilde{T},\bar{v}{v_1}^1{y}^1)]+\phi(\widetilde{T},{v}^0)\\
&+[\phi(\widetilde{T},{v}^1\bar{x}\bar{y})+\phi(\widetilde{T},{v}^1\bar{x}{y}^0)+\phi(\widetilde{T},{v}^1\bar{x}{y}^1)+\phi(\widetilde{T},{v}^1x)]\\
\leq&
\begin{cases}
\{8f(n-14)+[f_2(n-6)+1]+g(n-8)+0+[2g(n-9)+g(n-10)]\}+2[f_2(n-7)+1]\\
+\{g(n-9)+0+[2g(n-10)+g(n-11)]+18f(n-13)\},\quad\quad \quad\text{if}~\widetilde{T} ~\text{is as shown in Fig. }~\ref{93},\\
\{8f(n-15)+[f_2(n-6)+1]+g(n-8)+0+[2g(n-9)+g(n-12)]\}+2[f_2(n-7)+1]\\
+\{g(n-9)+0+[2g(n-10)+g(n-13)]+33f(n-14)\}, \quad\quad \quad\text{if}~\widetilde{T} ~\text{is as shown in Fig. }~\ref{96},\\
\{8f(n-14)+[f_2(n-6)+1]+g(n-8)+0+[2g(n-9)+g(n-11)]\}+2[f_2(n-7)+1]\\
+\{g(n-9)+0+[2g(n-10)+g(n-12)]+18f(n-13)\}, \quad\quad \quad\text{if}~\widetilde{T} ~\text{is as shown in Fig. }~\ref{95},\\
\{8f(n-14)+[f_2(n-6)+1]+g(n-8)+0+[2g(n-9)+g(n-10)]\}+2[f_2(n-7)+1]\\
+\{g(n-9)+0+[2g(n-10)+g(n-11)]+18f(n-12)\}, \quad\quad \quad\text{if}~\widetilde{T} ~\text{is as shown in Fig. }~\ref{94},\\
\end{cases}\\
<&g(n).
\end{align*}}
\quad  If there exists a component $S$ of $\widetilde{T}-x_1$ disjoint from $P$ with $\max\{d(u,x_1):u\in V(S)\}=3$ (see Fig.~\ref{r=0s=0-3}), then $\phi(\widetilde{T})=\phi(\widetilde{T},\bar{y})+\phi(\widetilde{T},{y}^0)+\phi(\widetilde{T},{y}^1y_1)+\phi(\widetilde{T},{y}^1y_2)+\phi(\widetilde{T},{y}^1y_3).$  Based on the structure of $\widetilde{T}$ restricted by the preceding discussion and the choice of $P$, it follows that $X-\{x,x_1\}\cup N[y]$ must contain a component of order at least $5$. Thus $\phi(\widetilde{T},{y}^1y_1)<f(n-11)$ by Theorems~\ref{S.ChengB.Wu}. Therefore, $\phi(\widetilde{T})\leq g(n-3)+0+6f_2(n-11)+g(n-4)+g(n-4)<g(n)$ for $n\neq 18$. For $n = 18$, only one structure remains to be verified; see Fig.~\ref{97}, and all the other structures have already been shown to satisfy $\phi(\widetilde{T})<g(n)$ in the preceding cases. For the graph shown in Fig.~\ref{97}, direct computation yields $\phi(\widetilde{T})<g(n)$.

Otherwise, the components of $\widetilde{T}-x_1$ disjoint from $P$ are isomorphic to $K_1$. It follows from the preceding discussion that $X-N[x_1]$ must contain a component with a diameter of at least $3$. Then, $\phi(X-N[x_1])<f(n-d(x_1)-6)$ by Theorem~\ref{S.ChengB.Wu}. Therefore, by Theorems~\ref{S.ChengB.Wu} and~\ref{Wang,Zhang,Tu,Xiong}, we have
{\small\begin{align*}
\phi(\widetilde{T})&=\phi(\widetilde{T},\bar{x}_1\bar{x})+\phi(\widetilde{T},\bar{x}_1x^0)+\phi(\widetilde{T},\bar{x}_1x^1)+\phi(\widetilde{T},{x_1}^0)+\phi(\widetilde{T},{x_1}^1)\\
&\leq
\begin{cases}
g(n-9)+2g(n-9)+3g(n-9)+0+[12f_2(n-10)+4f(n-11)],~~~ x_1~\text{has two leaf neighbors} ,\\
[f_2(n-9)+1]+2g(n-8)+3g(n-8)+0+[8f_2(n-9)+4f(n-10)], ~~x_1~\text{has one leaf neighbor}.\\
\end{cases}\\
&<g(n).
\end{align*}}
~~~~If $d(x_1)=2$, let $N(x_1)=\{x,z_1\}$. We first consider that $d(z_1)\geq3$. Then, each component $S$ of $\widetilde{T}-z_1$ disjoint from $P$ satisfies $\max\{d(u,z_1):u\in V(S)\}\in\{1,3,4,5,6\}.$ If there exists a component $S$ of $\widetilde{T}-z_1$ disjoint from $P$ satisfying $\max\{d(u,z_1):u\in V(S)\}\in\{5,6\},$ the path $P_1=u_1u_2\ldots u_kz_1$ is defined, where $k=5,6$ and $u_i\in V(S)$ for each $i\in [k]$. If there exists $i\in[4,k]$ such that $d(u_i)\neq2$, then by analogy with the preceding proof we have $\phi(\widetilde{T})<g(n)$. Only five graphs remain to be verified; see Fig.~\ref{98},~\ref{99},~\ref{100},~\ref{101} and~\ref{102}. Therefore, by Theorems~\ref{S.ChengB.Wu} and~\ref{Wang,Zhang,Tu,Xiong} and Lemma \ref{zw4}, we have
{\small\begin{align*}
\phi(\widetilde{T})&=\phi(\widetilde{T},\bar{x}\bar{v})+\phi(\widetilde{T},\bar{x}{v}^0)+\phi(\widetilde{T},\bar{x}{v}^1\bar{y})+\phi(\widetilde{T},\bar{x}{v}^1{y}^0)+\phi(\widetilde{T},\bar{x}{v}^1{y}^1)+\phi(\widetilde{T},{x}^0)+\phi(\widetilde{T},{x}^1v)+\phi(\widetilde{T},{x}^1x_1)\\
&\leq
\begin{cases}
6f(n-16)+2[f_2(n-7)+1]+g(n-9)+0+[2g(n-10)+g(n-11)]\\
+2[f_2(n-8)+1]+3g(n-7)+40f_2(n-15),\qquad\qquad \quad\text{if}~\widetilde{T} ~\text{is as shown in Fig.}~\ref{98},\\
4f(n-15)+2[f_2(n-7)+1]+g(n-9)+0+[2g(n-10)+g(n-11)]\\
+2[f_2(n-8)+1]+3g(n-7)+24f_2(n-14), \qquad\qquad \quad\text{if}~\widetilde{T} ~\text{is as shown in Fig.}~\ref{99},\\
4f(n-16)+2[f_2(n-7)+1]+g(n-9)+0+[2g(n-10)+g(n-11)]\\
+2[f_2(n-8)+1]+3g(n-7)+44f_2(n-15), \qquad\qquad \quad\text{if}~\widetilde{T} ~\text{is as shown in Fig.}~\ref{100},\\
4f(n-15)+2[f_2(n-7)+1]+g(n-9)+0+[2g(n-10)+g(n-11)]\\
+2[f_2(n-8)+1]+3g(n-7)+24f_2(n-14), \qquad\qquad \quad\text{if}~\widetilde{T} ~\text{is as shown in Fig.}~\ref{101},\\
4f(n-14)+2[f_2(n-7)+1]+g(n-9)+0+[2g(n-10)+g(n-11)]\\
+2[f_2(n-8)+1]+3g(n-7)+16f_2(n-14), \qquad\qquad \quad\text{if}~\widetilde{T} ~\text{is as shown in Fig.}~\ref{102}.\\
\end{cases}\\
&<g(n).
\end{align*}}
\begin{figure}[htbp]
\begin{minipage}[t]{0.095 \textwidth}
        \includegraphics[width=\linewidth]{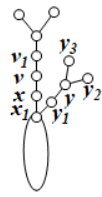}
        \caption{}
        \label{r=0s=0-3}
    \end{minipage}
    \hspace{0.3cm}
  \begin{minipage}[t]{0.07 \textwidth}
        \includegraphics[width=\linewidth]{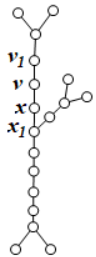}
        \caption{}
        \label{97}
    \end{minipage}
 \hspace{0.3cm}
     \begin{minipage}[t]{0.1 \textwidth}
        \includegraphics[width=\linewidth]{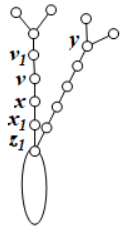}
        \caption{}
        \label{98}
    \end{minipage}
    \hspace{0.3cm}
     \begin{minipage}[t]{0.092 \textwidth}
        \includegraphics[width=\linewidth]{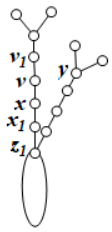}
        \caption{}
        \label{99}
    \end{minipage}
    \hspace{0.3cm}
     \begin{minipage}[t]{0.09 \textwidth}
        \includegraphics[width=\linewidth]{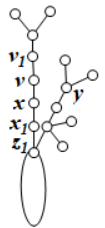}
        \caption{}
        \label{100}
    \end{minipage}
    \hspace{0.3cm}
     \begin{minipage}[t]{0.087 \textwidth}
        \includegraphics[width=\linewidth]{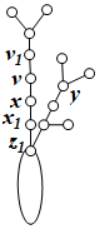}
        \caption{}
        \label{101}
    \end{minipage}
    \hspace{0.3cm}
     \begin{minipage}[t]{0.088 \textwidth}
        \includegraphics[width=\linewidth]{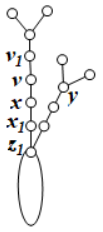}
        \caption{}
        \label{102}
    \end{minipage}
 \hspace{0.3cm}
 \begin{minipage}[t]{0.088 \textwidth}
        \includegraphics[width=\linewidth]{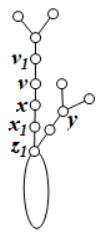}
        \caption{}
        \label{r=0s=0-4}
    \end{minipage}
\end{figure}

 If there exists a component $S$ of $\widetilde{T}-z_1$ disjoint from $P$ with $\max\{d(u,z_1):u\in V(S)\}=3$ (see Fig.~\ref{r=0s=0-4}), then let $Y$ be a component of $X-\{x,x_1,z_1\}\cup N[y]$ with the minimum order. Thus, by Theorems~\ref{S.ChengB.Wu} and~\ref{Wang,Zhang,Tu,Xiong} and Lemma \ref{1}, we have
{\begin{align*}
\phi(\widetilde{T})&=\phi(\widetilde{T},\bar{y})+\phi(\widetilde{T},y^0)+\phi(\widetilde{T},y^1)\\
&\leq
\begin{cases}
g(n-3)+2g(n-4)+10g(n-12), &d(x_2)=3,\\
g(n-3)+2g(n-4)+10f(n-13), &d(x_1)\geq4,~|Y|=1,\\
g(n-3)+2g(n-4)+10g(|Y|)f_2(n-|Y|-12), &d(x_1)\geq4,~|Y|=4,5,\\
g(n-3)+2g(n-4)+10\cdot\frac{15}{16}f_2(n-12), &d(x_1)\geq4,~|Y|\geq6.\\
\end{cases}\\
&<g(n).
\end{align*}}
~~~~Otherwise, the components of $\widetilde{T}-z_1$ disjoint from $P$ are isomorphic to $K_1$. Then, by Theorems~\ref{S.ChengB.Wu} and~\ref{Wang,Zhang,Tu,Xiong} and Lemma \ref{zw4}, we have
{\small\begin{align*}
&\phi(\widetilde{T})\\
=&\phi(\widetilde{T},\bar{z}_1\bar{x}_1)+\phi(\widetilde{T},\bar{z}_1{x_1}^0)+\phi(\widetilde{T},\bar{z}_1{x_1}^1)+\phi(\widetilde{T},{z_1}^0)+\phi(\widetilde{T},{z_1}^1)\\
\leq&
\begin{cases}
3g(n-10)+3g(n-10)+4g(n-10)+0+[16f(n-11)+6f(n-12)], ~~ z_1 ~\text{has two leaf neighbors},\\
3[f_2(n-10)+1]+3g(n-10)+4g(n-10)+0+[10f(n-10)+6f(n-11)], ~~ z_1 ~\text{has one leaf neighbor}.\\
\end{cases}\\
<&g(n).
\end{align*}}
~~~If $d(z_1)=2$, let $N(z_1)=\{x_1,z_2\}$. We first consider that $d(z_2)\geq3$. Then, each component $S$ of $\widetilde{T}-z_2$ disjoint from $P$ satisfies $\max\{d(u,z_2):u\in V(S)\}\in\{1,3,4,5,6,7\}.$ If there exists a component $S$ of $\widetilde{T}-z_2$ disjoint from $P$ satisfying $\max\{d(u,z_2):u\in V(S)\}\in\{5,6,7\},$ the path $P_1=u_1u_2\ldots u_kz_2$ is defined, where $k=5,6,7$ and $u_i\in V(S)$ for each $i\in [k]$. If there exists $i\in[4,k]$ such that $d(u_i)\neq2$, then by analogy with the preceding proof, we have $\phi(\widetilde{T})<g(n)$. Then, only three graphs remain to be verified; see Fig.~\ref{103},~\ref{104} and~\ref{105}. If there exists a component $S$ of $\widetilde{T}-z_2$ disjoint from $P$ with $\max\{d(u,z_2):u\in V(S)\}=4$, then only three graphs remain to be verified; see Fig.~\ref{107},~\ref{108} and~\ref{106}.
\begin{figure}[htbp]
 \centering
\begin{minipage}[t]{0.09 \textwidth}
        \includegraphics[width=\linewidth]{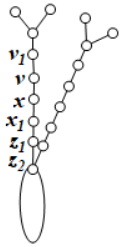}
        \caption{}
        \label{103}
    \end{minipage}
\hspace{0.3cm}
     \begin{minipage}[t]{0.086 \textwidth}
        \includegraphics[width=\linewidth]{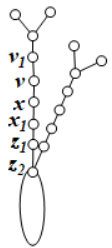}
        \caption{}
        \label{104}
    \end{minipage}
\hspace{0.3cm}
    \begin{minipage}[t]{0.079 \textwidth}
        \includegraphics[width=\linewidth]{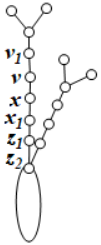}
        \caption{}
        \label{105}
    \end{minipage}
\hspace{0.4cm}
    \begin{minipage}[t]{0.076\textwidth}
        \includegraphics[width=\linewidth]{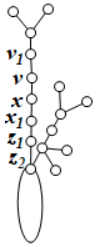}
        \caption{}
        \label{107}
    \end{minipage}
\hspace{0.4cm}
    \begin{minipage}[t]{0.077 \textwidth}
        \includegraphics[width=\linewidth]{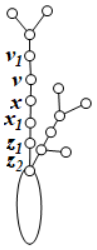}
        \caption{}
        \label{108}
    \end{minipage}
\hspace{0.4cm}
     \begin{minipage}[t]{0.078 \textwidth}
        \includegraphics[width=\linewidth]{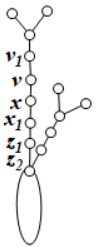}
        \caption{}
        \label{106}
    \end{minipage}
\hspace{0.4cm}
 \begin{minipage}[t]{0.085\textwidth}
        \includegraphics[width=\linewidth]{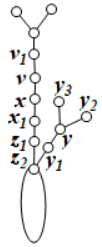}
        \caption{}
        \label{r=0s=0-5}
    \end{minipage}
    \hspace{0.4cm}
     \begin{minipage}[t]{0.048 \textwidth}
        \includegraphics[width=\linewidth]{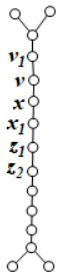}
        \caption{}
        \label{111}
    \end{minipage}
\end{figure}
 Therefore, by Theorems~\ref{S.ChengB.Wu} and~\ref{Wang,Zhang,Tu,Xiong} and Lemma \ref{zw4}, we have
{\small\begin{align*}
\phi(\widetilde{T})&=\phi(\widetilde{T},\bar{x}_1)+\phi(\widetilde{T},{x_1}^0)+\phi(\widetilde{T},{x_1}^1)\\
&=[\phi(\widetilde{T},\bar{x}_1\bar{x})+\phi(\widetilde{T},\bar{x}_1{x}^0)+\phi(\widetilde{T},\bar{x}_1{x}^1)]+\phi(\widetilde{T},{x_1}^0)+[\phi(\widetilde{T},{x_1}^1x)+\phi(\widetilde{T},{x_1}^1x_2)]\\
&\leq
\begin{cases}
\{10f(n-18)+2[f_2(n-8)+1]+3g(n-7)\}+3[f_2(n-9)+1]\\
+[4g(n-8)+52f(n-17)],\qquad\qquad \quad\qquad\qquad\text{if}~\widetilde{T} ~\text{is as shown in Fig.}~\ref{103},\\
\{6f(n-17)+2[f_2(n-8)+1]+3g(n-7)\}+3[f_2(n-9)+1]\\
+[4g(n-8)+40f(n-16)], \qquad\qquad\qquad\qquad \quad\text{if}~\widetilde{T} ~\text{is as shown in Fig.}~\ref{104},\\
\{4f(n-16)+2[f_2(n-8)+1]+3g(n-7)\}+3[f_2(n-9)+1]\\
+[4g(n-8)+24f(n-15)], \qquad\qquad\qquad\qquad \quad\text{if}~\widetilde{T} ~\text{is as shown in Fig.}~\ref{105},\\
\{4f(n-17)+2[f_2(n-8)+1]+3g(n-7)\}+3[f_2(n-9)+1]\\
+[4g(n-8)+44f(n-16)], \qquad\qquad\qquad\qquad \quad\text{if}~\widetilde{T} ~\text{is as shown in Fig.}~\ref{107},\\
\{4f(n-16)+2[f_2(n-8)+1]+3g(n-7)\}+3[f_2(n-9)+1]\\
+[4g(n-8)+24f(n-15)],\qquad\qquad \qquad\qquad \quad\text{if}~\widetilde{T} ~\text{is as shown in Fig.}~\ref{108},\\
\{4f(n-15)+2[f_2(n-8)+1]+3g(n-7)\}+3[f_2(n-9)+1]\\
+[4g(n-8)+16f(n-14)],\qquad\qquad\qquad\qquad \quad\text{if}~\widetilde{T} ~\text{is as shown in Fig.}~\ref{106}.\\
\end{cases}\\
&<g(n).
\end{align*}}
\quad  If there exists a component $S$ of $\widetilde{T}-z_2$ disjoint from $P$ with $\max\{d(u,z_2):u\in V(S)\}=3$ (see Fig.~\ref{r=0s=0-5}), then $n\geq21$. By the induction hypothesis, we have $\phi(\widetilde{T})=\phi(\widetilde{T},\bar{y})+\phi(\widetilde{T},{y}^0)+\phi(\widetilde{T},{y}^1)=g(n-3)+0+[2g(n-4)+13f(n-13)]<g(n)$.

Otherwise, the components of $\widetilde{T}-z_2$ disjoint from $P$ are isomorphic to $K_1$.
Then, by Theorems~\ref{S.ChengB.Wu} and~\ref{Wang,Zhang,Tu,Xiong} and Lemma \ref{zw4}, we have
{\small\begin{align*}
\phi(\widetilde{T})=&
\begin{cases}
\phi(\widetilde{T},\bar{x}_3)+\phi(\widetilde{T},{x_3}^0)+\phi(\widetilde{T},{x_3}^1),  &x_3 ~\text{has two leaf neighbors},\\
\phi(\widetilde{T},\bar{x}_3\bar{x}_2)+\phi(\widetilde{T},\bar{x}_3{x}_2)+\phi(\widetilde{T},{x_3}^0)+\phi(\widetilde{T},{x_3}^1), & x_3 ~\text{has one leaf neighbor}.\\
\end{cases}\\
\leq&
\begin{cases}
13g(n-11)+0+[26f(n-12)+10f(n-13)],  &x_3 ~\text{has two leaf neighbors},\\
4[f_2(n-11)+1]+9g(n-10)+0+[16f_2(n-11)+10f(n-12)], & x_3 ~\text{has one leaf neighbor}.\\
\end{cases}\\
<&g(n).
\end{align*}}
\quad If $d(z_2)=2$, we have  $\phi(\widetilde{T})
=\phi(\widetilde{T},\bar{z}_1\bar{x}_1)+\phi(\widetilde{T},\bar{z}_1{x_1}^0)+\phi(\widetilde{T},\bar{z}_1{x_1}^1)+\phi(\widetilde{T},{z_1}^0)+\phi(\widetilde{T},{z_1}^1x_1)+\phi(\widetilde{T},{z_1}^1z_2).$
Let $N(z_2)=\{z_1,z_3\}$. For $d(z_3)\geq3$, if $X-\{x,x_1,z_1,z_2\}\cong P_3$, then by direct calculations, we have $\phi(\widetilde{T})<g(n).$ If $X-\{x,x_1,z_1,z_2\}\not\cong P_3$, then by the preceding discussion, the tree $X-\{x,x_1,z_1,z_2\}$ has a diameter of at least $8$. 
Therefore, $\phi(\widetilde{T},{z_1}^1z_2)<6f(n-10)$ by Theorem~\ref{S.ChengB.Wu}. Thus, by Theorems~\ref{S.ChengB.Wu} and~\ref{Wang,Zhang,Tu,Xiong} and Lemma \ref{zw4}, we have $\phi(\widetilde{T})\leq3f_2(n-12)+3[f_2(n-9)+1]+4g(n-8)+5[f_2(n-10)+1]+4g(n-9)+6f_2(n-10)<g(n).$

 For $d(z_3)=2$, if $|X-\{x,x_1,z_1\}-N[z_3]|=2$ or $3$, then by direct calculations, we have $\phi(\widetilde{T})<g(n).$ If $|X-\{x,x_1,z_1\}-N[z_3]|\geq4,$ by Theorem~\ref{S.ChengB.Wu}, we have $\phi(\widetilde{T},\bar{z}_1\bar{x}_1)\leq3f(n-11).$ If $\phi(\widetilde{T},\bar{z}_1\bar{x}_1)=3f(n-11),$ then $Y-N[z_3]\cong P_3$ or $K_{1,3}$. By direct calculations, we have $\phi(\widetilde{T})<g(n).$ If $\phi(\widetilde{T},\bar{z}_1{x_1}^1)=4g(n-8),$ then $\widetilde{T}$ is as shown in Fig. \ref{111}. By direct calculations, we have $\phi(\widetilde{T})<g(n).$ 
Therefore, by Theorems~\ref{S.ChengB.Wu} and~\ref{Wang,Zhang,Tu,Xiong} and Lemma \ref{zw4}, we have
$\phi(\widetilde{T})\leq3f_2(n-11)+3[f_2(n-9)+1]+4[g(n-8)-1]+5[f_2(n-10)+1]+4g(n-9)+6g(n-10)<g(n).$

Thus, we complete the proof of Theorem~\ref{main}.\hfill$\square$

\label{app}

\end{document}